\newtheorem{theo}{Theorem}[section]
\newtheorem{prop}[theo]{Proposition}
\newtheorem{lem}[theo]{Lemma}
\newtheorem{cor}{Corollary}[theo]
\newtheorem{rem}[theo]{Remark}
\title{Computation of the Nielsen fixed point number for $2$-valued non-split maps on the Klein bottle}
\author{Bartira Maués\\
	\texttt{Departamento de Matemática - UFSCar}
}
\begin{document}
	\maketitle

	\begin{abstract} In this paper we study $2$-valued non-split maps, focusing on the Klein bottle.
		We establish a connection between a $2$-valued non-split map $\phi:X\multimap Y$ and a pair of classes of maps $([f],[f\circ \delta])\in [\tilde X,Y]\times[\tilde X, Y]$, where $\delta$ is a free involution on $\tilde X$, $X=\tilde X/\delta$ and the class of the lift factor $[f]$ does not satisfy the Borsuk-Ulam Property in respect to $\delta$.
		We also exhibit a method to compute the Nielsen fixed point number of a $2$-valued non-split map on a closed connected manifold in terms of the Nielsen coincidence number between a lift factor and a covering space map, generalizing the formula from only orientable manifolds to also non-orientable manifolds.
		Finally we display a formula for the Nielsen fixed point number of $2$-valued non-split maps on the Klein bottle in terms of two braids of the Klein bottle.
	\end{abstract}

	\thanks{ 2020 {\it Mathematics Subject Classification}. Primary 55M20,  Secondary  20F36. \\ \indent
	{\it  Key words and phrases:} Nielsen number, Multivalued map, Klein bottle, Borsuk-Ulam Property, Braids }

\section{Introduction}

	The Nielsen fixed point number is a topological invariant that provides a way to determine the existence of fixed points of maps on some topological spaces.
	Every self-map $f$ on these spaces has at least $N(f)$ fixed points, where $N(f)$ is the Nielsen fixed point number of $f$, hence we have a lower bound on the number of fixed points, even when finding all fixed points is difficult.
	Efforts have been made to compute a formula for the Nielsen-fixed point number on different spaces.
	For compact closed surfaces with non-negative Euler characteristis an algebraic formula is well-known.

	In 1984 H. Schirmer starts exploring the Nielsen theory regarding $n$-valued maps on a closed manifold (see \cite{schirmer1984fix, schirmer1984index, schirmer1985minimum}) and exhibits a way to compute the Nielsen number of $n$-valued split maps in terms of the Nielsen number of certain single-valued maps.
	Following the works of H. Schirmer, numerous studies focusing on Nielsen theory of $n$-valued maps emerge, particularly in recent years.
	
	An algebraic formula for the Nielsen fixed-point number of $2$-valued split maps on the Torus (\cite{gonccalves2018fixed}) and of $n$-valued split maps on the Klein bottle (\cite{gonccalves2025nielsen}) are established.
	For $2$-valued non-split maps on orientable closed compact manifolds a method to compute the Nielsen number is described in \cite{gonccalves2017fixed}, which can be applied relatively straightforward on the Torus.
	This method holds also for certain $n$-valued non-split maps on orientable closed compact manifolds.
	There are various studys for Nielsen theory on $n$-valued maps on closed compact surfaces, although numerous questions continue to linger for this scenario, particularly for non-split maps on non-orientable surfaces, which remain relatively underexplored. 
	The primary objective of the current work is to study the computation of the Nielsen number of $2$-valued non-split maps on non-orientable closed compact manifolds, particularly on the Klein bottle.
	
	This work is divided in four sections, besides the introduction and the final section, where some open questions and possible next steps are displayed.
	In Section \ref{sec:2valued maps &BU} we exhibit some preliminaries on $2$-valued maps and their lift-factors and, subsequently, prove the following theorem, which connects $2$-valued maps to the Borsuk-Ulam Property of a lift-factor, giving us some restrictions on $2$-valued maps, simplifying the computation of the Nielsen number.\\	
	
	\noindent{\bf Theorem~ \ref{Phi map=>-f_1 BU}.}
		{\it
			Let $\phi:X\multimap Y$ be a $2$-valued non-split map with correspondent map $\Phi:X\rightarrow D_2(Y)$, let $q:\tilde X \rightarrow X$ be the covering space induced by $\Phi_\#^{-1}(P_2(Y))$ and $\hat \Phi=\{f_1,f_2\}:\tilde X\rightarrow F_2(Y)$ the lift of $q\circ\Phi$.
			Let furthermore $\delta$ be the unique non-trivial deck transformation.
			Then $[f_1]$ does not have the Borsuk-Ulam Property in respect to $\delta$ and it holds that $f_{2}=f_{1}\circ \delta$.
		}\\
		
	In Section \ref{sec:Nielsen nr of 2val non-split} we show the following theorem, in which we exhibit a way to compute the Nielsen number of $2$-valued non-split map on closed connected manifold, extending the existing result to non-orientable manifolds as well.\\
	
	\noindent{\bf Theorem~ \ref{N(phi)=N(q,f1)}.}
		{\it
			Let $X$ be a closed connected manifold and let $\phi:X\multimap X$ be a $2$-valued non-split map with covering space $q:\tilde X\rightarrow X$ induced by the subgroup $\Phi^{-1}_\#(P_2(X))$ and let $\hat \Phi=\{f_1, f_2\}$ be a lift of $\Phi\circ q$. Then the Nielsen fixed point number of $\phi$ is equal to the Nielsen coincidence number of the pair $(q, f_1)$, respectively the pair $(q, f_2)$, i.e.
				$N(\phi)=N(q, f_1)=N(q, f_2).$
		}\\
	
	Subsequently we focus on the Klein bottle and give, in Section \ref{braidgroups}, the presentation of the pure and total braid groups on two strands on the Klein bottle, that we use in this paper.
	Finally, in Section \ref{sec:Nielsennumber}, we study $2$-valued non-split maps on the Klein bottle and their lift factors, dividing in two cases, depending whether the induced covering space is the Torus or the Klein bottle
	and prove the following explicit formula for $2$-valued non-split maps on the Klein bottle in terms of two braids.\\
	
	\noindent{\bf Theorem~ \ref{Nielsennumber non-split}.}
		{\it
		Let $\phi:\mathbb K\multimap \mathbb K$ be a $2$-valued non-split map on the Klein bottle $\mathbb K$, where $\pi_1(\mathbb K)=\langle \alpha,\beta:\alpha\beta\alpha=\beta\rangle$ and let $\Phi:\mathbb K\rightarrow D_2(\mathbb K)$ be the correspondent map.
		Let the induced homomorphism be written as
		\begin{align}
				\Phi_\#: 
			\alpha &\longmapsto w_1(a_2,b_2)a_1^{r_1}b_1^{s_1}\sigma^{k_1}\label{Phi nonsplit}\\
			\beta &\longmapsto w_2(a_2,b_2)a_1^{r_2}b_1^{s_2}\sigma^{k_2},\nonumber
		\end{align}
		where  
		$k_1,k_2\in \{0,1\}$, $r_1,r_2,s_1,s_2\in \mathbb N$ and $pr(w_i)=a_2^{m_i}b_2^{n_i}$, for $i=0,1$, where $pr:F_2(a_2,b_2)\rightarrow  F_2(a_2,b_2)/\langle a_2b_2a_2b_2^{-1}\rangle$ is the projection as in Remark \ref{elements in B2(K)}\eqref{pr}.
		Then the Nielsen fixed point number of $\phi$ is given as
		\begin{align*}
			N(\phi)=
			\begin{cases}
				|1-s_2|\cdot max\{|(1+(-1)^{ s_1}) r_1+m_1|,2\}& \text{ if } k_1=1, k_2=0, \\  
				|1-s_2+s_1|\cdot max\{|(1+(-1)^{ s_1}) r_1+m_1|,2\}& \text{ if } k_1=1, k_2=1, \\
				|r_1|\cdot|2s_2+n_2-2|& \text{ if } k_1=0, s_1,n_2 \text{ even and } r_1\neq 0, \\
				|2s_2+n_2-2| & \text{ otherwise.}
			\end{cases}
		\end{align*}}	
	{\bf Acknowledgement:} Some of these results were obtained during the author's Ph.D, in which they received financial support by CAPES and CNPq (S\~ ao Paulo -- Brazil).

\section{Preliminairies on 2-valued maps and a connection to the Borsuk-Ulam Property}\label{sec:2valued maps &BU}
	We follow the path conducted by D. L. Gon\c calves and J. Guaschi, in the works \cite{gonccalves2017fixed} and \cite{gonccalves2018fixed}, where a methodology is formulated to explore Nielsen fixed point theory of $n$-valued maps utilizing braid theory.
	In this paper we assume that every topological space is path-connected, locally path-connected
	and semilocally simply-connected Hausdorff space. 
	Let $X$ and $Y$ be such spaces, then an \textit{$n$-valued map} from $X$ to $Y$, denoted by $\phi:X\multimap Y$, is a multimap, specifically a (continuous) correspondence $\phi$ that assigns for each $x \in X$ a subset of exactly $n$ points $\{y_1, \dots, y_n\} \subseteq Y$.
	We study $2$-valued maps, that is, we set $n=2$ from here on out, but a lot of the following properties can be generalized for $n$-valued maps as well and the results of this section and the next can be generalized for $n$-valued to some degree.
	
	Let us consider the \textit{ordered configuration space} of $Y$, given as $F_2(Y) = \{(y_1, y_2)\in Y^2 : y_1 \neq y_2\}$.
	Next consider the  $2$-th symmetric group $\mathcal{S}_2=\{id,(12)\}$.
	Recall the action of $\mathcal S_2$ on the ordered configuration space $F_2(Y)$ defined by $( \tau, (y_1, y_2)) \mapsto  \tau\cdot (y_1,  y_2)=(y_{ \tau(1)}, y_{\tau(2)})$, for any $\tau\in \mathcal S_2$.
	By taking the quotient of $F_2(Y)$ under this action by $\mathcal{S}_2$, we obtain the \textit{unordered configuration space} $D_2(Y) = F_2(Y)/\mathcal{S}_2$. 
	Note that there exists a natural bijection between the subsets of a topological space $Y$ of cardinality $2$ and the unordered configuration space $D_2(Y)$.
	This yields a bijective correspondence between $2$-valued maps from $X$ to $Y$ and maps from $X$ to $D_2(Y)$.
	From this point forward, we will denote by $\Phi$ the function from $X$ to $D_2(Y)$ that corresponds to the $2$-valued map $\phi: X \multimap Y$. 
	This correspondence preserves continuity under relatively mild conditions imposed on $X$ (see \cite[Corollary 4.1]{brown2018topology}), and by extension, homotopy as well.
	It is generally known that $\pi_1(D_2(Y))$ is isomorphic to the braid group $B_2(Y)$ and $\pi_1(F_2(Y))$ to the pure braid group $P_2(Y)$.
	
	We say that a $2$-valued map $\phi:X \multimap Y$ \textit{splits} if there exist (continuous) functions $f_1, f_2: X \rightarrow Y$ such that $\phi(x) = \{f_1(x), f_2(x)\}$ holds for all $x \in X$, and we denote this as \textit{$\phi = \{f_1, f_2\}$}.
	According to \cite[page 8]{gonccalves2018fixed}, a $2$-valued map $\phi:X \multimap Y$ splits if and only if the function $\Phi$ possesses a \textit{lift}, meaning that there exists a map $\hat{\Phi}=\{f_1, f_2\}: X \rightarrow F_2(Y)$ such that $\Phi=\pi \circ \hat \Phi$, which exists if and only if, $\Phi(\pi_1(X))\subseteq P_2(Y)$ (see \cite[Proposition 1.33.]{hatcher2002algebraic}).
	We call the single-valued maps $f_1, f_2$ the \textit{lift factors of $\Phi$, respectively $\phi$}.
	
	For non-split maps we follow the approach of R. Brown and D. Gon\c calves in \cite{brown2023lift}.
	Let $\phi:X\multimap Y$ be a $2$-valued non-split map and its correspondent map $\Phi:X\rightarrow D_2(Y)$, then $\Phi(\pi_1(X))\subsetneqq P_2(Y)$ and consequently the group $\Phi_\#^{-1}(P_2(Y))$ is a proper subgroup of $\pi_1(X)$.
	By \cite[Proposition 1.36]{hatcher2002algebraic} there is a $2$-sheeted covering space $q:\tilde X\rightarrow X$ such that $q_\#(\pi_1(\tilde X))=\Phi_\#^{-1}(P_2(Y))$
	and since $(\Phi\circ q)_\#(\pi_1(\tilde X))\subseteq P_2(Y)$, there is a lift $\hat \Phi=\{f_1,f_2\}:\tilde X\rightarrow F_2(Y)$ of $\Phi\circ q$.
	\begin{multicols}{2}
		\noindent
		\begin{equation*}
			\begin{tikzcd}[column sep=huge]
				&&F_2(Y)\arrow{d}{\pi}\\
				\tilde X \arrow{r}[swap]{q}\arrow[dashed]{urr}{\hat \Phi=\{f_1, f_2\}} &X\arrow{r} [swap]{\Phi}&D_2(Y).
			\end{tikzcd}
		\end{equation*}
		
		\noindent
		\begin{equation*}
			\begin{tikzcd}[column sep=huge]
				\pi_1(\tilde X) \arrow[dashed]{r}{\hat\Phi_\#}\arrow{d}[swap]{q_\#}&P_2(X)\arrow{d}{\pi_\#}\\
				\pi_1(X) \arrow{r}[swap]{\Phi_\#}&B_2(X)
			\end{tikzcd}
		\end{equation*}
	\end{multicols}

	For the next step we explore $f_1$ and $f_2$, the so-called \textit{lift factors of $\Phi$, respectively $\phi$} and analyze them with the Borsuk-Ulam Property in mind. 
	To recall the definition of a generalization of the Borsuk-Ulam Property, let $\tilde X$ and $Y$ be topological spaces and let $\delta:\tilde X\rightarrow \tilde X$ be a free involution, i.e. $\delta$ is fixed point free and $\delta^2=id$.
	We say that a homotopy class $[f_0]\in [\tilde X,Y]$ has the \textit{Borsuk-Ulam Property with respect to $\delta$} if, for every $f\in [f_0]$, there exists an $\tilde x\in \tilde X$ such that $f(\tilde x)=(f\circ\delta)(\tilde x)$.
	
	\begin{rem}\label{fBU->2valued non split map}
		Observe that if $[f]\in [\tilde X,Y]$ is a homotopy class that does not have the Borsuk-Ulam Property in respect to a free involution $\delta:\tilde X\rightarrow \tilde X$,		
		then there is a map $f_1\in [f]$ such that $f_1$ and $f_1\circ \delta$ do not coincide.
		Consider the map $\hat \Phi=(f_1,f_1\circ\delta):\tilde X\rightarrow F_2(Y)$ and the quotient space $X=\tilde X/ \delta$, which together with the projection $p:\tilde X\rightarrow X$ is a (normal) covering space.
		Then $\hat \Phi$ induces a map $\Phi:X\rightarrow D_2(Y)$ on the quotient spaces and it holds that $\hat\Phi$ is a lift of $p\circ\Phi$.
		Let $\phi:X\multimap Y$ be the correspondent $2$-valued non-split map, then we found a $2$-valued non-split map where a lift factor is homotopic to $f$.
	\end{rem}
	The next theorem is the converse of this statement and was shown in \cite[Proposition 2.2.5.]{maues2020nielsen}.
	\begin{theo}\label{Phi map=>-f_1 BU}\label{fsigmak=fdeltak}
		Let $\phi:X\multimap Y$ be a $2$-valued non-split map with correspondent map $\Phi:X\rightarrow D_2(Y)$, let $q:\tilde X \rightarrow X$ be the covering space induced by $\Phi_\#^{-1}(P_2(Y))$ and $\hat \Phi:\tilde X\rightarrow F_2(Y)$ a lift of $q\circ\Phi$.
		Let furthermore $\delta$ be the unique non-trivial deck transformation.
		Then $[f_1]$ does not have the Borsuk-Ulam Property in respect to $\delta$ and it holds that $f_{2}=f_{1}\circ \delta$.
	\end{theo}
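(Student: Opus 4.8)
The plan is to play the two coverings in sight against each other: the $2$-sheeted cover $q:\tilde X\to X$, with its unique non-trivial deck transformation $\delta$, and the double cover $\pi:F_2(Y)\to D_2(Y)$, which is regular with deck group $\mathcal S_2=\{\mathrm{id},\tau\}$ acting by $\tau\cdot(y_1,y_2)=(y_2,y_1)$ (the swap action being free because the coordinates of a point of $F_2(Y)$ are distinct). Since $\delta$ is a deck transformation of $q$ we have $q\circ\delta=q$, whence
\[
\pi\circ(\hat\Phi\circ\delta)=(\pi\circ\hat\Phi)\circ\delta=(\Phi\circ q)\circ\delta=\Phi\circ q .
\]
Thus $\hat\Phi$ and $\hat\Phi\circ\delta$ are two lifts of the same map $\Phi\circ q$ through $\pi$, and $\tilde X$ is connected, being the cover associated to the subgroup $\Phi_\#^{-1}(P_2(Y))$.

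Next I would compare these two lifts. For each $\tilde x$ the points $\hat\Phi(\tilde x)$ and $\hat\Phi(\delta(\tilde x))$ lie in the same fibre of $\pi$, a two-point set interchanged by $\tau$; as $\tau$ is fixed-point free, the loci $\{\hat\Phi\circ\delta=\hat\Phi\}$ and $\{\hat\Phi\circ\delta=\tau\circ\hat\Phi\}$ are disjoint, open and closed, so by connectedness one of them equals all of $\tilde X$. If $\hat\Phi\circ\delta=\hat\Phi$, then $\hat\Phi$ is $\delta$-invariant and descends along $q$ to a map $\bar\Phi:X\to F_2(Y)$ with $\pi\circ\bar\Phi=\Phi$; this would be a lift of $\Phi$, forcing $\phi$ to split and contradicting the hypothesis. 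Hence $\hat\Phi\circ\delta=\tau\circ\hat\Phi$, which on writing $\hat\Phi=(f_1,f_2)$ reads $(f_1\circ\delta,f_2\circ\delta)=(f_2,f_1)$; in particular $f_2=f_1\circ\delta$, and the remaining identity $f_1=f_2\circ\delta$ is then automatic from $\delta^2=\mathrm{id}$.

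The Borsuk-Ulam statement is now immediate: since $\hat\Phi=(f_1,f_1\circ\delta)$ takes values in the ordered configuration space $F_2(Y)$, its coordinates are distinct everywhere, i.e. $f_1(\tilde x)\neq(f_1\circ\delta)(\tilde x)$ for all $\tilde x\in\tilde X$. Thus the representative $f_1\in[f_1]$ has no coincidence with $f_1\circ\delta$, which is precisely the failure of the Borsuk-Ulam Property for $[f_1]$ with respect to $\delta$. I expect the only genuinely substantive point to be the dichotomy-and-descent step: identifying $\hat\Phi$ and $\hat\Phi\circ\delta$ as lifts is formal, but the crux is invoking non-splitting to exclude $\hat\Phi\circ\delta=\hat\Phi$, since it is exactly the $\delta$-invariance of $\hat\Phi$ that contradicts it; the freeness of the $\tau$-action and the connectedness of $\tilde X$ are what make the clopen-partition argument legitimate.
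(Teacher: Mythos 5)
Your proof is correct, and its final two steps coincide with the paper's: from the equivariance identity $\hat\Phi\circ\delta=\tau\circ\hat\Phi$ one reads off $f_2=f_1\circ\delta$, and then the fact that $\hat\Phi$ lands in $F_2(Y)$ forces $f_1(\tilde x)\neq (f_1\circ\delta)(\tilde x)$ for all $\tilde x$, so $[f_1]$ fails the Borsuk--Ulam Property. The genuine difference is in how that equivariance identity is obtained. The paper simply declares ``by construction $\hat\Phi$ is equivariant'' and draws the commutative square, even though $\hat\Phi$ was only produced abstractly from the lifting criterion (as some lift of $\Phi\circ q$), so equivariance is not literally part of its construction. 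You instead prove it: $\hat\Phi$ and $\hat\Phi\circ\delta$ are two lifts of $\Phi\circ q$ through the double cover $\pi$, the freeness of the swap action and the connectedness of $\tilde X$ give the clopen dichotomy $\hat\Phi\circ\delta=\hat\Phi$ or $\hat\Phi\circ\delta=\tau\circ\hat\Phi$, and the invariant case is excluded because a $\delta$-invariant $\hat\Phi$ would descend to a lift $\bar\Phi:X\to F_2(Y)$ of $\Phi$, making $\phi$ split. This buys two things the paper's text does not supply: a self-contained justification of the one step on which the whole theorem rests, and an explicit identification of where the non-splitting hypothesis enters (it is used precisely, and only, to kill the invariant branch of the dichotomy). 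The paper's version is shorter, but only because it treats as definitional a property that in its own setup requires exactly the argument you gave.
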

	\begin{proof}
		By construction $\hat \Phi$ is equivariant, i.e. the following diagram commutes
		\begin{equation*}
			\begin{tikzcd}[column sep=huge]
				\tilde X \arrow{d}[swap]{\delta}\arrow{r}{\hat \Phi=\{f_1,f_2\}}&F_2(Y)\arrow{d}{(12)}\\
				\tilde X \arrow{r}[swap]{\hat \Phi=\{f_1,f_2\}} & F_2(Y).
			\end{tikzcd}
		\end{equation*}
		Therefore it holds, for every $\tilde x\in \tilde X$, that
		\begin{equation*}
			(f_1(\delta(\tilde x)), f_2(\delta(\tilde x)))=(\hat\Phi\circ\delta)(\tilde x)=(12) \cdot \hat \Phi(\tilde x)= (12)\cdot(f_1(\tilde x),f_2(\tilde x))=(f_2(\tilde x),f_1(\tilde x)).
		\end{equation*}		
		It follows that	$f_2(\tilde x)=(f_1\circ\delta)(\tilde x)$, for every $\tilde x\in \tilde X$ and consequently $f_2=f_1\circ \delta$.
		Note that $\delta$, the unique non-trivial deck transformation $q:\tilde X\rightarrow X$ is a free involution.
		Since $\hat \Phi=\{f_1, f_2\}$ is a map, $f_1$ and $f_2=f_1\circ \delta$ have no coincidence points, i.e. $f_1(\tilde x)\neq f_1\circ \delta(\tilde x)$, for any $\tilde x\in \tilde X$.
		Therefore $[f_1]$ does not have the Borsuk-Ulam Property in respect to $\delta$.
	\end{proof}

\section{Nielsen number for 2-valued non-split maps on connected closed manifolds}\label{sec:Nielsen nr of 2val non-split}
	To compute the Nielsen number of a $2$-valued non-split map $\phi$ on a connected, closed orientable manifold $X$ we can use the formula, given in \cite[Corollary 23]{gonccalves2017fixed}.
	To generalize for connected, closed manifolds and therefore include the non-orientable manifolds,
	we look for maps that "preserve" the local orientation on the non-orientable manifolds, the so called orientation-true maps.
	Let $\tilde X$ and $X$ be connected manifolds, we say a map $g:\tilde X\rightarrow X$ is orientation-true, if given any loop $\gamma:I\rightarrow \tilde X$ the loop $g\circ \gamma:I\rightarrow X$ preserves (reverses) orientation if and only if $\gamma$ preserves (reverses) orientation (see \cite[page 406]{brown2005handbook}).
	As an example consider the Möbius strip $M$ and a map $f$ on $M$, then $f$ is orientation-true if it takes loops that traverse
	a Möbius strip once to loops that traverse a Möbius strip once and loops that don't to loops that don't.
	Consider two maps $f, g: \tilde X\rightarrow X$ with $g$ orientation-true, then the coincidence index for the pair $(g,f)$ of an isolated coincidence point is an integer defined as the local index of the pair $(g, f)$ (see \cite[Definition 5.1.]{gonccalves1997lefschetz}).
	The map $g$ being orientation-true guarantees that there is a consistent global choice of local orientations.
	Note that every covering map $q:\tilde X\rightarrow X$ is orientation-true, see for example \cite[Proposi\c c\~ao 1.1.5]{taneda2007teoria}.
		
	In the next proposition we show and use, that there is a bijection between $Fix(\phi)$, the set of fixed points of a $2$-valued non-split map $\phi$, and $Coin(q,f_i)$, the set of Coincidence points of the covering map $q$ and a lift factor $f_i$ of $\phi$.
	Furthermore this bijection takes Nielsen coincidence classes to Nielsen fixed point classes, which allows us to compare the Nielsen fixed point number of $\phi$ with the Nielsen coincidence number of the pair $(q,f_i)$.
	
	\begin{theo}\label{N(phi)=N(q,f1)}
		Let $X$ be a closed connected manifold and let $\phi:X\multimap X$ be a $2$-valued non-split map with covering space $q:\tilde X\rightarrow X$ induced by the subgroup $\Phi^{-1}_\#(P_2(X))$ and let $\hat \Phi=\{f_1, f_2\}$ be a lift of $\Phi\circ q$. Then the Nielsen fixed point number of $\phi$ is equal to the Nielsen coincidence number of the pair $(q, f_1)$, respectively the pair $(q, f_2)$, i.e.
		$N(\phi)=N(q, f_1)=N(q, f_2).$
	\end{theo}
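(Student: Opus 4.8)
The plan is to prove the theorem in three stages: first produce an explicit bijection $Coin(q,f_1)\to Fix(\phi)$, then show that it carries Nielsen coincidence classes onto Nielsen fixed point classes, and finally that it preserves essentiality, so that counting essential classes on the two sides agrees. Since the whole argument is symmetric in $f_1$ and $f_2$, it will simultaneously give $N(\phi)=N(q,f_1)$ and $N(\phi)=N(q,f_2)$.

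First I would set up the point-set bijection. By Theorem~\ref{Phi map=>-f_1 BU} we have $f_2=f_1\circ\delta$, where $\delta$ is the free involution generating the deck group of $q$, and $f_1,f_2$ have no coincidence. For $\tilde x\in\tilde X$ one has $\phi(q(\tilde x))=\{f_1(\tilde x),f_2(\tilde x)\}$, so if $q(\tilde x)=f_1(\tilde x)$ then $q(\tilde x)\in\phi(q(\tilde x))$, showing that $q$ maps $Coin(q,f_1)$ into $Fix(\phi)$. Conversely, given $x\in Fix(\phi)$ with $q^{-1}(x)=\{\tilde x,\delta(\tilde x)\}$, the two distinct values $\{f_1(\tilde x),f_2(\tilde x)\}=\phi(x)$ contain $x$, and since $f_1(\delta(\tilde x))=f_2(\tilde x)$, exactly one of $\tilde x,\delta(\tilde x)$ is a coincidence of $(q,f_1)$; here I use that the two values are distinct to rule out the other. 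Hence $q|_{Coin(q,f_1)}$ is a bijection onto $Fix(\phi)$, and the identical argument applies to $f_2$.

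Next, the Nielsen class correspondence, where the key point is that a selection path along $\phi$ lifts uniquely to a single branch. If $\tilde c$ is a path in $\tilde X$ realizing coincidence equivalence of its endpoints, i.e. $q\circ\tilde c\simeq f_1\circ\tilde c$ rel endpoints, then $c:=q\circ\tilde c$ joins the associated fixed points and $\gamma:=f_1\circ\tilde c$ is a selection with $\gamma(t)\in\phi(c(t))$ homotopic to $c$ rel endpoints, giving Nielsen equivalence of the fixed points in the sense of Schirmer (see \cite{schirmer1984fix}). For the converse I would lift $c$ to a path $\tilde c$ starting at the coincidence point over $x_0$; the selection $\gamma$ takes values in $\{f_1(\tilde c(t)),f_2(\tilde c(t))\}$, and because $f_1$ and $f_2$ never coincide this continuous choice cannot switch branches, so $\gamma=f_1\circ\tilde c$ throughout, forcing $\tilde c$ to terminate at the coincidence point over $x_1$ and exhibiting coincidence equivalence. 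Thus the bijection identifies the two families of Nielsen classes.

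Finally, index and essentiality, which is where I expect the main obstacle. Since $q$ is a local homeomorphism, near $\tilde x\in Coin(q,f_1)$ I would pick a local inverse $s$ of $q$, so that coincidences of $(q,f_1)$ near $\tilde x$ correspond to fixed points of the single-valued local branch $h=f_1\circ s$, which is a local selection of $\phi$; the coincidence index of $(q,f_1)$ at $\tilde x$ then equals the fixed point index of $h$, which is by definition the local index of $x=q(\tilde x)$ as a fixed point of $\phi$. Summing over a class, the index of a coincidence class equals that of its image fixed point class, so the two are essential together, giving $N(\phi)=N(q,f_1)=N(q,f_2)$. The delicate point is the well-definedness of the \emph{integer} coincidence index on the possibly non-orientable manifold $X$: this is precisely where the orientation-trueness of the covering map $q$ is needed, since it supplies the consistent choice of local orientations that makes the local indices combine coherently and extends the orientable computation of \cite{gonccalves2017fixed} to the non-orientable setting.
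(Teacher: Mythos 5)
Your proposal is correct and follows essentially the same three-step route as the paper's proof: the fiberwise bijection $Coin(q,f_i)\to Fix(\phi)$ obtained from $f_2=f_1\circ\delta$ and the coincidence-freeness of $f_1,f_2$, the identification of Nielsen classes under this bijection, and the local index comparison via a local inverse of $q$, with orientation-trueness of $q$ supplying the consistent local orientations needed on a non-orientable manifold. The only differences are cosmetic: where you prove the class correspondence directly by the branch-switching argument (a selection path cannot jump between the never-coinciding branches $f_1\circ\tilde c$ and $f_2\circ\tilde c$), the paper simply cites Lemma 18 of \cite{gonccalves2017fixed}, and where you construct the local branch $h=f_1\circ s$ by hand, the paper invokes Schirmer's Splitting Lemma and then identifies the resulting branches as $g_i=f_i\circ(q|_{\tilde U})^{-1}$.
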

	\begin{proof}
		We start by showing that the covering map $q:\tilde X\rightarrow X$ induces a bijection $q_{|Coin_i}$ between $Coin(q, f_i)$ and $Fix(\phi)$, for both $i=1$ and $i=2$.
		This map is well-defined, since for any $\tilde x\in Coin(q, f_i)$ we have that $q(\tilde x)=f_i(\tilde x)$ is a fixed point of $\phi$.
		To show that it is a bijection, let $x_0$ be a fixed point of $\phi$, i.e. $x_0\in Fix(\phi)$.
		Since $q:\tilde X\rightarrow X$ is a $2$-sheeted covering, the fiber has exactly two (distinct) points, i.e. $q^{-1}(x_0)=\{\tilde x_1,\tilde x_2\}$.
		Then $x_0$ is one of the two points of the set $\Phi(x_0)=(\Phi\circ q)(\tilde x_1)$.
		Since $\hat \Phi$ is the lift of $\Phi\circ q$ it follows that $x_0\in \hat \Phi(\tilde x_1)=\{f_1(\tilde x_1), f_2(\tilde x_1)\}$.
		Therefore either $f_1(\tilde x_1)=x_0$, or $f_2(\tilde x_1)=x_0$ and since $q(\tilde x_1)=x_0$, it follows that either $\tilde x_1\in Coin(q,f_1)$ or $\tilde x_1\in Coin(q,f_2)$.
		Consider the unique non-trivial deck transformation $\delta:\tilde X\rightarrow \tilde X$.
		Since $\tilde x_1$ and $\tilde x_2$ are the only two points in the fiber of $x_0$ it holds that $\delta$ takes $\tilde x_1$ to $\tilde x_2$ and vice-versa.
		Therefore, if $\tilde x_1\in Coin(q,f_1)$, i.e. $f_1(\tilde x_1)=x_0$, then $f_2(\tilde x_2)=x_0$, i.e. $\tilde x_2\in Coin(q,f_2)$.
		Analogously, if $\tilde x_2\in Coin(q,f_1)$, then $\tilde x_1\in Coin(q,f_2)$.
		Therefore $Coin(q, f_1)\cap q^{-1}(x_0)$ and $Coin(q, f_2)\cap q^{-1}(x_0)$ are disjoint sets, both containing exactly one point and their union is $\{\tilde x_1, \tilde x_2\}$.
		Hence $q$ restricts to a bijection $q_{|Coin_i}:Coin(q, f_i)\rightarrow Fix(\phi)$ for both $i=1$ and $i=2$.
		
		This bijection $q_{|Coin_1}$ (respectively $q_{|Coin_2}$) maps, by Lemma 18 of \cite{gonccalves2017fixed}, a Nielsen coincidence class $[\tilde x_1]$ of the pair $(q,f_1)$ (respectively of the pair $(q,f_2)$) bijectively to a Nielsen fixed point class $[q(\tilde x_1)]$ of $\phi$.
		We show next that the coincidence index of such a class $[\tilde x_1]$ for the pair $(q,f_i)$ is equal to the Nielsen fixed point index of the class $[q(\tilde x_1)]$.
		For that reason, let $i\in \{1,2\}$, let $x_0\in Fix(\phi)$ be an isolated fixed point of $\phi$ and let $\{\tilde x_1\}= q^{-1}(x)\cap Coin(q, f_i)$.
		Then there is a contractible neighborhood $U\subset X$ of $x_0$ such that $Fix(\phi)\cap U=\{x_0\}$ and $q^{-1}(U)=\tilde U \cup \tilde V$, where $\tilde U$ and $\tilde V$ are disjoint open subsets of $\tilde X$ homeomorphic to $U$ by $q$.
		Suppose without loss of generality that $\tilde x_1\in \tilde U$.
		The neighborhood $U$ satisfies the conditions of the Splitting Lemma \cite[Lemma 2.1.]{schirmer1984index} and therefore
		there exist two maps $g_1,g_2:U\rightarrow X$ such that $\phi|_U=\{g_1,g_2\}$.
		Since $\hat \Phi=\{ f_1, f_2\}:\tilde X\rightarrow F_2(X)$ is the lifting of $\Phi\circ q:\tilde X\rightarrow F_2(X)$ we have for every $y\in U$ that
		$\{ f_1(y), f_2(y)\}=\{(g_1\circ q)(y),(g_2\circ q)(y)\}.$
		Suppose without loss of generality that $( f_i)|_{\tilde U}=g_i\circ q_{|\tilde U}$.
		As $q$ restricts to a homeomorphism to $\tilde U$ it holds that $g_i= f_i\circ (q_{|\tilde U})^{-1}$.
		Since $\tilde x_1\in q^{-1}(x)\cap Coin(q, f_i)$ it follows that $x \in Fix(g_i)$.
		By definition of the fixed point index 
		\begin{equation*}
			ind(\phi, U)=ind(g_i,U)=ind(f_i\circ (q|_{\tilde U})^{-1}, U),
		\end{equation*}
		which is exactly the local coincidence index of the pair $(q, f_i)$ in $\tilde U$, where the local orientation in $\tilde U$ is determined by the local homeomorphism $q$.
		Since $q: \tilde X\rightarrow X$ is a covering map and therefore orientation-true (see for example \cite[Proposi\c c\~ao 1.1.5]{taneda2007teoria}), the local index extends globally, that is the index of the class is the sum of the indices of all coincidence points contained in the class.
		Therefore the essential coincidence classes of the pair $(q, f_i)$ correspond to the essential fixed point classes of $\phi$, implying that the Nielsen coincidence number $N(q, f_i)$
		and the Nielsen number $N(\phi)$ are equal.
	\end{proof}
	
	We expect it to be possible to give a formula for the Nielsen number of an $n$-valued map $\phi$, where
	$n$ is greater than $2$, in terms of Nielsen coincidence Theory, generalizing Theorem 6 of \cite{gonccalves2017fixed}.


\section{The braid group on 2 strands on the Klein bottle}\label{braidgroups}
	In this section we give a presentation of the pure and total braid groups on $2$ strands of the Klein bottle, which we use in the remainder of this paper.
	To start studying the pure braid groups consider a presentation given by  J. Guaschi and C. M. e Pereiro, where we already set $n=2$.
	
	\begin{prop}\cite[Theorem 2.1]{guaschi2020lower} \label{presentacao Puras Carolina(prop)}
		The following is a presentation of the pure braid group $P_2(\mathbb K)$ of the Klein bottle $\mathbb K$, where the 
		generating set is $\{\hat a_1,\hat b_1, \hat a_2, \hat b_2, \hat c_{1,2}\}$ and the relations are as follows;
		\begin{multicols}{2}
			
			\noindent
			\begin{enumerate}
				\item $\hat a_1\hat a_2\hat a_1^{-1}=\hat a_2$;
				\item $\hat a_1^{-1}\hat b_2\hat a_1=\hat b_2\hat a_2\hat c_{1,2}^{-1}\hat a_2^{-1}$;
				\item $\hat a_1^{-1}\hat c_{1,2}\hat a_1=\hat a_2 \hat c_{1,2}\hat a_2^{-1}$;
				\item does not exist for $n=2$;
				
				\item $\hat c_{1,2}=\hat b_1 \hat a_1^{-1}\hat b_1^{-1}\hat a_1^{-1}=\hat b_2^{-1}\hat a_2\hat b_2\hat a_2$;
				\item $\hat b_1^{-1}\hat b_2\hat b_1=\hat b_2 \hat c_{1,2}$;
				\item $\hat b_1^{-1}\hat a_2\hat b_1=\hat a_2\hat b_2\hat c_{1,2}^{-1}\hat b_2^{-1}$;
				\item $\hat b_1^{-1}\hat c_{1,2}\hat b_1=\hat b_2 c_{1,2}^{-1}\hat b_2^{-1}.$
			\end{enumerate}
		\end{multicols}
	\end{prop}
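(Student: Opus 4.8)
The plan is to derive the presentation from the Fadell--Neuwirth fibration of the Klein bottle together with the standard presentation of a group extension. Forgetting the first of the two points gives a locally trivial fibration $\mathbb K\setminus\{x_1\}\hookrightarrow F_2(\mathbb K)\xrightarrow{\,p\,}\mathbb K$ whose fiber over $x_1$ is the once-punctured Klein bottle. Since $\mathbb K$ is aspherical, the configuration space $F_2(\mathbb K)$ and the fiber are aspherical as well, so the long exact sequence of homotopy groups collapses to the short exact sequence
\begin{equation*}
	1\longrightarrow \pi_1(\mathbb K\setminus\{x_1\})\longrightarrow P_2(\mathbb K)\xrightarrow{\ p_\#\ }\pi_1(\mathbb K)\longrightarrow 1,
\end{equation*}
where the fiber group is free of rank two, $\pi_1(\mathbb K\setminus\{x_1\})=F(\hat a_2,\hat b_2)$, and the quotient is $\pi_1(\mathbb K)=\langle \alpha,\beta:\alpha\beta\alpha\beta^{-1}\rangle$. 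I would fix lifts $\hat a_1,\hat b_1\in P_2(\mathbb K)$ of $\alpha,\beta$, realized by moving the first strand along the standard generating loops of $\mathbb K$ while carrying the second strand along a chosen parallel path.

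With these choices, the general presentation of an extension (see e.g.\ \cite{hatcher2002algebraic}) produces generators $\hat a_2,\hat b_2$ for the free fiber together with the lifts $\hat a_1,\hat b_1$, and three families of relations: (a) the relations of the fiber, of which there are none since it is free; (b) for each $\hat x_1\in\{\hat a_1^{\pm1},\hat b_1^{\pm1}\}$ and each $\hat y_2\in\{\hat a_2,\hat b_2\}$ a conjugation relation $\hat x_1\hat y_2\hat x_1^{-1}=w(\hat a_2,\hat b_2)$ recording the monodromy of the fibration; and (c) a single relation asserting that the lift of the defining relator of $\pi_1(\mathbb K)$ lands in the fiber, where it equals (up to inversion) the class of the loop encircling the puncture. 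I would then introduce $\hat c_{1,2}$ by a Tietze transformation as a redundant generator equal to that puncture loop. Read in the fiber, the puncture loop is the surface relator, which after basing gives $\hat c_{1,2}=\hat b_2^{-1}\hat a_2\hat b_2\hat a_2$; read in the first-strand description it is the inverse of the lifted relator, $\hat c_{1,2}=\hat b_1\hat a_1^{-1}\hat b_1^{-1}\hat a_1^{-1}=(\hat a_1\hat b_1\hat a_1\hat b_1^{-1})^{-1}$. These two expressions are precisely relation (5).

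The heart of the argument is the explicit computation of the monodromy, i.e.\ the right-hand sides of the conjugation relations. I would compute these geometrically, by pushing a representative loop of $\hat a_2$ or $\hat b_2$ around the loop traced by the first strand and recording how it is dragged across the moving puncture: relation (1) reflects that the $\alpha$-loop of the first strand can be isotoped off the $\hat a_2$-loop of the second, while relations (2), (3), (6), (7), (8) encode the nontrivial dragging, the correction terms involving $\hat c_{1,2}$ arising exactly when a loop must be pushed across the puncture. The main obstacle is carrying out this bookkeeping correctly: the non-orientability of $\mathbb K$ makes the loop $\beta$ orientation-reversing, which is reflected in the inversions of $\hat c_{1,2}$ and the conjugated fiber words appearing in (6)--(8), and sign errors here are easy to make. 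Once the action is pinned down it remains to check, via Tietze moves eliminating $\hat c_{1,2}$ through (5), that relations (1)--(8) are equivalent to the extension presentation above, and that no further relations occur; the latter is guaranteed by the exactness of the sequence together with the freeness of the fiber, completing the derivation.
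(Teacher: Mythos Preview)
The paper does not prove this proposition: it is quoted verbatim from \cite{guaschi2020lower} as a known result and used without argument. So there is no ``paper's own proof'' to compare against.

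That said, your outline is the standard route to such presentations and is essentially what the cited source does: apply the Fadell--Neuwirth fibration $F_2(\mathbb K)\to\mathbb K$ with fiber $\mathbb K\setminus\{x_1\}$, use asphericity to get the short exact sequence $1\to F(\hat a_2,\hat b_2)\to P_2(\mathbb K)\to\pi_1(\mathbb K)\to 1$, and then write down the extension presentation with the puncture class $\hat c_{1,2}$ introduced as a redundant generator. Your identification of relation (5) as simultaneously the lifted surface relator and the boundary of the puncture is correct, and the general shape of (1)--(3), (6)--(8) as monodromy relations is right.

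The only genuine gap is that the content of the proposition lives entirely in the explicit words on the right-hand sides of those conjugation relations, and you have not computed any of them---you have only asserted that they arise from dragging loops across the moving puncture and that the non-orientability produces the inversions of $\hat c_{1,2}$. That is true as far as it goes, but without actually carrying out at least one of these computations (say relation (7) or (8), where the orientation-reversing behaviour of $\beta$ is visible) the proposal remains a sketch of the method rather than a proof. If you intend this as a derivation rather than a citation, you need to exhibit the geometric or combinatorial computation for each of the six nontrivial monodromy words.
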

	
	Note that we can write the generator $\hat c_{1,2}$ as a product of the remaining generators and can therefore remove it from the generating list.
	The pure braid group $P_2(\mathbb K)$ is isomorphic to $F_2(a_2,b_2)\rtimes_\theta\pi_1(\mathbb K)$ a semi-direct product, described in the following proposition.
	
	\begin{prop}	\cite[Lemma 3.6 and Corollary 3.7.1]{gonccalves2025nielsen}\label{P_2=produto semidireto e presentacao F2 |x pi1K}
		The pure braid group $P_2(\mathbb K)$ is isomorphic to $F_2(a_2,b_2)\rtimes_{\theta}\pi_1(\mathbb K)$, with $\pi_1(\mathbb K)=\langle a_1,b_1:a_1b_1a_1=b_1\rangle$ and action $\theta: \pi_1(\mathbb K)\rightarrow Aut(F_2(a_2,b_2))$, that maps an element of the fundamental group of the Klein bottle $a_1^rb_1^{2s}$, respectively $a_1^rb_1^{2s+1}$ to
		\begin{multicols}{2}
			\noindent
			\begin{align*}
				\theta_{a_1^rb_1^{2s}}:
				a_2&\longmapsto a_2,\\
				b_2&\longmapsto a_2^{2r}b_2.
			\end{align*}
			
			\noindent
			\begin{align*}
				\theta_{a_1^rb_1^{2s+1}}:
				a_2&\longmapsto a_2^{-1},\\
				b_2&\longmapsto a_2^{2r+1}b_2a_2^.
			\end{align*}
		\end{multicols}
		\noindent
		The isomorphism $\Theta_P:F_2(a_2,b_2)\rtimes_{\theta}\pi_1(\mathbb K)\rightarrow P_2(\mathbb K)$ is given by
		\begin{multicols}{4}
			\noindent
			\begin{align*}
				a_1 & \longmapsto \hat a_2 \hat a_1,
			\end{align*}
			
			\noindent
			\begin{align*}
				b_1 & \longmapsto \hat b_2 \hat b_1,
			\end{align*}
			
			\noindent
			\begin{align*}
				a_2 & \longmapsto \hat a_2,
			\end{align*}
			
			\noindent
			\begin{align*}
				b_2 & \longmapsto \hat b_2.
			\end{align*}
		\end{multicols}
	\end{prop}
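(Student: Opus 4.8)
The plan is to realise $P_2(\mathbb K)$ as a split group extension and to read the semidirect product off the associated short exact sequence. First I would invoke the Fadell--Neuwirth fibration obtained by forgetting the second point,
\begin{equation*}
	\mathbb K\setminus\{*\}\longrightarrow F_2(\mathbb K)\longrightarrow \mathbb K,
\end{equation*}
whose fibre is the once-punctured Klein bottle. Since $\mathbb K$ is aspherical we have $\pi_2(\mathbb K)=0$, and since the fibre is connected the long exact homotopy sequence collapses to
\begin{equation*}
	1\longrightarrow \pi_1(\mathbb K\setminus\{*\})\longrightarrow P_2(\mathbb K)\xrightarrow{\ p\ }\pi_1(\mathbb K)\longrightarrow 1.
\end{equation*}
As the punctured Klein bottle is homotopy equivalent to a wedge of two circles, its fundamental group is free of rank two; I would identify it with $F_2(a_2,b_2)$ by taking $\hat a_2,\hat b_2$ as free generators, noting that the loop $\hat c_{1,2}$ around the puncture is then the word $\hat b_2^{-1}\hat a_2\hat b_2\hat a_2$, exactly as in relation (5) of Proposition \ref{presentacao Puras Carolina(prop)}, consistent with $c_{1,2}$ being a conjugate of the Klein bottle relator.

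Next I would produce a splitting of $p$. Geometrically this amounts to a cross-section of the fibration, which exists because $\mathbb K$ admits fixed-point-free self-maps; algebraically I would define $s\colon \pi_1(\mathbb K)\to P_2(\mathbb K)$ on generators by $s(a_1)=\hat a_2\hat a_1$ and $s(b_1)=\hat b_2\hat b_1$, check that $p\circ s=\mathrm{id}$, and verify that the single defining relation of $\pi_1(\mathbb K)=\langle a_1,b_1: a_1b_1a_1=b_1\rangle$ is respected, i.e. that
\begin{equation*}
	(\hat a_2\hat a_1)(\hat b_2\hat b_1)(\hat a_2\hat a_1)=\hat b_2\hat b_1
\end{equation*}
holds in $P_2(\mathbb K)$ using relations (1)--(8). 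Once $s$ is a well-defined splitting, the splitting lemma for groups gives $P_2(\mathbb K)\cong F_2(a_2,b_2)\rtimes_{\theta}\pi_1(\mathbb K)$, and the induced canonical isomorphism is precisely the map $\Theta_P$ of the statement, with the action $\theta_q(n)=s(q)\,n\,s(q)^{-1}$.

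It then remains to compute $\theta$ explicitly. I would conjugate the kernel generators $a_2=\hat a_2$ and $b_2=\hat b_2$ by $s(a_1)=\hat a_2\hat a_1$ and by $s(b_1)=\hat b_2\hat b_1$, using relations (1)--(8) (after eliminating $\hat c_{1,2}$ through relation (5)) to rewrite the outputs as reduced words in $\hat a_2,\hat b_2$. This yields $\theta_{a_1}$ and $\theta_{b_1}$ on the generators, and the formulas for a general element $a_1^{r}b_1^{2s}$ or $a_1^{r}b_1^{2s+1}$ follow by composing these two automorphisms, the parity of the $b_1$-exponent producing the two cases recorded in the statement.

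The main obstacle is the explicit conjugation bookkeeping in this last step, together with the well-definedness check for the section. The generator $b_1$ reverses orientation, so conjugation by $s(b_1)$ sends $a_2\mapsto a_2^{-1}$ and introduces the extra $a_2$-factors flanking $b_2$; tracking these factors through relations (2), (3), (6), (7) and (8) --- all of which involve $\hat c_{1,2}$ --- is where the computation is most delicate. One must simultaneously confirm that the two resulting automorphisms satisfy $\theta_{a_1}\theta_{b_1}\theta_{a_1}=\theta_{b_1}$ in $\mathrm{Aut}(F_2(a_2,b_2))$, which is exactly the consistency condition guaranteeing that $\theta$ descends to a homomorphism on $\pi_1(\mathbb K)$ and hence that the semidirect product is well defined.
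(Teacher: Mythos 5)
Your proposal cannot be checked against an internal argument, because the paper does not prove this proposition at all: it is imported verbatim, with attribution, from Lemma 3.6 and Corollary 3.7.1 of \cite{gonccalves2025nielsen}. That said, your argument is correct, and it is essentially the standard derivation used in that reference and in the surface braid group literature. The Fadell--Neuwirth fibration does give the short exact sequence $1\to\pi_1(\mathbb K\setminus\{*\})\to P_2(\mathbb K)\to\pi_1(\mathbb K)\to 1$ with free kernel $\langle\hat a_2,\hat b_2\rangle$, and your identification of the loop around the puncture with $\hat c_{1,2}=\hat b_2^{-1}\hat a_2\hat b_2\hat a_2$ is the right consistency check. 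The two computations you flag as delicate do go through in the presentation recalled in Proposition \ref{presentacao Puras Carolina(prop)}: for the section, relations (1)--(3) and (5) yield $\hat a_1\hat b_2\hat a_1^{-1}=\hat b_2\hat c_{1,2}$ and $\hat a_1\hat b_1\hat a_1=\hat c_{1,2}^{-1}\hat b_1$, which reduce $(\hat a_2\hat a_1)(\hat b_2\hat b_1)(\hat a_2\hat a_1)$ to $\hat a_2\hat b_2\hat b_1\hat a_2$, and relations (5)--(7) give $\hat b_1\hat a_2\hat b_1^{-1}=\hat b_2^{-1}\hat a_2^{-1}\hat b_2$, finishing the verification that $s$ is a homomorphism; for the action, conjugation by $s(a_1)$ and $s(b_1)$ gives $\theta_{a_1}(a_2)=a_2$, $\theta_{a_1}(b_2)=a_2^2b_2$, $\theta_{b_1}(a_2)=a_2^{-1}$, $\theta_{b_1}(b_2)=a_2b_2a_2$, and since $\theta_{b_1}^2=\mathrm{id}$ the general formulas depend only on $r$ and the parity of the $b_1$-exponent, exactly as stated. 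One redundancy worth noting: your closing ``consistency condition'' $\theta_{a_1}\theta_{b_1}\theta_{a_1}=\theta_{b_1}$ needs no separate verification, because once $s$ is known to be a well-defined homomorphism, the assignment $q\mapsto s(q)(\cdot)\,s(q)^{-1}$ is automatically a homomorphism $\pi_1(\mathbb K)\to\mathrm{Aut}(F_2(a_2,b_2))$; the check is subsumed in the well-definedness of the section.
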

	
	\begin{rem}\label{rem P2 normal form pri}
		\begin{enumerate}[i)]
			\item \label{F2pi1K:normal form} 		
				We describe an element in $F_2(a_2,b_2)\rtimes_\theta \pi_1(\mathbb K)$ as written in its normal form, if it is written in the form $w(a_2,b_2) a_1^sb_1^t$, where $w(a_2,b_2)$ is a word in $F_2(a_2,b_2)$ and $s,t$ are integers, i.e. $a_1^sb_1^t$ is an element in $\pi_1(\mathbb K)$.
				Two braid in $F_2(a_2,b_2)\rtimes_\theta \pi_1(\mathbb K)$ in their normal form $w_1(a_2,b_2) a_1^{s_1}b_1^{t_1}$ and $w_2(a_2,b_2) a_1^{s_2}b_1^{t_2}$ are equal, if and only if, $w_1(a_2,b_2)=w_2(a_2,b_2)$, $s_1=s_2$ and $t_1=t_2$.
			\item \label{pr1,pr2}
				To compute the lift factors $f_1$ and $f_2$ of a a non-split map we use the projections $pr_i:F_2(\mathbb K)\rightarrow \mathbb K$ to each component.
				The induced homomorphisms $(pr_1)_\#:P_2(\mathbb K)\rightarrow \langle a_1,b_1: b_1^{-1}a_1b_1a_1\rangle$ takes $\hat a_1$ to $a_1$, $\hat b_1$ to $b_1$ and both $\hat a_2,\hat b_2$ to $1$ and the homomorphism $(pr_1)_\#:P_2(\mathbb K)\rightarrow \langle a_2,b_2:b_2^{-1} a_2b_2a_2\rangle$ takes both $\hat a_1, \hat b_1$ to $1$, $\hat a_2$ to $a_2$ and $\hat b_2$ to $b_2$.
		\end{enumerate}
	\end{rem}
	
	\subsection{Total Braid Group}
	
	For the total braid group of the Klein bottle on two strands we use again a presentation given in \cite{guaschi2020lower}, where we start counting at $5)$, since the relation $1)$ to $4)$ are trivial for $n=2$.
	
	\begin{prop}\cite[Theorem 2.2]{guaschi2020lower}\label{presentation Carolina B_n(K)(teo)} 
		The $2$-th total braid group of the Klein bottle admits the following presentation.
		The generating set is $\{a,b,\sigma_1\}$, and
		the defining relations are
		\begin{multicols}{2}
			
			\noindent
			\begin{enumerate}
				\item[5.] $b^{-1}\sigma_1 a=\sigma_1 a\sigma_1 b^{-1}\sigma_1$;
				\item[6.] $a(\sigma_1 a \sigma_1)=(\sigma_1 a \sigma_1)a$;
				
				\item[7.] $b(\sigma_1^{-1} b \sigma_1)=(\sigma_1^{-1} b \sigma_1^{-1})b$;
				\item[8.] $\sigma_1^2=ba^{-1}b^{-1}a^{-1}$
			\end{enumerate}
		\end{multicols}
	\end{prop}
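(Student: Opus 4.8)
The plan is to obtain this presentation from that of the pure braid group $P_2(\mathbb{K})$ in Proposition \ref{presentacao Puras Carolina(prop)} together with the short exact sequence coming from the double covering $F_2(\mathbb{K})\to D_2(\mathbb{K})$. Passing to fundamental groups in this covering gives
\begin{equation*}
	1\longrightarrow P_2(\mathbb{K})\longrightarrow B_2(\mathbb{K})\stackrel{\rho}{\longrightarrow}\mathcal{S}_2\longrightarrow 1,
\end{equation*}
where $\rho$ sends a braid to the permutation it induces on the two strands. I would fix $\sigma_1\in B_2(\mathbb{K})$ to be the class of the half-twist, so that $\rho(\sigma_1)=(12)$ generates $\mathcal{S}_2=\langle t\mid t^2\rangle$ and, since $\rho(\sigma_1^2)=id$, we have $\sigma_1^2\in P_2(\mathbb{K})$.

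To present the middle term of this extension I would use the standard method for presenting a group extension from presentations of its kernel and its quotient. Starting from $P_2(\mathbb{K})=\langle \hat a_1,\hat b_1,\hat a_2,\hat b_2\mid R\rangle$, obtained from Proposition \ref{presentacao Puras Carolina(prop)} after using relation $(5)$ to eliminate $\hat c_{1,2}$, a presentation of $B_2(\mathbb{K})$ has generators $\hat a_1,\hat b_1,\hat a_2,\hat b_2,\sigma_1$ and relations of three types: (i) the pure braid relations $R$; (ii) a single relation $\sigma_1^2=u$ expressing $\sigma_1^2$ as a word $u$ in $P_2(\mathbb{K})$; and (iii) for each pure braid generator $g$, a relation $\sigma_1 g\sigma_1^{-1}=v_g$ recording the conjugation action of the half-twist. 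The geometric data I would then compute are the words $u$ and $v_g$. Reading a braid diagram on a fundamental polygon of $\mathbb{K}$, the full twist is the linking braid $\sigma_1^2=\hat c_{1,2}=\hat b_1\hat a_1^{-1}\hat b_1^{-1}\hat a_1^{-1}$, while conjugation by $\sigma_1$ interchanges the two strands, so that $\sigma_1\hat a_1\sigma_1^{-1}=\hat a_2$ and $\sigma_1\hat b_1\sigma_1^{-1}=\hat b_2$ up to correction terms supported on $\hat a_2,\hat b_2,\hat c_{1,2}$ that I would read off from the diagram.

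The last step is a Tietze simplification. Setting $a:=\hat a_1$ and $b:=\hat b_1$, the relations of type (iii) let me express $\hat a_2=\sigma_1 a\sigma_1^{-1}$ and $\hat b_2=\sigma_1 b\sigma_1^{-1}$, so I would eliminate $\hat a_2,\hat b_2$ from the generating set. Substituting these expressions into the relations $R$ and into the remaining conjugation relations collapses the presentation to the three generators $a,b,\sigma_1$; relation (ii) then reads $\sigma_1^2=ba^{-1}b^{-1}a^{-1}$, which is relation $8$, and I would verify that the surviving relations reduce exactly to relations $5$, $6$ and $7$.

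The main obstacle will be the explicit determination of the conjugation words $v_g=\sigma_1 g\sigma_1^{-1}$. In contrast to the planar braid group, conjugation by the half-twist on a surface of nontrivial topology produces correction terms, so computing these words requires a careful braid-diagram analysis on a fundamental polygon of $\mathbb{K}$ (equivalently, an analysis of the $\mathcal{S}_2$-action on $\pi_1(F_2(\mathbb{K}))=P_2(\mathbb{K})$). Once this data is in hand, the reduction to the four relations $5$--$8$ is a lengthy but mechanical rewriting that uses relations $(1)$--$(8)$ of Proposition \ref{presentacao Puras Carolina(prop)}.
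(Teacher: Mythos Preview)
The paper does not prove this statement. Proposition~\ref{presentation Carolina B_n(K)(teo)} is quoted verbatim from \cite[Theorem~2.2]{guaschi2020lower} (specialised to $n=2$, which is why the numbering begins at relation~5), and no argument is supplied. So there is nothing to compare your proposal against.

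That said, your strategy---presenting $B_2(\mathbb K)$ via the extension $1\to P_2(\mathbb K)\to B_2(\mathbb K)\to\mathcal S_2\to 1$ and then Tietze-reducing---is exactly the method the paper employs in the \emph{next} result, Proposition~\ref{cap-braids:Presentation B_2(minha)(theo)}, to derive an alternative presentation of $B_2(\mathbb K)$. There, however, the logic runs in the opposite direction: the paper takes Proposition~\ref{presentation Carolina B_n(K)(teo)} as input (in particular, it uses relation~8 to identify $\sigma_1^2$ and relations~5--7 to compute the conjugation action of $\sigma_1$), rather than deriving it. Your plan to compute the words $v_g=\sigma_1 g\sigma_1^{-1}$ and $u=\sigma_1^2$ directly from braid diagrams on a fundamental polygon is a legitimate independent route, but it is not what the paper does, and you correctly flag the diagram analysis as the nontrivial step.
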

	
	The generators of the subgroup $P_2(\mathbb K)$, which are $\hat a_1,\hat b_1,\hat a_2$ and $\hat b_2$, are given in this presentation for $B_2(\mathbb K)$ by $a$, $b$, $\sigma_1 a \sigma_1$ and $\sigma_1^{-1} b \sigma_1^{-1}$, respectively, describes the inclusion 
	$i:P_2(\mathbb K)\rightarrow B_2(\mathbb K)$.
	Consider the map $\rho:B_2(\mathbb K)\rightarrow \mathcal S_2$, which takes each braid to its underlying permutation.
	In this case, for braids on two strands, $\rho$ takes every non-pure braid to $(12)$ and all pure braids to the identity.
	Hence we have the following short exact sequence, where we can identify $P_2(\mathbb K)$ with $F_2(a_2,b_2)\rtimes_\theta \pi_1(\mathbb K)$ through the isomorphism $\Theta_P$ given in Proposition \ref{P_2=produto semidireto e presentacao F2 |x pi1K};
	
	\begin{equation}
		\begin{tikzcd}
			1\arrow{r} & F_2\rtimes_{\theta}\pi_1(\mathbb K)
			\arrow{r}{i\circ\Theta_P} & B_{2}(\mathbb K)\arrow{r}{\rho}
			& \mathcal S_2\arrow{r}& 1, \label{sequence total braid group}
		\end{tikzcd}
	\end{equation}
	
	Using the standard method to obtain a presentation of a group extension we obtain a presentation for $B_2(\mathbb \mathbb K)$, the total braid group on two strands in terms of elements in the free group on two letters $F_2(a_2,b_2)$, the Klein bottle $\pi_1(\mathbb K)=\langle a_1,b_1:b_1^{-1}a_1b_1a_1\rangle$ and an element $\sigma$, which can be identified with the non-trivial element in $\mathcal S_2$.
		
	\begin{prop}\label{cap-braids:Presentation B_2(minha)(theo)}
		The braid group of the Klein bottle with two strings $B_2(\mathbb K)$ admits the following presentation.
		The generating set is $\{a_1, b_1, a_2, b_2,\sigma\}$
		and the defining relations are
		
		\begin{multicols}{2}
			
			\noindent
			\begin{enumerate}
				\item $ a_1 a_2= a_2 a_1$;
				\item $ a_1 b_2 a_1^{-1}= a_2^2 b_2$;
				\item $ b_1 a_2 b_1^{-1}= a_2^{-1}$;
				\item $ b_1 b_2 b_1^{-1}= a_2 b_2 a_2$;
				\item $ a_1 b_1 a_1 b_1^{-1}=1$;
				
				\item $\sigma^2= b_2^{-1} a_2 b_2 a_2$;
				\item $\sigma^{-1} a_1\sigma= a_1$;
				\item $\sigma^{-1} b_1\sigma
					=\sigma^{-2}b_1$;
				\item $\sigma^{-1} a_2\sigma
					=a_2^{-1}\sigma^2a_1$;
				\item $\sigma^{-1} b_2\sigma
					=\sigma^{-2}b_2^{-1}b_1;$
			\end{enumerate}
		\end{multicols}
	\end{prop}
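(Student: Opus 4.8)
The plan is to apply the standard method for writing a presentation of a group extension to the short exact sequence \eqref{sequence total braid group}, taking as kernel $K=P_2(\mathbb K)$ equipped with the semidirect-product presentation $F_2(a_2,b_2)\rtimes_\theta\pi_1(\mathbb K)$ of Proposition \ref{P_2=produto semidireto e presentacao F2 |x pi1K}, and as quotient $Q=\mathcal S_2=\langle t\mid t^2\rangle$. Recall that, once a set-theoretic section $s\colon Q\to B_2(\mathbb K)$ of $\rho$ is fixed, such a presentation is assembled from three blocks: (a) the defining relations of $K$; (b) for each generator $k$ of $K$, a relation rewriting the conjugate $\sigma^{-1}k\sigma$ as a word in the generators of $K$, where $\sigma=s(t)$; and (c) for each relator of $Q$, a relation rewriting its lift under $s$ as a word in the generators of $K$. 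Here I would choose the lift $\sigma:=\sigma_1$ of the non-trivial permutation $t=(12)$, so that the full generating set becomes $\{a_1,b_1,a_2,b_2,\sigma\}$.

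First I would read off block (a). The semidirect-product presentation of Proposition \ref{P_2=produto semidireto e presentacao F2 |x pi1K} consists of the single relator $a_1b_1a_1b_1^{-1}$ coming from $\pi_1(\mathbb K)$ (relation $5$), no relations among the free generators $a_2,b_2$, and the four action relations $\theta_{a_1}(a_2)=a_2$, $\theta_{a_1}(b_2)=a_2^2b_2$, $\theta_{b_1}(a_2)=a_2^{-1}$ and $\theta_{b_1}(b_2)=a_2b_2a_2$, which are exactly relations $1$ through $4$. Thus relations $1$--$5$ are precisely the relations of the kernel.

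Next comes block (c). The only relator of $\mathcal S_2$ is $t^2$, whose lift is $\sigma^2=\sigma_1^2$, to be rewritten inside $P_2(\mathbb K)$. By relation $8$ of Guaschi's presentation (Proposition \ref{presentation Carolina B_n(K)(teo)}) we have $\sigma_1^2=ba^{-1}b^{-1}a^{-1}$, and under the identifications $\hat a_1=a$, $\hat b_1=b$ this equals the pure braid $\hat b_1\hat a_1^{-1}\hat b_1^{-1}\hat a_1^{-1}=\hat c_{1,2}$. Relation $5$ of Proposition \ref{presentacao Puras Carolina(prop)} rewrites $\hat c_{1,2}=\hat b_2^{-1}\hat a_2\hat b_2\hat a_2$, so translating through $\hat a_2=a_2$, $\hat b_2=b_2$ yields $\sigma^2=b_2^{-1}a_2b_2a_2$, which is relation $6$.

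Finally, block (b) produces relations $7$ through $10$, one conjugation relation $\sigma^{-1}k\sigma$ for each kernel generator $k\in\{a_1,b_1,a_2,b_2\}$. To carry these out I would use the dictionary supplied by the inclusion $i$ and the isomorphism $\Theta_P$, namely $\sigma=\sigma_1$, $a_2=\hat a_2=\sigma_1a\sigma_1$, $b_2=\hat b_2=\sigma_1^{-1}b\sigma_1^{-1}$, $a_1=\hat a_2\hat a_1=\sigma_1a\sigma_1a$ and $b_1=\hat b_2\hat b_1=\sigma_1^{-1}b\sigma_1^{-1}b$; then rewrite each conjugate inside Guaschi's presentation and push the result back to our generators. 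For instance $\sigma^{-1}a_1\sigma=a\,\sigma_1a\sigma_1$, which equals $\sigma_1a\sigma_1a=a_1$ precisely by relation $6$ of Proposition \ref{presentation Carolina B_n(K)(teo)}, giving relation $7$. This last block is where the real work lies: computing $\sigma^{-1}a_2\sigma$ and $\sigma^{-1}b_2\sigma$ (relations $9$ and $10$) requires repeated use of Guaschi's relations $5$--$8$ to move the letters $\sigma_1^{\pm1}$ past $a$ and $b$, and to verify that each outcome lands in $P_2(\mathbb K)$ in the stated form; this is the main obstacle I expect. Once all four conjugation relations together with $\sigma^2=b_2^{-1}a_2b_2a_2$ are confirmed, the standard extension theorem guarantees that relations $1$--$10$ constitute a complete set of defining relations for $B_2(\mathbb K)$ on the generators $\{a_1,b_1,a_2,b_2,\sigma\}$.
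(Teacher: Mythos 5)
Your proposal is correct and follows essentially the same route as the paper: the standard presentation-of-an-extension method applied to the sequence $1\to F_2\rtimes_\theta\pi_1(\mathbb K)\to B_2(\mathbb K)\to \mathcal S_2\to 1$ with the transversal $\sigma=\sigma_1$, producing the same three blocks of relations (kernel relations $1$--$5$, the rewritten quotient relator $6$, and the conjugation relations $7$--$10$). The only cosmetic differences are that you obtain relation $6$ via the pure-braid identity $\hat c_{1,2}=\hat b_1\hat a_1^{-1}\hat b_1^{-1}\hat a_1^{-1}=\hat b_2^{-1}\hat a_2\hat b_2\hat a_2$ where the paper manipulates $\sigma_1^2$ directly with relation $8$ of Guaschi's presentation, and you verify the easy conjugation relation $7$ as your sample computation where the paper works out relation $10$, leaving the rest "analogous" in both cases.
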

	\begin{proof}
		Consider the short exact sequence \eqref{sequence total braid group}, then $(i\circ\Theta_P): F_2\rtimes_{\theta}\pi_1(\mathbb K)\rightarrow B_2(\mathbb K)$ takes $a_1,b_1,a_2$ and $b_2$ to $\sigma_1 a \sigma_1 a, \sigma_1^{-1} b \sigma_1^{-1} b, \sigma_1 a \sigma_1	$ and $\sigma_1^{-1} b \sigma_1^{-1}$ respectively.		
		From the standard method to obtain a presentation of a group extension (see for example \cite[page 139]{johnson1997presentations}) it follows that the generating set for $B_2(K)$ is $\{(i\circ\Theta_P)(a_1),(i\circ\Theta_P)(b_1),(i\circ\Theta_P)(a_2),(i\circ\Theta_P)(b_2), \sigma\}$, where $\sigma$ is a transversal of $\rho$.
		Since $\rho(\sigma)=(12)$ we can choose $\sigma$ being $\sigma_1$ of the presentation given in Proposition \ref{presentation Carolina B_n(K)(teo)}.
		Furthermore there are three types of relations, coming from the relations in $F_2\rtimes_{\theta}\pi_1(\mathbb K)$, the relation in $\mathcal S_2$ and the conjugates of generators of $F_2\rtimes_{\theta}\pi_1(\mathbb K)$ by the generators of $\mathcal S_2$.
		The first five relations are given by the relations in $F_2\rtimes_{\theta}\pi_1(\mathbb K)$, as in Proposition \ref{P_2=produto semidireto e presentacao F2 |x pi1K}.
		The following relation is induced by the relation in $\mathcal S_2$, which we compute using Relation \textit{8.} of Proposition \ref{presentation Carolina B_n(K)(teo)}, i.e. $b^{-1}\sigma^2a b a=1$;
		\begin{align}\label{equation sigma^2=...(eq)}
			\sigma^2=\sigma_1^2=\sigma_1 b^{-1}\sigma_1^2a b a\sigma_1
			=(\sigma_1 b^{-1}\sigma_1)(\sigma_1 a\sigma_1)(\sigma_1^{-1}b\sigma_1^{-1})(\sigma_1 a\sigma_1)
			=(i\circ\Theta_P)(b_2^{-1}a_2b_2a_2).
		\end{align}
		The relations \textit{7.} to \textit{10.} are computed by conjugating the generators of $F_2\rtimes_{\theta}\pi_1(\mathbb K)$, using the relations of Proposition \ref{presentation Carolina B_n(K)(teo)}, as well as the Relations \textit{1.} to \textit{6.} and Equation \eqref{equation sigma^2=...(eq)}. As an example we compute Relation \textit{10}, all others follow analogously.
		\begin{align*}
			\sigma^{-1} b_2 \sigma=& \sigma_1^{-1} (i\circ\Theta_P)(b_2)\sigma_1
			=\sigma_1^{-1} (\sigma_1^{-1} b \sigma_1^{-1} )\sigma_1	= \sigma^{-2} b
			=\sigma_1^{-2}(\sigma_1^{-1}b\sigma_1^{-1})^{-1}(\sigma_1^{-1} b \sigma_1^{-1}b)\\
			&=(i\circ\Theta_P)((a_2^{-1}b_2^{-1}a_2^{-1}b_2)b_2^{-1}b_1)
			=\sigma^{-2}(i\circ\Theta_P)(b_2^{-1}b_1).
		\end{align*}		
		We identify $a_1,b_1,a_2$ and $b_2$  with  $(i\circ\Theta_P)(a_1), (i\circ\Theta_P)(b_1),(i\circ\Theta_P)(a_2)$ and $(i\circ\Theta_P)(b_2)$ respectively and $\sigma$ with $\tau(\sigma_1)$ and obtain the wanted relations.
	\end{proof}
	
	\begin{cor}\label{relations sigma^2} 
		In the presentation of Proposition \ref{cap-braids:Presentation B_2(minha)(theo)} it holds that $(\sigma^{-1}b_1\sigma)^s=\sigma^{-1+(-1)^{s}}b_1^s$, for any $s\in \mathbb Z$.
	\end{cor}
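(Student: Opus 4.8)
The plan is to extract from Relation~8 of Proposition~\ref{cap-braids:Presentation B_2(minha)(theo)} a clean conjugation rule and then combine it with an elementary telescoping identity. First I would rewrite Relation~8, namely $\sigma^{-1}b_1\sigma=\sigma^{-2}b_1$, by multiplying on the left by $\sigma$; this yields $b_1\sigma=\sigma^{-1}b_1$, or equivalently $b_1\sigma b_1^{-1}=\sigma^{-1}$. Thus conjugation by $b_1$ sends $\sigma$ to $\sigma^{-1}$, and this single structural fact is what everything else rests on.

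Next I would upgrade this to powers of $b_1$ by a short induction: iterating $b_1\sigma b_1^{-1}=\sigma^{-1}$ gives $b_1^{s}\sigma b_1^{-s}=\sigma^{(-1)^{s}}$ for every $s\in\mathbb Z$. The cases $s=0,1$ are immediate, the inductive step applies the basic rule once more, and negative $s$ follow by taking inverses. Equivalently, $b_1^{s}\sigma=\sigma^{(-1)^{s}}b_1^{s}$, which records precisely how $\sigma$ commutes past $b_1^{s}$ with a parity-dependent sign.

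In parallel I would note the telescoping identity $(\sigma^{-1}b_1\sigma)^{s}=\sigma^{-1}b_1^{s}\sigma$, valid for all $s\in\mathbb Z$ because the interior factors $\sigma\sigma^{-1}$ cancel (for $s=0$ both sides equal the identity, and for negative $s$ the same cancellation applies to the inverse word $\sigma^{-1}b_1^{-1}\sigma$). Combining the two ingredients then finishes the argument: $\sigma^{-1}b_1^{s}\sigma=\sigma^{-1}(b_1^{s}\sigma)=\sigma^{-1}\sigma^{(-1)^{s}}b_1^{s}=\sigma^{-1+(-1)^{s}}b_1^{s}$, which is exactly the claimed formula.

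The computation is short, so I do not expect a genuine obstacle; the only point that needs care is the parity bookkeeping—tracking the sign $(-1)^{s}$ as $\sigma$ is carried across $b_1^{s}$—together with making sure both the conjugation rule and the telescoping identity are stated for all integers $s$, including $s=0$ and negative values, and not merely for positive exponents.
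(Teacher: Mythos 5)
Your proof is correct and takes essentially the same route as the paper: both extract from Relation~8 the commutation rule $b_1\sigma=\sigma^{-1}b_1$ (equivalently $b_1\sigma b_1^{-1}=\sigma^{-1}$) and then track the parity-dependent sign $(-1)^s$ as $\sigma$ is carried past powers of $b_1$. The only difference is organizational: the paper's sketch proves the auxiliary identity $(\sigma^{-2}b_1)^{2t}=b_1^{2t}$ and then splits into even and odd $s$, whereas your telescoping identity $(\sigma^{-1}b_1\sigma)^s=\sigma^{-1}b_1^{s}\sigma$ combined with $b_1^{s}\sigma=\sigma^{(-1)^{s}}b_1^{s}$ absorbs that case division into the exponent.
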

	\begin{proof}
		Relation \textit{8.} of Proposition \ref{cap-braids:Presentation B_2(minha)(theo)} implies that $\sigma b_1=b_1\sigma^{-1}$.
		To prove the corollary show first that $(\sigma^{-2}b_1)^{2t}=b_1^{2t}$ for any $t\in \mathbb Z$, then use relation \textit{8.} of Proposition \ref{cap-braids:Presentation B_2(minha)(theo)} and divide the cases depending whether $s$ is even or odd.
	\end{proof}
	
	\begin{rem}\label{elements in B2(K)}
		\begin{enumerate}[i)]
			\item\label{thetaa^rb^s}
				Note that relations \textit{1.} to \textit{5.} are determined by the relations in $F_2(a_2,b_2)\rtimes_\theta\pi_1(\mathbb K)$, where $\theta$ is given as in Proposition \ref{P_2=produto semidireto e presentacao F2 |x pi1K}, hence for a word $w(a_2,b_2)\in F_2(a_2,b_2)$ it holds, for every $r,s\in \mathbb Z$, that $a_1^rb_1^s w(a_2,b_2)=w(\theta_{a_1^rb_1^s}(a_2),\theta_{a_1^rb_1^s}(b_2)) a_1^rb_1^s$, where $\theta_{a_1^rb_1^s}$ is as in Proposition \ref{P_2=produto semidireto e presentacao F2 |x pi1K}.
			\item \label{B2 normalform}
				From Theorem \ref{cap-braids:Presentation B_2(minha)(theo)} it follows that every element $\alpha$ in $B_2(K)$ can be written in the form $\alpha=w(a_2,b_2) a_1^rb_1^s\sigma^k$, where $w(a_2,b_2)$ is a word in $F_2(a_2,b_2)$, $a_1^rb_1^s\in \pi_1(\mathbb K)$ 
				and $k$ is either zero or one.
				Note that $k$ is zero if and only if $\alpha\in F_2(a_2,b_2)\rtimes_{\theta}\pi_1(\mathbb K)$, i.e. a pure braid.
				We call this form the \textit{normal form} in $B_2(\mathbb K)$ and it holds that two elements are equal if their normal form is equal.
				Hence to solve an equation in $B_2(\mathbb K)$  means to solve equations in $F_2(a_2,b_2)$ and in $\pi_1(\mathbb K)$
			\item\label{pr}
				To solve equations in $F_2(a_2,b_2)$ it helps to look at the equation in different quotient spaces, in this case we use the quatient space  $F_2(a_2,b_2)/\langle b_2^{-1}a_2b_2a_2 \rangle$, which is isomorphic to the fundamental group of the Klein bottle.
				Let $pr:F_2(a_2,b_2)\rightarrow F_2(a_2,b_2)/\langle b_2^{-1}a_2b_2a_2 \rangle$ be the projection to the quotient space.
				We can write the image of every word $w_2(a_2,b_2)\in F_2(a_2,b_2)$ as $pr(w(a_2,b_2))=a_2^mb_2^n$, where $m,n\in \mathbb Z$.
				Note that $n=|w(a_2,b_2)|^{b_2}$ is the sum of exponents of $b_2$ of the word $w(a_2,b_2)$, but $m$ is not necessary the sum of the exponents of $a_2$.
		\end{enumerate}
	\end{rem}

\section{Nielsen number for 2-valued non-split maps on the Klein bottle}\label{sec:Nielsennumber}
	In this section we display a formula for the Nielsen number of $2$-valued non-split maps on the Klein bottle.
	For that purpose let $\phi:\mathbb K\multimap \mathbb K$ be a $2$-valued non-split map and let $\Phi:\mathbb K\rightarrow D_2(\mathbb K)$ be the correspondent map.
	Let furthermore the induced homomorphism be as in Equation \eqref{Phi nonsplit}, i.e. as follows;
	\begin{align}
		\Phi_\#:
		\alpha &\longmapsto\hat\alpha=w_1(a_2,b_2)a_1^{r_1}b_1^{s_1}\sigma^{k_1}\label{Phi non-split}\\
		\beta &\longmapsto\hat\beta=w_2(a_2,b_2)a_1^{r_2}b_1^{s_2}\sigma^{k_2},\nonumber
	\end{align}
	where $k_1,k_2\in \mathbb Z_2$ and $r_1,r_2,s_1,s_2\in \mathbb N$, $pr(w_i)=a_2^{m_i}b_2^{n_i}$, for $i=0,1$, where $pr$ is the projection as in Remark \ref{elements in B2(K)}\ref{pr}).
	That is, we wrote $\hat \alpha$ and $\hat \beta$ as arbitrary braids in $B_2(\mathbb K)$ in their normal form, as described in Remark \ref{elements in B2(K)}\ref{B2 normalform}).
	Since we require for $\phi$ to be a map we suppose that the variables are such that $\hat \alpha\hat \beta \hat \alpha=\hat \beta$.
	Since $\phi$ isn't split, it holds that $\Phi_\#(\pi_1(\mathbb K))\not\subset P_2(\mathbb K)$, therefore at least one of the generators is mapped to a braid that isn't pure.
	Hence it can't happen that both $k_1$ and $k_2$ are equal to zero.
	We call $\phi$ of \textit{type $(A)$} if $\hat \alpha$ is pure and $\hat \beta$ is not, i.e. $k_1=0$ and $k_2=1$ and of \textit{type $(B)$} if $k_1=1$. 
	We compute the Nielsen number for map of each type in terms of $\hat \alpha$ and $\hat \beta$.
	
	\subsection{Maps of type (A)}\label{maps A}

	Let $\phi_A:\mathbb K \multimap\mathbb  K$ be a $2$-valued (non-split) map of type $(A)$ and consider the correspondent map $\Phi_A:\mathbb K\rightarrow D_2(\mathbb K)$, 
	where the induced homomorphism is as in Equation \eqref{Phi non-split}, with $k_1=0$ and $k_2=1$, i.e.
		\begin{align}
			\Phi_\#:
			\alpha &\longmapsto\hat\alpha=w_1(a_2,b_2)a_1^{r_1}b_1^{s_1}\label{Phi non-split type A}\\
			\beta &\longmapsto\hat\beta=w_2(a_2,b_2)a_1^{r_2}b_1^{s_2}\sigma,\nonumber
		\end{align}
	where $r_1,r_2,s_1,s_2\in \mathbb N$, $pr(w_i)=a_2^{m_i}b_2^{n_i}$, for $i=0,1$, where $pr:F_2(a_2,b_2)\rightarrow F_2(a_2,b_2)/ \langle a_2b_2a_2b_2^{-1}\rangle$ is the projection as in Remark \ref{elements in B2(K)}\eqref{pr}.
	We follow the construction described in Section \ref{sec:2valued maps &BU}.
	Consider the subgroup $H_A=(\Phi_A)_\#^{-1}(P_2(\mathbb K))\subseteq\pi_1(\mathbb K)$, which is kernel of the map $\rho\circ (\Phi_A)_\#$.
	Using the Reidemeister-Schreier method we find a presentation
	$H_A=\langle \alpha, \beta^2:\alpha\beta^2=\beta^2\alpha\rangle$,
	which is isomorphic to the fundamental group of the Torus $\pi_1(\mathbb T)=\langle a,b:ab=ba\rangle $, identifying $\alpha$ with $a$ and $\beta^2$ with $b$.
	The double covering $q_A$ that corresponds to $H_A$ is given by $(q_A)_\#:\pi_1(\mathbb T) \rightarrow \pi_1(\mathbb K)$ mapping $a$ to $\alpha$ and $b$ to $\beta^2$.
	Then the map $\hat \Phi_A=\{f_1,f_2\}:\tilde {\mathbb K}_B\rightarrow F_2(\mathbb K)$, where the induced homomorphism $(\hat\Phi_A)_\#$ takes $a$ to $\hat \alpha=w_1(a_2,b_2)a_1^{r_1}b_1^{s_1}$ and $b$ to $(\hat\beta)^2=(w_2(a_2,b_2)a_1^{r_2}b_1^{s_2}\sigma)^2$, is a lift of $q_A\circ\Phi_A$.
	The non-trivial deck transformation $\delta_A:\tilde {\mathbb T}\rightarrow \tilde{\mathbb T}$ of the covering $q_A:\tilde {\mathbb T}\rightarrow \mathbb K$ is a free involution and admits a lift to the plane given by $\delta_A(\tilde x,\tilde y)= (1-\tilde x,\tilde y+\frac{1}{2})$.
	Consider $(\delta_A)_\#:\pi_1(\tilde {\mathbb T},\tilde x_1)\rightarrow \pi_1(\tilde{\mathbb T}, \tilde x_2)$ and the path $\lambda:[0,1]\rightarrow \tilde{\mathbb T}$ from $\tilde x_1$ to $\tilde x_2=\delta_A(\tilde x_1)$ as in Figure \ref{decktransf Torus}.
	
		\begin{figure}[h!]\label{decktransf Torus}
		\hfill
		\begin{tikzpicture}[scale=1, very thick]
			{\draw[blue,line width=1.5pt][->>] (0,3) -- (3,3);}
				\node at (1.5,-0.3) {$a$};					
			{\draw[blue,line width=1.5pt][->>] (0,0) -- (3,0);}
			{\draw[blue, line width=1.5pt][<-] (0.05,1.5) -- (3,1.5);}
				\node at (1.5, 1.2) {$\delta_A \circ a$};	
			{\draw[red,line width=1.5pt][->] (0,0) -- (0,3);}
				\node at (3.3,1.5) {$b$};
			{\draw[red,line width=1.5pt][->] (3,0) -- (3,3);}
			{\draw[cyan,line width=1.5pt][->] (0.06,0) -- (0.06,1.45);} 
				\node at (0.3,0.7) {$\tilde \lambda$};
			{\draw[fill=black] (0,0) circle  (0.5mm);}
				\node at (-0.3,0) {$\tilde x_1$};
			{\draw[fill=black] (0,1.5) circle (0.5mm);}
				\node at (-0.7,1.5) {$\delta_A(\tilde x_1)$};
		\end{tikzpicture}
		\hspace*{\fill} 
		\caption{$\tilde{\mathbb T}$ and deck-transformation $\delta_A:\tilde{\mathbb T}\rightarrow \tilde{\mathbb T}$}
	\end{figure}
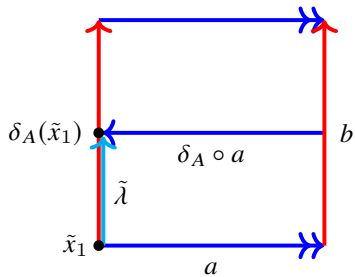
	
	Note that, abusing notation and thinking of $a$ and $b$ as the paths that represent the homotopy classes $a$ and $b$ in $\pi_1(\tilde{\mathbb T},\tilde x_1)$, it holds that $\delta_A \circ a$ and $\delta_A \circ b$ are homotopic to $\tilde \lambda^{-1}*a^{-1}*\tilde \lambda$ and $\tilde \lambda^{-1}*b*\tilde \lambda$ respectively.
	It holds that $\delta\circ a$ is homotopic to $\tilde\lambda^{-1}* a *\tilde \lambda $ and  $\delta\circ b$ is homotopic to $\tilde\lambda^{-1}* a^{-1} * b *\tilde \lambda $, where $*$ denotes concatenation and therefore, using that $f_2=f_1\circ \delta_A$ we have
	\begin{align*}
		f_2(a)&=f_1(\delta_A\circ a)\sim f_1(\tilde \lambda^{-1}* a^{-1}*  \tilde \lambda )=(f_1\circ\tilde \lambda^{-1})* (f_1\circ a^{-1})*  (f_1\circ\tilde\lambda),\\
		f_2(b)&=f_1(\delta_A\circ b)\sim f_1(\tilde \lambda^{-1}* b * \tilde \lambda )=(f_1\circ\tilde \lambda^{-1})* (f_1\circ b) * (f_1\circ\tilde\lambda). 
	\end{align*}		
	Consider the change-of-basepoint homomorphism $T:\pi_1(\mathbb K, p_1)\rightarrow \pi_1(\mathbb K, p_2)$ induced by the shortest path between $p_1=f_1(\tilde x_0)$ and $p_2=f_2(\tilde x_0)=f_1(\delta(\tilde x_0))$, then $T$ takes $a_1$ and $b_1$ to $a_2$ and $b_2$, respectively.
	\begin{figure}[h!]\label{change-of-basepoint homomorphism p1->p2, K}
		\hfill
		\begin{tikzpicture}[scale=1, very thick]
			{\draw[red,line width=1.5pt][->] (2,-3.5) -- (0,-2.5);}
			{\draw[red,line width=1.5pt][->] (0,-4.5) -- (-2,-3.5);}
			{\draw[blue,line width=1.5pt][->>] (0,-2.5) -- (-2,-3.5);} 
			{\draw[blue,line width=1.5pt][<<-] (2,-3.5) -- (0,-4.5);}
			{\draw[fill=black] (-0.5,-3.5) circle (0.5mm);}
				\node at (-0.75,-3.75) {$p_1$};	
			{\draw[fill=black] (0.5,-3.5) circle (0.5mm);}
				\node at (0.75,-3.75) {$p_2$};
			{\draw[cyan,line width=1.5pt][->] (-0.45,-3.5) --  (0.45,-3.5);}
				\node at (0,-3.2) {$\lambda$};
		\end{tikzpicture}
		\hspace*{\fill}
		\caption{path between $p_1$ and $p_2$ in $\mathbb K$}
	\end{figure}
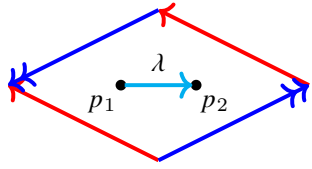
	\noindent
	Then $(f_1\circ \tilde \lambda^{-1}) *\lambda$ is a loop starting at $p_2$ and induces a homotopy class in $\pi_1(\mathbb K,p_2)$, which can be written as $a_2^kb_2^l$, for some $k,l\in \mathbb Z$.
	Then 
	\begin{align}
		\noindent
		(f_2)_\#(a)&=(a_2^kb_2^l) T((f_1)_\#(a^{-1})) (a_2^kb_2^l)^{-1},\label{eq:f2(a)=f1(a-1) Toro}	\\
		(f_2)_\#(b)&=(a_2^kb_2^l) T((f_1)_\#(b)) (a_2^kb_2^l)^{-1}.\label{eq:f2(b)=f1(b) Toro}
	\end{align}		
	
	Note that $f_1$ and $f_2$ are maps from the Torus to the Klein bottle, which are both $K(\pi,1)$-spaces, hence the maps are completely determined by their induced homomorphism.
	There exists a classification of maps from the Torus to the Klein bottle, see for example \cite[Lemma 2.1]{gonccalves2008coincidence} or \cite[Propostition 1.1.]{gonccalves2022borsuk}
	
	\begin{lem}\cite{gonccalves2008coincidence} or \cite{gonccalves2022borsuk}\label{Classification maps T->K}
		There are four types of homotopy classes (with basepoints) $[f]\in [\mathbb T, \mathbb K]$ defined by the following induced homomorphisms from $\pi_1(\mathbb T)$ to $\pi_1(\mathbb K)$, where $r_1, r_2, s_1, s_2\in \mathbb Z$.
		\begin{multicols}{4}
			
			Type 1:
			\begin{align*}
				f_\#: a&\mapsto \alpha^{r_1}\beta^{2s_1+1}\\
				b&\mapsto \beta^{2s_2},
			\end{align*}
			
			Type 2: 
			\begin{align*}
				f_\#: a&\mapsto \alpha^{r_1}\beta^{2s_1+1}\\
				b&\mapsto \alpha^{r_1}\beta^{2s_2+1},
			\end{align*}
			
			Type 3: 
			\begin{align*}
				f_\#: a&\mapsto \beta^{2s_1}\\
				b&\mapsto \alpha^{r_2}\beta^{2s_2+1},
			\end{align*}
			
			Type 4: 
			\begin{align*}
				f_\#: 
				a&\mapsto \alpha^{r_1}\beta^{2s_1}\\
				b&\mapsto \alpha^{r_2}\beta^{2s_2}.
			\end{align*}
		\end{multicols}
	\end{lem}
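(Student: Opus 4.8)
The plan is to reduce the classification of pointed homotopy classes $[f]\in[\mathbb T,\mathbb K]$ to a purely algebraic enumeration of the homomorphisms $\pi_1(\mathbb T)\to\pi_1(\mathbb K)$. Since both the torus $\mathbb T$ and the Klein bottle $\mathbb K$ are closed aspherical surfaces, they are $K(\pi,1)$-spaces, exactly as was already noted above for the lift factors $f_1,f_2$. Hence the assignment $[f]\mapsto f_\#$ is a bijection between the set of pointed homotopy classes $[\mathbb T,\mathbb K]$ and $\mathrm{Hom}(\pi_1(\mathbb T),\pi_1(\mathbb K))$, so it suffices to list all such homomorphisms; the four stated types will then be read off as the four cases of that list.

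First I would fix the presentations $\pi_1(\mathbb T)=\langle a,b:ab=ba\rangle$ and $\pi_1(\mathbb K)=\langle\alpha,\beta:\alpha\beta\alpha=\beta\rangle$ and record the conjugation rule in $\pi_1(\mathbb K)$. From $\alpha\beta\alpha=\beta$ one obtains $\beta\alpha\beta^{-1}=\alpha^{-1}$, hence $\beta^{q}\alpha^{r}\beta^{-q}=\alpha^{(-1)^{q}r}$, every element has a unique normal form $\alpha^{m}\beta^{n}$, and the multiplication rule reads $(\alpha^{p}\beta^{q})(\alpha^{r}\beta^{s})=\alpha^{\,p+(-1)^{q}r}\beta^{\,q+s}$. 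Because $a,b$ generate $\pi_1(\mathbb T)$ freely as an abelian group, a homomorphism is precisely the datum of a pair of commuting elements $f_\#(a)=\alpha^{p}\beta^{q}$ and $f_\#(b)=\alpha^{r}\beta^{s}$.

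The core of the argument is then to solve the commutativity condition. Evaluating both products with the rule above, the $\beta$-exponents agree automatically and $f_\#(a)f_\#(b)=f_\#(b)f_\#(a)$ collapses to the single equation $p\bigl(1-(-1)^{s}\bigr)=r\bigl(1-(-1)^{q}\bigr)$. I would then split into the four parity cases for $(q,s)$: both $q,s$ even imposes no constraint (Type~4); both odd forces $p=r$ (Type~2); $q$ even and $s$ odd forces $p=0$ (Type~3); and $q$ odd with $s$ even forces $r=0$ (Type~1). Renaming the free exponents as $r_1,r_2,s_1,s_2$ and writing even $\beta$-exponents as $2s_i$ and odd ones as $2s_i+1$ yields exactly the four tabulated induced homomorphisms.

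The computations here are all routine, so I do not anticipate a genuine obstacle; the only points requiring care are the justification that pointed homotopy classes are detected by $f_\#$ (the asphericity reduction, standard for $K(\pi,1)$-targets) and the bookkeeping in the parity case analysis, to be sure the enumeration is exhaustive and that it reproduces the four stated normal forms without overlap.
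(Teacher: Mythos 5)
Your proposal is correct, but it is worth noting that the paper itself does not prove this lemma at all: it is imported verbatim as a citation (Lemma 2.1 of the coincidence paper of Gon\c calves--Kelly, alternatively Proposition 1.1 of the Borsuk--Ulam paper), so there is no internal proof to compare against. What you have written is a genuine, self-contained replacement for that citation, and the argument is sound: the $K(\pi,1)$ reduction identifies $[\mathbb T,\mathbb K]_0$ with $\mathrm{Hom}(\pi_1(\mathbb T),\pi_1(\mathbb K))$; the relation $\alpha\beta\alpha=\beta$ does give $\beta\alpha\beta^{-1}=\alpha^{-1}$, hence the multiplication rule $(\alpha^{p}\beta^{q})(\alpha^{r}\beta^{s})=\alpha^{p+(-1)^{q}r}\beta^{q+s}$ on normal forms; and the commutation constraint $p\bigl(1-(-1)^{s}\bigr)=r\bigl(1-(-1)^{q}\bigr)$ with its four parity cases reproduces exactly Types 1--4, with the types mutually exclusive because they are distinguished by the parities of the $\beta$-exponents. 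Two small points deserve care if this were to be written out fully: first, the parity case analysis classifies homomorphisms, and one should say explicitly that the bijection with based classes is what converts this into the statement about $[\mathbb T,\mathbb K]$ with basepoints (for free homotopy classes one would instead get conjugacy classes of homomorphisms, which is why the basepoint hypothesis in the lemma matters); second, your Type~2 constraint $p=r$ is precisely why the lemma writes the same exponent $r_1$ on both images, which your renaming step should make explicit. The trade-off between the two routes is the usual one: the citation keeps the paper lean and defers correctness to the literature, while your derivation makes the lemma self-contained and exposes the mechanism (commutativity in the Klein bottle group forces the vanishing or equality of $\alpha$-exponents according to parities), which is arguably more illuminating for the reader who wants to see where the four types come from.
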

	
	We use the classification of homotopy classes of maps that do not satisfy the Borsuk-Ulam Property in respect to $\delta_A$, given in \cite[Theorem 1.3.]{gonccalves2022borsuk} and quote this result in the negative in the following proposition.
	
	\begin{prop}\cite[Theorem 1.3.]{gonccalves2022borsuk}\label{not BU T->K}
		Let $[f]\in [\mathbb T, \mathbb K]$, then $f$ does not satisfy the Borsuk-Ulam Property in respect to $\delta_A$ if and only if one of the following is satisfied;
			\begin{enumerate}[I)]
				\item $f$ is of type 1 and $s_2$ is odd,
				\item $f$ is of type 3 and $s_1=0$,	
				\item $f$ is of type 4 and all of these conditions hold;
				\begin{enumerate}[i)]
					\item $r_2s_1=0$, \label{T->K:r2s1=0}
					\item $r_2\neq 0$ or $s_2\neq0$ or $s_1=0$,
					\item $s_1\neq 0$ or $s_2\neq0$ or $r_1=0$ or $r_2$ is odd.
				\end{enumerate}
			\end{enumerate}
	\end{prop}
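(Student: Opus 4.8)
The plan is to turn the Borsuk--Ulam question into a purely algebraic lifting problem and then to solve the resulting equations in the braid group $B_2(\mathbb K)$ type by type. The starting point is that $\mathbb T$, $\mathbb K$ and the configuration space $F_2(\mathbb K)$ (hence its quotient $D_2(\mathbb K)$) are all aspherical $K(\pi,1)$-spaces, since configuration spaces of a surface of non-positive Euler characteristic are aspherical; thus homotopy classes of maps between them are determined by the induced homomorphisms on fundamental groups, exactly as was used to set up Lemma~\ref{Classification maps T->K}. Combining Remark~\ref{fBU->2valued non split map} with Theorem~\ref{Phi map=>-f_1 BU} gives the key equivalence I would exploit: a class $[f]\in[\mathbb T,\mathbb K]$ fails the Borsuk--Ulam Property with respect to $\delta_A$ precisely when the pair $(f,f\circ\delta_A)$ can be deformed off the diagonal, i.e.\ when there is a homomorphism $\Phi_\#\colon \pi_1(\mathbb K)\to B_2(\mathbb K)$ with $\rho\circ\Phi_\#=\tau$, where $\tau\colon\pi_1(\mathbb K)\to\mathcal S_2$ is the map classifying the double cover $q_A$ (so $\tau(\alpha)=\mathrm{id}$ and $\tau(\beta)=(12)$, since $H_A=\langle\alpha,\beta^2\rangle$), and such that $(pr_1)_\#\circ\Phi_\#$ restricted to $H_A$ recovers $f_\#$ under the identification $a\leftrightarrow\alpha$, $b\leftrightarrow\beta^2$; the second strand $(pr_2)_\#\circ\Phi_\#$ then automatically encodes $(f\circ\delta_A)_\#$.

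First I would write $\Phi_\#(\alpha)=\hat\alpha$ and $\Phi_\#(\beta)=\hat\beta$ in the normal form of Remark~\ref{elements in B2(K)}\eqref{B2 normalform}, with $\hat\alpha$ pure (as $\tau(\alpha)=\mathrm{id}$) and $\hat\beta$ carrying a single factor $\sigma$. The requirement that $\Phi_\#$ be a homomorphism becomes the single braid relation $\hat\alpha\hat\beta\hat\alpha=\hat\beta$ coming from $\pi_1(\mathbb K)=\langle\alpha,\beta:\alpha\beta\alpha=\beta\rangle$, which I would expand using the presentation of $B_2(\mathbb K)$ in Proposition~\ref{cap-braids:Presentation B_2(minha)(theo)} together with Corollary~\ref{relations sigma^2} for the $\sigma$-conjugations. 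The constraint $(pr_1)_\#\Phi_\#|_{H_A}=f_\#$ then pins down the first-strand data of $\hat\alpha$ and $\hat\beta^2$ in terms of the parameters $r_1,r_2,s_1,s_2$ of each of the four homotopy types of Lemma~\ref{Classification maps T->K}. In this way the Borsuk--Ulam question for $[f]$ is reduced, type by type, to deciding whether the system consisting of the Klein-bottle relation and the prescribed first-strand projection admits a solution $(\hat\alpha,\hat\beta)\in B_2(\mathbb K)$.

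To extract the conditions I would use the normal-form splitting of Remark~\ref{elements in B2(K)}\eqref{B2 normalform}: an equation in $B_2(\mathbb K)$ decomposes into one equation in the $\pi_1(\mathbb K)$-factor and one in $F_2(a_2,b_2)$, and the latter I would further probe through the projection $pr$ of Remark~\ref{elements in B2(K)}\eqref{pr} onto $F_2(a_2,b_2)/\langle a_2b_2a_2b_2^{-1}\rangle\cong\pi_1(\mathbb K)$ and through abelianization, turning the nonabelian obstructions into linear conditions on $r_1,r_2,s_1,s_2$ and their parities. Necessity of conditions (I)--(III) would follow from the consistency of these projected equations, and sufficiency from exhibiting an explicit $(\hat\alpha,\hat\beta)$ in each admissible case. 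I expect the main obstacle to be exactly this step: the twisting action $\theta$ makes $\hat\alpha\hat\beta\hat\alpha=\hat\beta$ genuinely nonabelian, so the parity bookkeeping (even/odd $s_1,s_2$, together with the sign factors $(-1)^{s}$ arising because $\delta_A$ reverses the $\alpha$-direction) is delicate, and the most intricate part is disentangling the three simultaneous sub-conditions \eqref{T->K:r2s1=0}, (ii) and (iii) of type~4, where the borderline degenerate values forcing $r_2s_1=0$ require a separate analysis rather than a single uniform computation.
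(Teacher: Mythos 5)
First, a point of context: the paper does not prove Proposition \ref{not BU T->K} at all --- it is imported verbatim (restated in the negative) from \cite[Theorem 1.3]{gonccalves2022borsuk}, so there is no internal proof to compare yours against; what you have written is a plan for reproving the cited theorem. Your reduction is the right one, and it is indeed the methodology of the cited source and of this paper's own framework: combining Remark \ref{fBU->2valued non split map} with Theorem \ref{Phi map=>-f_1 BU}, the failure of the Borsuk--Ulam property for $[f]$ is equivalent to the existence of a homomorphism $\Phi_\#\colon\pi_1(\mathbb K)\to B_2(\mathbb K)$ with $\rho\circ\Phi_\#=\tau$ whose lift, projected to the first strand and restricted to $H_A$, recovers $f_\#$; since all spaces involved are aspherical this translation is faithful. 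One caveat on your gloss: failing the Borsuk--Ulam property means some single $g\in[f]$ satisfies $g(\tilde x)\neq g(\delta_A(\tilde x))$ for all $\tilde x$, i.e.\ an \emph{equivariant} deformation of the pair $(f,f\circ\delta_A)$; ``the pair can be deformed off the diagonal,'' read as deforming the two maps independently, is a Nielsen coincidence statement ($N(f,f\circ\delta_A)=0$ plus a Wecken-type realization) and is not equivalent. Your subsequent algebraic formulation is the correct one, so this is a wording issue, not a logical one.

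The genuine gap is that the argument stops exactly where the theorem begins. All of the content of conditions (I)--(III) --- that $s_2$ must be odd in type 1, that $s_1=0$ in type 3, and the three interlocking conditions of type 4 --- has to be extracted by actually solving $\hat\alpha\hat\beta\hat\alpha=\hat\beta$ in $B_2(\mathbb K)$ subject to the prescribed first-strand data, and your proposal only promises this (``necessity would follow from the consistency of the projected equations,'' ``sufficiency from exhibiting an explicit $(\hat\alpha,\hat\beta)$''). The projections to $F_2(a_2,b_2)/\langle b_2^{-1}a_2b_2a_2\rangle$ and to the abelianization yield necessary linear and parity conditions, but they are a priori not sufficient: one must show that solutions of the projected equations lift to genuine solutions in $F_2(a_2,b_2)\rtimes_\theta\pi_1(\mathbb K)$, and conversely construct explicit braid pairs in every admissible case --- this is precisely what \cite[Propositions 3.2 and 3.5]{gonccalves2022borsuk} do, as the paper notes in its final section. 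Nothing in your outline explains where, say, $r_2s_1=0$ or the parity condition on $r_2$ in type 4 actually comes from; as written, the scheme would ``prove'' any plausible-looking set of conditions equally well. In short: correct setup, same strategy as the source, but the case analysis that constitutes the theorem is missing.
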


	\begin{cor}\label{not BU T->K type 4}
		The last condition of Proposition \ref{not BU T->K} can be written as
		$f$ is of type \textit{4} and one of these conditions hold;
		\begin{enumerate}[i)]
			\item $s_1=0$ and if $s_2=0$, then $r_1=0$ or $r_2$ is odd,
			\item $r_2=0$ and $s_2\neq 0$,
			\item $r_1=r_2=s_1=s_2=0$.
		\end{enumerate}
	\end{cor}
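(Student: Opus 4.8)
The statement is a purely logical equivalence between two descriptions of the same set of integer tuples $(r_1,r_2,s_1,s_2)\in\mathbb Z^4$, so the plan is to show that the conjunction of the three type-$4$ subconditions of Proposition \ref{not BU T->K} is logically equivalent to the disjunction of conditions (i)--(iii) stated in the corollary. Since the hypothesis $r_2s_1=0$ from \eqref{T->K:r2s1=0} behaves very differently depending on the vanishing of $s_1$, the cleanest route is a case split according to whether $s_1=0$ or $s_1\neq 0$; the common prefix ``$f$ is of type $4$'' carries through unchanged, so only the remaining clauses need to be compared.

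First I would treat the case $s_1=0$. Here the subcondition $r_2s_1=0$ and the subcondition ``$r_2\neq 0$ or $s_2\neq 0$ or $s_1=0$'' are automatically satisfied, so the conjunction collapses to ``$s_2\neq 0$ or $r_1=0$ or $r_2$ is odd''. Under the standing assumption $s_1=0$ this is exactly the corollary's condition (i), which reads ``$s_1=0$ and (if $s_2=0$ then $r_1=0$ or $r_2$ is odd)''. Hence on this branch the proposition's requirements reduce precisely to (i).

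Next I would treat the case $s_1\neq 0$. Now $r_2s_1=0$ forces $r_2=0$; the subcondition ``$s_1\neq 0$ or $\dots$'' is automatic; and ``$r_2\neq 0$ or $s_2\neq 0$ or $s_1=0$'' reduces to $s_2\neq 0$, since $r_2=0$ kills the first disjunct and $s_1\neq 0$ the last. Thus the conjunction becomes ``$r_2=0$ and $s_2\neq 0$'', which is precisely the corollary's condition (ii). On this branch both (i) and (iii) of the corollary fail, as each forces $s_1=0$, so the disjunction is again captured exactly.

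The only point requiring care is verifying that the corollary's conditions (ii) and (iii) introduce no tuples lying outside the proposition's set. For this I would check that condition (iii), $r_1=r_2=s_1=s_2=0$, already satisfies $s_1=0$ together with $r_1=0$ and is therefore subsumed by (i), while condition (ii) meets the branch $s_1\neq 0$ exactly in $\{r_2=0,\ s_2\neq 0\}$ and contributes nothing new on the branch $s_1=0$. I expect this bookkeeping---confirming that the seemingly independent clauses (ii) and (iii) are correctly absorbed into the two case branches---to be the only real obstacle, since beyond the elementary logic of these integer predicates there is no geometric content to the argument.
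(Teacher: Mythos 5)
Your proof is correct and follows essentially the same route as the paper: both arguments exploit the condition $r_2s_1=0$ to run an elementary case analysis (the paper splits into $s_1=0$, then $r_2=0$ with $s_2\neq 0$, then $r_2=s_2=0$; you split disjointly on $s_1=0$ versus $s_1\neq 0$), reducing the proposition's conjunction to the corollary's disjunction on each branch. Your explicit check that conditions \textit{ii)} and \textit{iii)} add no tuples outside the proposition's set is a slightly tidier bookkeeping of the converse direction, but it is the same underlying argument.
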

	\begin{proof}
		From \textit{IIIi)} it follows that either $r_2=0$ or $s_1=0$.
		Suppose that $s_1=0$, then conditions \textit{IIIi)} and \textit{IIIii)} hold anyway and \textit{IIIiii)} holds, if and only if, whenever $s_2=0$, then $r_1=0$ or $r_2$ is odd.
		Suppose that $r_2=0$,
		if addiotionally $s_2\neq 0$, then all conditions \textit{IIIi)}, \textit{IIIii)} and \textit{IIIiii)} are satisfied.		
		If $r_2=s_2=0$, then \textit{IIIii)} implies $s_1=0$, but since $r_2=0$ is even it follows from \textit{IIIiii)} that $r_1=0$.
	\end{proof}

	\begin{prop}\label{PhiA=>f1,f2}
		Let $\phi_A:\mathbb K\multimap \mathbb K$ be a $2$-valued map of type $(A)$, with correspondent map $\Phi_A:\mathbb K\rightarrow D_2(\mathbb K)$, where the induced homomorphism $(\Phi_A)_\#$, the covering space $q_A: \tilde{\mathbb T}\rightarrow \mathbb K$, the deck transformation $\delta_A:\tilde{\mathbb T}\rightarrow \tilde{\mathbb T}$ and the lift $\hat \Phi_A=\{f_1,f_2\}:\tilde{\mathbb T}\rightarrow F_2(\mathbb K)$
		are as in Equation \eqref{Phi non-split type A} and ff.
		Let furthermore $k,l\in \mathbb Z$ be such that
		$[(f_1\circ \tilde \lambda^{-1}) *\lambda]=a_2^kb_2^l\in \pi_1(\mathbb K,p_2)$, where $\tilde \lambda$ and $\lambda$ are depicted in Figures \ref{decktransf Torus} and \ref{change-of-basepoint homomorphism p1->p2, K}.
		Then $n_1=-2s_1$ and
		\begin{align}
			m_1&=((-1)^l-1)r_1+(1-(-1)^{s_1})k\label{eq:T m1}\\
			(1-(-1)^{l+s_2})m_2&=((-1)^{l}(1+(-1)^{s_2+n_2})-(-1)^{n_2}(1+(-1)^{s_2}))r_2+(1-(-1)^{n_2})k\label{eq:T m2}
		\end{align}
		where $pr(w_i(a_2,b_2))=a_2^{m_i}b_2^{n_i}$ with $pr$ as in Remark \ref{elements in B2(K)}\eqref{pr}.
		Furthermore $f_2=f_1\circ \delta_A$  and $(f_1)_\#:\pi_1(\mathbb T,\tilde x_1)\rightarrow \pi_1(\mathbb K,p_1)$ is given by
		\begin{align}
			(f_1)_\#:a &\longmapsto a_1^{r_1}b_1^{s_1}\label{fA}\\
				b &\longmapsto a_1^{(-1)^{s_2}m_2+(1+(-1)^{n_2+s_2})r_2}b_1^{2s_2+n_2},\nonumber
		\end{align} 
		where either $(f_1)_\#$ is the trivial homomorphism or one of the following is satisfied;
		\begin{enumerate}[I)]
			\item $f$ is of type 1 and $s_2+\frac{n_2}{2}$ is odd,
			\item $f$ is of type 3 and $s_1=0$,
			\item $f$ is of type 4 and one of these conditions hold;
				\begin{enumerate}[i)]
					\item $s_1=0$ and if $n_2=-2s_2$, then $r_1=0$ or $(-1)^{s_2}m_2+(1+(-1)^{s_2})r_2$ is odd,
					\item $m_2=-(1+(-1)^{s_2})r_2$ and $2s_2+n_2\neq 0$.
				\end{enumerate}
		\end{enumerate}
	\end{prop}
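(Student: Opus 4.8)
The plan is to pin down both lift factors $f_1,f_2\colon\tilde{\mathbb T}\to\mathbb K$ through their induced homomorphisms (which is legitimate, since $\tilde{\mathbb T}$ and $\mathbb K$ are $K(\pi,1)$-spaces) and then to exploit the two independent descriptions of $(f_2)_\#$: the direct one coming from $(f_i)_\#=(pr_i)_\#\circ(\hat\Phi_A)_\#$, and the one forced by $f_2=f_1\circ\delta_A$ (which is itself Theorem \ref{Phi map=>-f_1 BU}) through Equations \eqref{eq:f2(a)=f1(a-1) Toro} and \eqref{eq:f2(b)=f1(b) Toro}. For the generator $a\leftrightarrow\alpha$ I would use that $\hat\alpha=w_1(a_2,b_2)a_1^{r_1}b_1^{s_1}$ is pure: applying $(pr_1)_\#$ of Remark \ref{rem P2 normal form pri}\eqref{pr1,pr2} (which kills $a_2,b_2$ and fixes $a_1,b_1$) gives at once $(f_1)_\#(a)=a_1^{r_1}b_1^{s_1}$, the first line of \eqref{fA}, while $(pr_2)_\#$ gives $(f_2)_\#(a)=a_2^{m_1}b_2^{n_1}a_2^{r_1}b_2^{s_1}$, reduced in $\pi_1(\mathbb K)$ via $b_2^{n}a_2b_2^{-n}=a_2^{(-1)^n}$.

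Comparing this with $(a_2^kb_2^l)\,T((f_1)_\#(a^{-1}))\,(a_2^kb_2^l)^{-1}$ coming from \eqref{eq:f2(a)=f1(a-1) Toro} is the crux of the first half. Since the $b_2$-exponent sum is invariant under conjugation, equating $b_2$-exponents forces $n_1+s_1=-s_1$, that is $n_1=-2s_1$; equating the $a_2$-exponents (now with $(-1)^{n_1}=1$) yields \eqref{eq:T m1}. The generator $b\leftrightarrow\beta^2$ is analogous but computationally heavier: I would write $\hat\beta^2=u\,(\sigma u\sigma^{-1})\,\sigma^2$ with $u=w_2(a_2,b_2)a_1^{r_2}b_1^{s_2}$, push the conjugating $\sigma$ through $u$ using relations \textit{7.}--\textit{10.} of Proposition \ref{cap-braids:Presentation B_2(minha)(theo)} together with $\sigma b_1=b_1\sigma^{-1}$ from Corollary \ref{relations sigma^2}, replace $\sigma^2$ by $b_2^{-1}a_2b_2a_2$ to land in $P_2(\mathbb K)$, and collect into normal form by $\theta$-moving as in Remark \ref{elements in B2(K)}\eqref{thetaa^rb^s}. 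Reading off the $a_1,b_1$-exponents under $(pr_1)_\#$ then gives the second line of \eqref{fA} (with $b_1$-exponent $2s_2+n_2$ and $a_1$-exponent $(-1)^{s_2}m_2+(1+(-1)^{n_2+s_2})r_2$), and imposing \eqref{eq:f2(b)=f1(b) Toro} on the $(pr_2)_\#$-image yields \eqref{eq:T m2}.

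With \eqref{fA} in hand, the classification follows from Theorem \ref{Phi map=>-f_1 BU}, which guarantees that $[f_1]$ fails the Borsuk-Ulam Property with respect to $\delta_A$. I would identify which of the four homotopy types of Lemma \ref{Classification maps T->K} the homomorphism \eqref{fA} realises, reading its parameters as $r_1,s_1$ on $a$ and as $(-1)^{s_2}m_2+(1+(-1)^{n_2+s_2})r_2$ and $s_2+\tfrac{n_2}{2}$ on $b$, and then feed these into Proposition \ref{not BU T->K} in the streamlined form of Corollary \ref{not BU T->K type 4}; the conditions I)--III) of the statement are exactly the translations of the corresponding clauses, with the trivial homomorphism accounting for the remaining degenerate possibility.

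The main obstacle is the normal-form computation of $\hat\beta^2$: relations \textit{7.}--\textit{10.} are genuinely non-abelian and mix $\sigma$ with both pairs $a_1,b_1$ and $a_2,b_2$, so the $\theta$-twisting must be tracked carefully when separating the $F_2(a_2,b_2)$-part from the $\pi_1(\mathbb K)$-part. A secondary point of care is parity: the type assigned by Lemma \ref{Classification maps T->K} depends on the parities of $s_1$ and of $2s_2+n_2$, so the identification of the $b$-parameter $s_2+\tfrac{n_2}{2}$, and hence which clause of Corollary \ref{not BU T->K type 4} applies, is only meaningful once the parity of $n_2$ is fixed.
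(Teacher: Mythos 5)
Your proposal follows essentially the same route as the paper's proof: computing $(f_i)_\#$ via the projections $(pr_i)_\#$ (after identifying $P_2(\mathbb K)$ with $F_2(a_2,b_2)\rtimes_\theta\pi_1(\mathbb K)$ through $\Theta_P$), rewriting $\hat\beta^2$ in normal form using relations \textit{7.}--\textit{10.} of Proposition \ref{cap-braids:Presentation B_2(minha)(theo)} and Corollary \ref{relations sigma^2}, equating the resulting expression for $(f_2)_\#$ with the one forced by $f_2=f_1\circ\delta_A$ via Equations \eqref{eq:f2(a)=f1(a-1) Toro} and \eqref{eq:f2(b)=f1(b) Toro}, and finally constraining $(f_1)_\#$ through Proposition \ref{not BU T->K} and Corollary \ref{not BU T->K type 4}. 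The only deviations (conjugating the second factor of $\hat\beta^2$ by $\sigma$ on the left instead of by $\sigma^{-1}$, and leaving $\Theta_P$ implicit) are cosmetic and do not change the argument.
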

	\begin{proof}
		Let $pr_i:F_2(\mathbb K)\rightarrow \mathbb K$ be the projection to the $i$-th component, for $i=1,2$, as in Remark \ref{rem P2 normal form pri}\eqref{pr1,pr2},
		then $f_i=pr_i\circ \hat \Phi_A$.
		Note that to compute the induced homomorphism of $f_1$ and $f_2$ we have to use the isomorphism $\Theta_P:F_2(a_2,b_2)\rtimes_\theta\pi_1(\mathbb K)\rightarrow P_2(\mathbb K)$ given in Proposition \ref{P_2=produto semidireto e presentacao F2 |x pi1K}, since the domain of $(pr_i)_\#$ is $P_2(\mathbb K)$.
		Hence $(f_i)_\#=(pr_i)_\#\circ \Theta_P\circ (\hat \Phi_A)_\#$, which we compute on the generators $a$ and $b$.
		The homomorphism $(\hat\Phi_A)_\#$ takes $a$ to $\hat \alpha=w_1(a_2,b_2)a_1^{r_1}b_1^{s_1}$ and $b$ to $(\hat\beta)^2=(w_2(a_2,b_2)a_1^{r_2}b_1^{s_2}\sigma)^2$ (see the beginning of this section).
		It is considerable more straightforward to compute $(f_i)_\#(a)$, hence we only show the details to compute $(f_i)_\#(b)$.
		To be able to apply $\Theta_P$ on $(\hat\beta)^2$, we have to first write it in term of elements in $F_2(a_2,b_2)\rtimes_\theta\pi_1(\mathbb K)$.
		For that purpose note that $\sigma^2= b_2^{-1} a_2 b_2 a_2$ is a word in the free group $ F_2(a_2,b_2)$.
		For this next computation we use relations \textit{6.} to \textit{10.} in Theorem \ref{cap-braids:Presentation B_2(minha)(theo)}, as well as the relation from Corollary \ref{relations sigma^2};
		\begin{align*}
			(\hat \Phi_A)_\#(b)&=w_2(a_2,b_2)a_1^{r_2}b_1^{s_2}\sigma w_2(a_2,b_2) a_1^{r_2}b_1^{s_2}\sigma\\
			&=w_2(a_2,b_2)a_1^{r_2}b_1^{s_2}\sigma^2 w_2(\sigma^{-1}a_2\sigma,\sigma^{-1}b_2\sigma)(\sigma^{-1}a_1\sigma)^{r_2}
			(\sigma^{-1} b_1\sigma)^{s_2}\\	
			&=w_2(a_2,b_2)a_1^{r_2}b_1^{s_2}\sigma^2 w_2(a_2^{-1}\sigma^2a_1,\sigma^{-2}b_2^{-1}b_1)a_1^{r_2}
			\sigma^{-1+(-1)^{s_2}}b_1^{s_2}.
		\end{align*}
		We apply $\Theta_P$ on this element 
		and obtain the following pure braid,  
		where $\hat \sigma^2= \hat b_2^{-1}\hat a_2 \hat b_2 \hat a_2$;
		\begin{equation*}
			(\Theta_P\circ (\hat \Phi_A)_\#)(b)
			=w_2(\hat a_2,\hat b_2)(\hat a_2\hat a_1)^{r_2} (\hat b_2 \hat b_1)^{s_2}\hat\sigma^2 w_2(\hat a_2^{-1}\hat\sigma^2\hat a_2\hat a_1,\hat\sigma^{-2}\hat b_1)(\hat a_2\hat a_1)^{r_2}\hat\sigma^{-1+(-1)^{s_2}}(\hat b_2\hat b_1)^{s_2}.
		\end{equation*}
		The last step to computing $(f_1)_\#(b)$ and $(f_2)_\#(b)$ is to apply $(pr_1)_\#$ and $(pr_2)_\#$, respectively, where
		we use that  $(pr_2)_\#$ takes $\hat \sigma^2$ to $1$.
		\begin{align*}
			(f_1)_\#(b) 
			&=(a_1^{r_2}b_1^{s_2})( a_1^{m_2}b_1^{n_2})( a_1^{r_2}b_1^{s_2})
			=a_1^{ (-1)^{s_2}m_2+r_2+ (-1)^{s_2+n_2}r_2}b_1^{2s_2+n_2},\\
			(f_2)_\#(b)
			&=(a_2^{m_2}b_2^{n_2})(a_2^{r_2}b_2^{s_2})(a_2^{r_2} b_2^{s_2})
			=a_2^{m_2+(-1)^{n_2}r_2+(-1)^{s_2+n_2}r_2}b_2^{2s_2+n_2}.
		\end{align*}
		In an analogous way we come to the conclusion that $(f_1)_\#$ takes $a$ to $a_1^{r_1}b_1^{s_1}$ and $(f_2)_\#$ takes $a$ to $a_2^{m_1+(-1)^{n_1}r_1}b_2^{n_1+s_1}$.
		From Proposition \ref{Phi map=>-f_1 BU} we know that $f_2=f_1\circ \delta_A$, therefore, by Equation \eqref{eq:f2(b)=f1(b) Toro}  it follows that
		\begin{align*}
			(f_2)_\#(a)&=(a_2^kb_2^l) T((f_1)_\#(a^{-1})) (a_2^kb_2^l)^{-1}= (a_2^kb_2^l) T(a_1^{r_1}b_1^{-s_1})(a_2^kb_2^l)^{-1}\\
			&=(a_2^kb_2^l) (a_2^{r_1}b_2^{-s_1})(a_2^kb_2^{-l})=a_2^{(-1)^lr_1+2k}b_2^{-s_1},
		\end{align*}		
		Analogously, using Equation \eqref{eq:f2(a)=f1(a-1) Toro} it follows that
		$(f_2)_\#(b)
		=a_2^{(-1)^l ((-1)^{s_2}m_2+r_2+ (-1)^{s_2}r_2)+2k}b_2^{2s_2}$.
		Comparing the powers of $a_2$ and $b_2$ for both $a$ and $b$ we obtain four equations, which imply $n_1=-2s_1$ and the Equations \eqref{eq:T m1} and \eqref{eq:T m2}.
		Further, from Proposition \ref{Phi map=>-f_1 BU} it follows also that $[f_1]$ does not satisfy the Borsuk-Ulam Property in respect to $\delta_A$.
		With Proposition \ref{not BU T->K} and Corollary \ref{not BU T->K type 4} it follows straightforwardly, that $(f_1)\#$ has to be as given above in Equation \eqref{fA}.
	\end{proof}	
	
	We know from Theorem \ref{N(phi)=N(q,f1)} that a way to compute the Nielsen fixed point number of $\Phi_A$ is to compute the Nielsen coincidence number of the pair $(q_A,f_1)$.
	To compute the coincidence number we use the following formula for maps from the Torus to the Klein bottle, given in \cite{gonccalves2008coincidence}.
	We only cite the cases necessary to compute $N(q_A,f_1)$, considering that $f_1$ is of type \textit{1, 3} or \textit{4} and $q_A$ is of type \textit{4}.
	\begin{prop}\cite[Lemma 2.1]{gonccalves2008coincidence}\label{Nielsen Coincidence number T->K(prop)}
		Let $f,g:\mathbb T\rightarrow \mathbb K$ be two maps, where $f$ is as in Lemma \ref{Classification maps T->K} and $g$ of type \textit{4}, where $g_\#$ maps $a$ to $\alpha^{t_1}\beta^{2v_1}$ and $b$ to $\alpha^{t_2}\beta^{2v_2}$.
		Then the Nielsen coincidence number $N(g,f)$ is given as follows
		\begin{itemize}
			\item if $f$ of type \textit{1}, then
					$N(g,f)=N(f,g)=|t_1(2v_2-2s_2)-t_2(2v_1-2s_1-1)|,$
			\item if $f$ of type \textit{3}, then
					$N(g,f)=N(f,g)=|t_1(2v_2-2s_2-1)-t_2(2v_1-2s_1)|,$
			\item\label{N(q,f)T->K: type 4} if $f$ is of type \textit{4}, then $N(g,f)=N(f,g)$ is equal to
			\begin{align*}
				|(r_1-t_1)(s_2-v_2)-(r_2-t_2)(s_1-v_1)|+|(r_1+t_1)(s_2-v_2)-(r_2+t_2)(s_1-v_1)|.
			\end{align*}
		\end{itemize}
	\end{prop}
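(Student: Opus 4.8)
The plan is to exploit that both $\mathbb T$ and $\mathbb K$ are $K(\pi,1)$-spaces, so that $f$ and $g$ are determined up to homotopy by $f_\#$ and $g_\#$ and the coincidence Nielsen number depends only on these homomorphisms. I would fix the standard actions of $\pi_1(\mathbb T)=\mathbb Z^2$ and $\pi_1(\mathbb K)=\langle\alpha,\beta\rangle$ on the common universal cover $\mathbb R^2$, with $\alpha\cdot(x,y)=(x+1,y)$ and $\beta\cdot(x,y)=(-x,y+1)$ the glide reflection, and choose affine representatives $\tilde f,\tilde g\colon\mathbb R^2\to\mathbb R^2$ of lifts realizing $f_\#,g_\#$. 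The orientation character $w\colon\pi_1(\mathbb K)\to\mathbb Z_2$, $\alpha\mapsto 0$, $\beta\mapsto 1$, then dictates the linear part of each generator's action: a generator sent to $\alpha^m\beta^n$ contributes the identity if $n$ is even and the reflection $\mathrm{diag}(-1,1)$ if $n$ is odd. This is exactly what separates the type~$4$ case (both $f_\#$ and $g_\#$ land in the orientation-preserving subgroup $\langle\alpha,\beta^2\rangle$) from types~$1$ and~$3$ (where $f$ reverses orientation along one generator).

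For type~$4$ both $f$ and $g$ lift to the orientation double cover $p\colon\mathbb T'=\mathbb R^2/\langle\alpha,\beta^2\rangle\to\mathbb K$, which is again a torus. Writing $\bar f,\bar g\colon\mathbb T\to\mathbb T'$ for the lifts, I would invoke a covering-space argument for coincidences analogous to that of Theorem~\ref{N(phi)=N(q,f1)}: since $p$ has degree two and the two lifts of $g$ compatible with a fixed lift of $f$ differ by the deck transformation $\delta$ of $p$, one has $\mathrm{Coin}(f,g)=\mathrm{Coin}(\bar f,\bar g)\sqcup\mathrm{Coin}(\bar f,\delta\circ\bar g)$ and hence $N(f,g)=N(\bar f,\bar g)+N(\bar f,\delta\circ\bar g)$, each summand an honest torus coincidence number because $\mathbb T'$ is orientable. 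The deck transformation acts on $H_1(\mathbb T')\cong\mathbb Z^2$ by $\mathrm{diag}(-1,1)$, negating $\alpha$ and fixing $\beta^2$, so with $A'=\bigl(\begin{smallmatrix} r_1 & r_2\\ s_1 & s_2\end{smallmatrix}\bigr)$ and $B'=\bigl(\begin{smallmatrix} t_1 & t_2\\ v_1 & v_2\end{smallmatrix}\bigr)$ the standard formula $N=\lvert\det(\,\cdot\,)\rvert$ for torus self-coincidences yields $\lvert\det(A'-B')\rvert+\lvert\det(A'-\mathrm{diag}(-1,1)B')\rvert$, which is precisely the two-term expression in the type~$4$ case.

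For types~$1$ and~$3$ the map $g$ still lifts to $\mathbb T'$ but $f$ does not, so I would instead work directly in $\mathbb R^2$. In the affine model the coordinate of $\tilde f$ along the reflected direction is bounded (it satisfies a functional equation of the form $u(x+1)=-u(x)$) and hence contributes nothing to the mapping degree, while the unreflected coordinate is linear; computing, for each Reidemeister class $\gamma=\alpha^m\beta^n$, the coincidence contribution of $\tilde f$ against $\gamma\tilde g$ reduces to a single $2\times2$ determinant in which the $\alpha$-exponents of $f$ occupying the reflected slot drop out. This collapses the two sheets that produced two terms in the type~$4$ case into one family and leaves the single determinant $\bigl\lvert\det\bigl(\begin{smallmatrix} t_1 & t_2\\ \ast & \ast\end{smallmatrix}\bigr)\bigr\rvert$ recorded in the statement, whose bottom row is the difference of the $\beta$-exponents of $g$ and $f$. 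The main obstacle throughout is that $\mathbb K$ is non-orientable, so the naive integer index is unavailable and one must argue with Jezierski's coincidence semi-index: concretely, I must show that on these surfaces with $\chi=0$ every essential Nielsen coincidence class is realizable by a single point of semi-index one (a Wecken-type statement), so that $N(f,g)$ equals the number of essential classes, and then that this count equals the computed determinant, i.e. that no two essential classes are related and cancel. Verifying this bookkeeping of sheets, two families in the type~$4$ case and one in types~$1$ and~$3$, together with the nonpairing of the relevant classes, is the delicate heart of the argument; the determinant computations themselves are then routine.
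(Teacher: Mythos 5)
First, a point of comparison: the paper contains no proof of this statement at all — it is quoted as \cite[Lemma 2.1]{gonccalves2008coincidence} — so your proposal can only be measured against the cited literature, not against an internal argument. On its merits, the type~4 half of your plan is structurally sound: in that case both $f_\#$ and $g_\#$ take values in the orientation-preserving subgroup $\langle\alpha,\beta^2\rangle$ of $\pi_1(\mathbb K)$, so both maps lift to the orientation double cover $\mathbb T'=\mathbb R^2/\langle\alpha,\beta^2\rangle$; the coincidence set splits as $\mathrm{Coin}(\bar f,\bar g)\sqcup\mathrm{Coin}(\bar f,\delta\circ\bar g)$, Nielsen classes neither merge nor split under this correspondence (a lifting argument you omit but which is routine), $\delta_*$ acts as $\mathrm{diag}(-1,1)$ in the basis $(\alpha,\beta^2)$ because $\beta\alpha\beta^{-1}=\alpha^{-1}$, and the torus coincidence formula $N=|\det|$ then yields exactly the two terms of the statement.

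There are, however, two genuine gaps. (1) The passage from the disjoint union of coincidence sets to $N(f,g)=N(\bar f,\bar g)+N(\bar f,\delta\circ\bar g)$ is precisely where the non-orientability of $\mathbb K$ enters, and your handling of it is off target. What is needed is that the semi-index of each Nielsen class of $(f,g)$ equals the absolute value of the integer index of the corresponding lifted class; this holds exactly because no class here is \emph{defective}, and the reason none is defective is that $g$ is of type~4: for any Nielsen self-equivalence loop $\gamma$ at a coincidence point, $f\circ\gamma\simeq g\circ\gamma$ with $g\circ\gamma$ orientation-preserving in $\mathbb K$, and the domain $\mathbb T$ is orientable, so the orientation transport is trivial. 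This observation — the crux of the whole lemma, and the reason the same mechanism also rescues types~1 and~3 — never appears in your write-up. Instead you invoke a Wecken-type realization statement ("every essential class is realizable by a single point of semi-index one"), which is both unproven and not what is needed: $N(f,g)$ is by definition the number of classes with nonzero semi-index, so realizability is irrelevant to the formula, while the actual task, computing the semi-index of every Reidemeister class, is left undone. (2) For types~1 and~3 you offer a heuristic, not an argument: "the reflected coordinate is bounded and contributes nothing to the mapping degree" is never converted into an enumeration of Reidemeister classes or a computation of their semi-indices, and the assertion that the $\alpha$-exponents of $f$ "drop out" is exactly the claim to be proved. As it stands, the proposal is a plausible outline for type~4 and a sketch elsewhere; it does not yet prove the lemma.
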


	\begin{prop}\label{Nielsennumber A}
		Let $\phi:K\multimap K$ of type $(A)$, i.e. $(\Phi_A)_\#$ maps $\alpha$ to $w_1(a_2,b_2)a_1^{r_1}b_1^{s_1}$ and $\beta$ to $w_2(a_2,b_2)a_1^{r_2}b_1^{s_2}\sigma$.
		Let $n_2=|w_2|^{b_2}$ be the sum of exponents of $b_2$ in $w_2(a_2,b_2)$. Then the Nielsen fixed point number of $\phi$ is given as
		\begin{align*}
			N(\phi)=
			\begin{cases}
				|r_1|\cdot|2s_2+n_2-2|& \text{ if } s_1,n_2 \text{ even and } r_1\neq 0 \\
				|2s_2+n_2-2| & \text{ otherwise.}
			\end{cases}
		\end{align*}
	\end{prop}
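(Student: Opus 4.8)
The plan is to apply Theorem~\ref{N(phi)=N(q,f1)}, which reduces $N(\phi)$ to the Nielsen coincidence number $N(q_A,f_1)$ of the covering map $q_A:\tilde{\mathbb T}\to\mathbb K$ and the lift factor $f_1$. Since both are maps from the Torus to the Klein bottle, I would classify each via Lemma~\ref{Classification maps T->K} and then read off the coincidence number from Proposition~\ref{Nielsen Coincidence number T->K(prop)}. As $(q_A)_\#$ sends $a\mapsto\alpha^{1}\beta^{0}$ and $b\mapsto\alpha^{0}\beta^{2}$, the covering $q_A$ is of type \textit{4} with parameters $(t_1,v_1,t_2,v_2)=(1,0,0,1)$. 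For $f_1$ I would use the explicit homomorphism of Proposition~\ref{PhiA=>f1,f2}, namely $a\mapsto\alpha^{r_1}\beta^{s_1}$ and $b\mapsto\alpha^{P}\beta^{Q}$ with $Q=2s_2+n_2$, together with the fact from that same proposition that $f_1$ is of type \textit{1}, \textit{3} or \textit{4} (or trivial). The structural point is that the type is decided by the parities of the $\beta$-exponents $s_1$ and $Q$ (hence of $n_2$): $s_1$ odd with $n_2$ even gives type \textit{1}, $s_1$ even with $n_2$ odd gives type \textit{3}, and $s_1,n_2$ both even gives type \textit{4}. The remaining combination, $s_1$ and $n_2$ both odd, would force type \textit{2}, which always has the Borsuk--Ulam Property and is therefore excluded by Theorem~\ref{Phi map=>-f_1 BU}.

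For types \textit{1} and \textit{3} I would substitute $(t_1,v_1,t_2,v_2)=(1,0,0,1)$ into the corresponding lines of Proposition~\ref{Nielsen Coincidence number T->K(prop)}. Expressing the type-specific exponent $S_2$ of $f_1$ through $2S_2=Q$ in type \textit{1} and $2S_2+1=Q$ in type \textit{3}, the two formulas collapse to $|2S_2-2|=|2s_2+n_2-2|$ and $|1-2S_2|=|2s_2+n_2-2|$ respectively. In particular the value is independent of $r_1$; and since type \textit{1} forces $s_1$ odd while type \textit{3} forces $n_2$ odd, neither meets the hypothesis of the first branch, so both land in the ``otherwise'' branch, as required.

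The type \textit{4} case is the crux. Here I would insert the type-\textit{4} parameters $R_1=r_1$, $S_1=s_1/2$, $R_2=P$, $S_2=s_2+n_2/2$ and $(t_1,v_1,t_2,v_2)=(1,0,0,1)$ into the type-\textit{4} line of Proposition~\ref{Nielsen Coincidence number T->K(prop)}. Writing $u=S_2-1$, $w=R_2S_1$ and $A=R_1u-w$, the two summands become $|A-u|$ and $|A+u|$, so their sum equals $2\max\{|A|,|u|\}$ by the elementary identity $|A-u|+|A+u|=2\max\{|A|,|u|\}$. The decisive step is then to invoke the non-Borsuk--Ulam conditions of Proposition~\ref{PhiA=>f1,f2}\,III): either $s_1=0$, whence $S_1=0$, or $m_2=-(1+(-1)^{s_2})r_2$, which forces $P=R_2=0$; in both situations the cross term $w=R_2S_1$ vanishes. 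Hence $A=R_1u$ and $N(q_A,f_1)=2|u|\max\{|r_1|,1\}=|2s_2+n_2-2|\cdot\max\{|r_1|,1\}$, which equals $|r_1|\,|2s_2+n_2-2|$ when $r_1\neq0$ and $|2s_2+n_2-2|$ when $r_1=0$. Together with the parity regime $s_1,n_2$ even of type \textit{4}, this reproduces exactly the two branches of the stated formula, the trivial homomorphism being the degenerate type-\textit{4} instance with all parameters zero (giving $N=2=|2s_2+n_2-2|$).

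The main obstacle I anticipate is the type-\textit{4} computation: one must recognize that the non-Borsuk--Ulam conditions supplied by Proposition~\ref{PhiA=>f1,f2} are precisely what annihilate the cross term $R_2S_1$, after which the $\max$-identity delivers the clean product form. The parity bookkeeping that assigns each map to a single type (and rules out type \textit{2}) is routine but must be carried out carefully so that the case split of the proposition is both exhaustive and non-overlapping.
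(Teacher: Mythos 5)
Your proposal is correct and follows essentially the same route as the paper's proof: reduce to $N(q_A,f_1)$ via Theorem~\ref{N(phi)=N(q,f1)}, recognize $q_A$ as type \textit{4} with $(t_1,v_1,t_2,v_2)=(1,0,0,1)$, classify $f_1$ through Proposition~\ref{PhiA=>f1,f2}, and evaluate Proposition~\ref{Nielsen Coincidence number T->K(prop)}, with the non-Borsuk--Ulam conditions annihilating the cross term in the type-\textit{4} case. The only cosmetic differences are that you use the identity $|A-u|+|A+u|=2\max\{|A|,|u|\}$ where the paper directly evaluates $|r_1-1|+|r_1+1|$, and you spell out the parity bookkeeping and the trivial-homomorphism case that the paper treats implicitly.
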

	
	\begin{proof}
		We know from Theorem \ref{N(phi)=N(q,f1)} that the Nielsen fixed point number $N(\phi)$ of $\phi$ is equal to the Nielsen coincidence number $N(q_A,f_1)$ of the pair $(q_A,f_1)$.
		Note that $(q_A)_\#$ maps $a$ to $\alpha$ and $b$ to $\beta^2$, i.e. $q_A$ is of type \textit{4} and it holds that $t_1=v_2=1$ and $t_2=v_1=0$.
		The induced homomorphism $(f_1)_\#$ is given as in Proposition \ref{PhiA=>f1,f2}, hence is of type \textit{1}, \textit{3} or \textit{4}.
		Let $f_1$ be of type \textit{4}, hence $s_1$ and $n_2$ are both even.
		Then setting all variables (taking care with the different notations) in the third formula of Proposition \ref{Nielsen Coincidence number T->K(prop)} we obtain, using \textit{\ref{T->K:r2s1=0})} in Proposition \ref{not BU T->K}, the following formula;
		\begin{align*}
			N(q_A,f_1)&=|(r_1-1)(s_2+\frac{n_2}{2}-1)-0|+|(r_1+1)(s_2+\frac{n_2}{2}-1)-0|\\
			&=|s_2+\frac{n_2}{2}-1|\cdot(|r_1-1|+|r_1+1|).
		\end{align*}
		Observe that if $r_1=0$, then $|r_1-1|+|r_1+1|=2$ and consequently $N(q_A,f_1)=|2s_2+n_2-2|$.
		If $r_1\neq 0$ it is straightforward to show that $|r_1-1|+|r_1+1|=2|r_1|$ and therefore $N(q_A,f_1)=|r_1|\cdot|2s_2+n_2-2|$.
		For the other cases, i.e. for $f_1$ of type \textit{1} or \textit{3}, 
		use the formulas given in Proposition \ref{Nielsen Coincidence number T->K(prop)} analogously.
		Recall, if $f_1$ is of type \textit{3}, to think of $2s_2+n_2$ as $2(s_2+\frac{n_2-1}{2})+1$ in order for it to be in the form given in Lemma \ref{Classification maps T->K}.
		In both cases we obtain that $N(q_A,f_1)=|2-2s_2-n_2|=|2s_2+n_2-2|$.
	\end{proof}


    \subsection{Maps of type (B)}
		Analogously as in the last section for maps of type $(A)$, we compute the lift factors and Nielsen number for maps of type $(B)$.
		Let $\phi_B:\mathbb K \multimap \mathbb K$ be a $2$-valued map of type $(B)$, where the induced homomorphism of the correspondent map $\Phi_B$ is given as in Equation \eqref{Phi non-split} with $k_1=1$, i.e.
		\begin{align}
			(\Phi_{B_k})_\#:
			\alpha &\longmapsto\hat\alpha=w_1(a_2,b_2)a_1^{r_1}b_1^{s_1}\sigma\label{Phi non-split type B}\\
			\beta &\longmapsto\hat\beta=w_2(a_2,b_2)a_1^{r_2}b_1^{s_2}\sigma^{k},\nonumber
		\end{align}
		where $r_1,r_2,s_1,s_2\in \mathbb N$, $pr(w_i)=a_2^{m_i}b_2^{n_i}$, for $i=0,1$, with $pr$ as in Remark \ref{elements in B2(K)}\textit{\ref{pr})}.
		Furthermore $k$ is either $0$ or $1$ and we divide it further in two cases, depending on whether $k$ is $0$ or $1$ we say $\phi_B$ is of type $(B_0)$ or $(B_1)$, respectively.
		
		We start with maps type $(B_0)$ and subsequently show how maps of type $(B_1)$ can be derived from maps of type $(B_0)$ through a homeomorphism.
		Again we follow the construction described in Section \ref{sec:2valued maps &BU} and start by computing a presentation for $H_{B_0}=ker \theta = \Phi_{B_0}^{-1}(P_2(\mathbb K))$ with the Reidemeister-Schreier method.
		We obtain $H_{B_0}=\langle \alpha^2, \beta:\alpha^2(\alpha\beta)\alpha^2=\alpha\beta\rangle$, which is isomorphic to the fundamental group of a Klein bottle $\tilde {\mathbb K}$, where $\pi_1(\tilde {\mathbb K})=\langle a,b:aba=b\rangle,$ identifying $\alpha^2$ with $a$ and $\beta$ with $b$.
 		The covering map $q_{B_0}:\tilde {\mathbb K}_B\rightarrow \mathbb K$ takes $a$ to $\alpha^2$ and $b$ to $\beta$.
 		The unique non-trivial deck transformation $\delta_{B_0}:\tilde {\mathbb K}\rightarrow \tilde {\mathbb K}$ lifts to a homeomorphism $\hat \delta_{B_0}:\mathbb R^2\rightarrow \mathbb R^2$ such that $\hat \delta_{B_0}(x,y)=(x+\frac{1}{2},y)$.
 		Let $\hat \Phi_{B_0}=\{f_1, f_2\}:\tilde {\mathbb K}\rightarrow F_2(\mathbb K)$ be such that $(\hat \Phi_{B_0})_\#$
 		takes $a$ to $(w_1(a_2,b_2)a_1^{r_1}b_1^{s_1}\sigma)^2$ and $b$ to $w_2(a_2,b_2)a_1^{r_2}b_1^{s_2}$, then $\hat \Phi_{B_0}$ is a lift of $q_{B_0}\circ\Phi_{B_0}$.

		If $\phi_{B_1}$ is of type $(B_1)$, then $H_{B_1}=\langle \alpha^2, \beta:\alpha^2\beta\alpha^2=\beta\rangle$,
		which is isomorphic to the fundamental group of the Klein bottle $\pi_1(\tilde{\mathbb K})=\langle \tilde a,\tilde b:\tilde a\tilde b\tilde a=\tilde b\rangle $, identifying $\alpha^2$ with $\tilde a$ and $\alpha\beta$ with $\tilde b$.
		The covering map $q_{B_1}:\tilde {\mathbb K}_B\rightarrow \mathbb K$ takes $a$ to $\alpha^2$ and $b$ to $\alpha\beta$.
		Let $\hat \Phi_{B_1}=\{f_1, f_2\}:\tilde {\mathbb K}\rightarrow F_2(\mathbb K)$ be such that $(\hat \Phi_{B_1})_\#$
		takes $a$ to $(w_1(a_2,b_2)a_1^{r_1}b_1^{s_1}\sigma)^2$ and $b$ to $(w_1(a_2,b_2)a_1^{r_1}b_1^{s_1}\sigma)(w_2(a_2,b_2)a_1^{r_2}b_1^{s_2}\sigma)$, then $\hat \Phi_{B_1}$ is a lift of $q_{B_1}\circ\Phi_{B_1}$.
		The unique non-trivial deck transformation $\delta_{B_1}:\tilde {\mathbb K}\rightarrow \tilde {\mathbb K}$ lifts to a homeomorphism $\hat \delta_{B_1}:\mathbb R^2\rightarrow \mathbb R^2$ taking $\hat \delta_{B_1}(x,y)=(x+\frac{1}{2},y)$.
		Note that $\delta_{B_0}$ and $\delta_{B_1}$ are the same map and we write $\delta_B$ from here on.
		
		\begin{equation*}
			\begin{tikzcd}[column sep=huge]
				\tilde {\mathbb K} \arrow{d}[swap]{q_{B_0}}\arrow{r}{id}&\tilde {\mathbb K}\arrow{r}{\hat \Phi_{B_1}}\arrow{d}{q_{B_1}}&F_2(\tilde {\mathbb K})\arrow{d}\\
				\mathbb K \arrow{r}[swap]{\Psi}&\mathbb K\arrow{r}[swap]{\Phi_{B_1}} & D_2(\mathbb K).
			\end{tikzcd}
		\end{equation*}		
		We can connect every $\Phi_{B_1}$ of type $(B_1)$ to a map of type $(B_0)$, considering the homeomorphism $\Psi:\mathbb K\rightarrow \mathbb K$, which induced isomorphism $\Psi_\#$ takes $\alpha$ to $\alpha$ and $\beta$ to $\alpha\beta$.
		We can bijectively map every $\Phi_{B_1}$ of type $(B_1)$ to a map $\Phi_{B_1}\circ\Psi$ of type $(B_0)$.
		Further, there is a lift of $\Psi\circ q_{B_0}$, which is the identity on $\tilde {\mathbb K}$.
		Since the identity is trivially $\delta_B$-equivariant (i.e. $id\circ\delta_B=\delta_B\circ id$), every result we get in relation to the Borsuk-Ulam Property for lift factors of maps of type $(B_0)$, we can also use for the lift factors of type $(B_1)$.
		Furthermore, since the lift of $\Psi\circ q_{B_0}$ is the identity, it holds that $\hat \Phi_{B_1}=\{f_1, f_2\}$, a lift of $\Phi_{B_1}\circ q_{B_1}$, is also a lift of $(\Phi_{B_1}\circ\Psi)\circ q_{B_0}$.
		In our next step we won't use that isomorphism, because in this specific case the computations are a little bit more straightforward if we continue as for maps of type $(A)$. 
		
		Consider the homomorphism $(\delta_B)_\#:\pi_1(\mathbb K, \tilde x_1)\rightarrow \pi_1(\mathbb K, \delta(\tilde x_1))$ induced by the deck transformation $\delta_B$ and consider the following figure, where we write $a$ and $b$ for the loops that represent the homotopy classes $a$ and $b$ abusing notation and $\tilde \lambda$ is the path between $(\tilde x_0)$ and $\delta_B(\tilde x_0)$ as shown in the figure below.
		\begin{figure}[h!]\label{decktransf Klein}
			\hfill
			\begin{tikzpicture}[scale=1, very thick]
				{\draw[cyan,line width=1.5pt][<-] (1.5,0.06) -- (3,0.06);} 
					\node at (2.1,0.3) {$\tilde \lambda$};
				{\draw[blue,line width=1.5pt][<<-] (0,3) -- (3,3);}
					\node at (1.5,3.3) {$a$};					
				{\draw[blue,line width=1.5pt][->>] (0,0) -- (3,0);}
				{\draw[red,line width=1.5pt][->] (0,0) -- (0,3);}
					\node at (3.3,1.5) {$b$};
				{\draw[red,line width=1.5pt][->] (3,0) -- (3,3);}
				{\draw[red, line width=1.5pt][->] (1.5,-0.05) -- (1.5,3);}
					\node at (1,1.5) {$\delta_B\circ b$};
				{\draw[fill=black] (3,0) circle  (0.5mm);}
					\node at (3.1,-0.4) {$\tilde x_1$};
				{\draw[fill=black] (1.5,0) circle (0.5mm);}
					\node at (1.5,-0.4) {$\delta_B(\tilde x_1)$};
			\end{tikzpicture}
			\hspace*{\fill} 
			\caption{$\tilde{\mathbb K}$ and deck-transformation $\delta_B$}
		\end{figure}
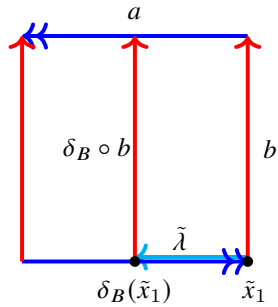
		It holds that $\delta_B\circ a$ is homotopic to $\tilde\lambda^{-1}* a *\tilde \lambda $ and  $\delta_B\circ b$ is homotopic to $\tilde\lambda^{-1}* a^{-1} * b *\tilde \lambda $, where $*$ denotes concatenation and therefore it follows from  $f_2=f_1\circ \delta$ that
		\begin{align*}
			f_2(a)&=f_1(\delta_B\circ a)\sim f_1(\tilde \lambda^{-1}* a * \tilde \lambda )=(f_1\circ\tilde \lambda^{-1})* (f_1\circ a) * (f_1\circ\tilde\lambda),\\
			f_2(b)&=f_1(\delta_B\circ b)\sim f_1(\tilde \lambda^{-1}* a^{-1}* b * \tilde \lambda )=(f_1\circ\tilde \lambda^{-1})* (f_1\circ a^{-1} f_1\circ b )* (f_1\circ\tilde\lambda). 
		\end{align*}
		Consider the change-of-basepoint homomorphism $T:\pi_1(\mathbb K, p_1)\rightarrow \pi_1(\mathbb K, p_2)$ induced by the shortest path between $p_1=f_1(\tilde x_0)$ and $p_2=f_2(\tilde x_1)=f_1(\delta(\tilde x_1))$, as in Figure \ref{change-of-basepoint homomorphism p1->p2, K}, then $T$ takes $a_1$ and $b_1$ to $a_2$ and $b_2$, respectively.
		Analogously to the beginning of Section \ref{maps A}, it holds that $(f_2)_\#$ takes the generators of $\pi_1(\mathbb K,p_2)$ to
		\begin{align}
			\noindent
			(f_2)_\#(a)&=(a_2^kb_2^l) T((f_1)_\#(a)) (a_2^kb_2^l)^{-1},\label{eq:f2(a)=f1(a-1) K}\\
			(f_2)_\#(b)&=(a_2^kb_2^l) T((f_1)_\#(a^{-1}b)) (a_2^kb_2^l)^{-1}\label{eq:f2(b)=f1(b) K},
		\end{align}				
		where $k,l\in \mathbb Z$ such that $[(f_1\circ \tilde \lambda^{-1}) *\lambda]=a_2^kb_2^l$.

		To analyze the lift $\hat \Phi_{B_0}=\{f_1, f_2\}$ and lift factors $f_1$ and $f_2$, recall that Proposition \ref{Phi map=>-f_1 BU} implies that the class of the map $f_1:\tilde {\mathbb K}\rightarrow \mathbb K$ does not satisfy the Borsuk-Ulam Property in respect to $\delta_B$.
		Hence we use the classification, given in \cite{gonccalves2019borsuk}, of homotopy classes of self-maps of the Klein bottle that do not have the Borsuk-Ulam Property in respect to the free involution $\delta_B$.
		
		\begin{lem}\cite[Propositions 38, 39, 35 and 7]{gonccalves2019borsuk}\label{K->K: not BU} 
			A based homotopy class $[f]\in [\mathbb K,\mathbb K]_0$ does not have the Borsuk-Ulam Property in respect to $\delta_B$ if and only if there are integers  $r,s,t\in \mathbb Z$ such that
			$f_\#:\pi_1(\mathbb K)\rightarrow\pi_1(\mathbb K)$ takes $\alpha$ to $\alpha^{r}$ and 
			$\beta$ to $ \alpha^{s} \beta^{2t+1}$.
		\end{lem}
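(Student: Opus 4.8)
Since $\mathbb K$ is aspherical, both source and target being $K(\pi,1)$'s, the based homotopy classes in $[\mathbb K,\mathbb K]_0$ are in bijection with the endomorphisms of $\pi_1(\mathbb K)=\langle\alpha,\beta:\alpha\beta\alpha=\beta\rangle$, so the first step is purely algebraic: determine all pairs $(A,B)=(f_\#(\alpha),f_\#(\beta))$ satisfying the defining relation $ABA=B$. Writing each element in the normal form $\alpha^i\beta^j$ and using $\beta\alpha=\alpha^{-1}\beta$ (equivalently $\beta^{-1}\alpha\beta=\alpha^{-1}$), a comparison of $\beta$-exponents forces the $\beta$-exponent of $A$ to vanish, so that $f_\#(\alpha)=\alpha^r$ for \emph{every} endomorphism; the remaining constraint $r\,(1+(-1)^d)=0$, where $d$ is the $\beta$-exponent of $B$, then splits the endomorphisms into exactly two families: one with $f_\#(\beta)=\alpha^s\beta^{2t}$ (even, which forces $r=0$) and one with $f_\#(\beta)=\alpha^s\beta^{2t+1}$ (odd, with $r$ free). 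The assertion of the lemma is precisely that the non-Borsuk--Ulam classes are the odd family.

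Second, I would translate the Borsuk--Ulam condition into a liftability statement using the correspondence of Remark~\ref{fBU->2valued non split map} together with Theorem~\ref{Phi map=>-f_1 BU}. By definition $[f]$ fails the Borsuk--Ulam Property with respect to $\delta_B$ exactly when some representative $f'$ satisfies $f'(\tilde x)\neq (f'\circ\delta_B)(\tilde x)$ for all $\tilde x$, i.e. when the pair $(f',f'\circ\delta_B)$ lifts to the ordered configuration space $F_2(\mathbb K)$. By Remark~\ref{fBU->2valued non split map} this is equivalent to $f$ being a lift factor of a genuine $2$-valued non-split map on $\mathbb K/\delta_B\cong\mathbb K$, and by Theorem~\ref{Phi map=>-f_1 BU} such a lift, when it exists, must have second factor $f'\circ\delta_B$. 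Since $F_2(\mathbb K)$ is itself aspherical, the obstruction to the lift is controlled at the level of fundamental groups together with the compatibility with the swap on the non-trivial coset; concretely one uses $\delta_B\circ a\sim\tilde\lambda^{-1}* a *\tilde\lambda$ and $\delta_B\circ b\sim\tilde\lambda^{-1}* a^{-1}* b *\tilde\lambda$ to compute $(f\circ\delta_B)_\#$ from $f_\#$, and then tests whether the combined data assemble into a homomorphism into $B_2(\mathbb K)$ sending the deck class to a non-pure braid.

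Third, I would run the case analysis over the two families. For the odd family one constructs the lift explicitly: the homomorphism $\pi_1(\mathbb K)\to B_2(\mathbb K)$ built from $f_\#(\alpha)=\alpha^r$, $f_\#(\beta)=\alpha^s\beta^{2t+1}$ and the deck datum is realizable with a non-pure value on the generator crossing the quotient, producing a coincidence-free representative and hence the failure of the Borsuk--Ulam Property. For the even family (where necessarily $f_\#(\alpha)=1$) one shows instead that every representative coincides somewhere with its $\delta_B$-translate, i.e. that the minimal number of coincidences of $(f,f\circ\delta_B)$ is positive. Assembling the two directions recovers \cite[Propositions 38, 39, 35 and 7]{gonccalves2019borsuk}.

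The hard part is the positive Borsuk--Ulam direction for the even family. There the primary, degree-type coincidence obstruction typically vanishes, so a naive count is inconclusive; one must instead invoke the full coincidence obstruction theory for maps into a non-orientable surface, where the relevant invariant is a coincidence \emph{semi-index} rather than an index and where orientation-reversing loops must be tracked carefully. Showing that this secondary invariant is non-zero throughout the even family, and zero throughout the odd family, is exactly the delicate computation carried out in \cite{gonccalves2019borsuk}, and it is what makes the clean parity criterion of the lemma non-obvious.
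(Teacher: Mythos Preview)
The paper does not supply its own proof of this lemma; it is quoted from \cite[Propositions~38, 39, 35 and~7]{gonccalves2019borsuk} and used as a black box input to Propositions~\ref{PhiB0=>f1,f2} and~\ref{PhiB1=>f1,f2}. There is therefore no in-paper argument to compare your sketch against.

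That said, your outline tracks the strategy of the cited source reasonably well. The algebraic classification of endomorphisms of $\pi_1(\mathbb K)$ in your first paragraph is correct and is exactly how the two families emerge. Your second step---recasting failure of the Borsuk--Ulam Property as the existence of an equivariant lift to $F_2(\mathbb K)$, equivalently of a suitable homomorphism into $B_2(\mathbb K)$ hitting a non-pure element---is precisely the translation used in \cite{gonccalves2019borsuk} (their Proposition~7). The one place your sketch diverges is the even-family direction: the cited reference settles that case not through a coincidence semi-index computation but by a direct algebraic obstruction in $B_2(\mathbb K)$, showing by explicit manipulation of the braid relations that the required system of equations has no solution. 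Your semi-index route is plausible in spirit, but it is not the argument actually given there, and since the primary coincidence invariants vanish in that family (as you yourself note), it would need its own independent verification rather than an appeal to the reference.
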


		In the next two proposition we compute the (based) homotopy classes of $f_1$ (respectively of $f_2$) for maps of type $(B_0)$ and $(B_1)$ respectively.	
		\begin{prop}\label{PhiB0=>f1,f2}
			Let $\phi_{B_0}:\mathbb K\multimap \mathbb K$ be a $2$-valued map of type $(B_0)$, with the correspondent map $\Phi_{B_0}:\mathbb K\rightarrow D_2(\mathbb K)$, which takes $\alpha$ to $\hat \alpha=w_1(a_2,b_2)a_1^{r_1}b_1^{s_1}\sigma$ and $\beta$ to $\hat\beta=w_2(a_2,b_2)a_1^{r_2}b_1^{s_2}$,
			where $r_1,r_2,s_1,s_2\in \mathbb N$, $pr(w_i)=a_2^{m_i}b_2^{n_i}$, for $i=0,1$, with $pr$ as in Remark \ref{elements in B2(K)}\textit{\ref{pr})}.
			Let $q_{B_0}:\tilde{\mathbb K}\rightarrow \mathbb K$, $\hat\Phi_{B_0}=\{f_1, f_2\}$ and $\delta_B:\tilde{\mathbb K}\rightarrow \tilde{\mathbb K}$ be as in the beginning of this section, i.e. as in Equation \eqref{Phi non-split type B} and ff.
			Let $[(f_1\circ \tilde \lambda^{-1})*\lambda]=a_2^kb_2^l\in \pi_1(\mathbb K, p_2)$, where $\tilde \lambda$ is as in Figure \ref{decktransf Klein} and $\lambda$ as in Figure \ref{change-of-basepoint homomorphism p1->p2, K}.
			Then $s_2$ is odd, $n_1=-2s_1$ and it holds
			\begin{align}
				(1-(-1)^{s_1+l})m_1&=((-1)^l-1)(1+(-1)^{s_1})r_1,\label{eqB0 m1}\\
				m_2&=-(-1)^{s_1+l}m_1-(-1)^l(1+(-1)^{s_1})r_1+2k. \label{eqB0 m2}
			\end{align}
			Furthermore	$f_2=f_1\circ \delta_B$ and	$(f_1)_\#:\pi_1(\tilde {\mathbb K})\rightarrow \pi_1(\mathbb K)$ is given by
			\begin{align}\label{sec-Nn-betapure:Phi<=>f1: f1(eq in prop)}
				(f_1)_\#:a &\longmapsto a_1^{(-1)^{s_1}m_1+(1+(-1)^{s_1})r_1},\\
				b &\longmapsto a_1^{r_2}b_1^{s_2}\nonumber.
			\end{align}
		\end{prop}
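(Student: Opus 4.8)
The plan is to mirror the computation carried out for type $(A)$ in Proposition~\ref{PhiA=>f1,f2}: I would first obtain the induced homomorphisms of the two lift factors explicitly, and then feed them into the Borsuk--Ulam classification of Lemma~\ref{K->K: not BU} together with the deck-transformation relations \eqref{eq:f2(a)=f1(a-1) K} and \eqref{eq:f2(b)=f1(b) K}. Writing $f_i=pr_i\circ\hat\Phi_{B_0}$, I compute $(f_i)_\#=(pr_i)_\#\circ\Theta_P\circ(\hat\Phi_{B_0})_\#$, using $\Theta_P$ from Proposition~\ref{P_2=produto semidireto e presentacao F2 |x pi1K} and the projections of Remark~\ref{rem P2 normal form pri}\eqref{pr1,pr2}. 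The key structural observation is that $(pr_1)_\#\circ\Theta_P$ is exactly the projection $F_2(a_2,b_2)\rtimes_\theta\pi_1(\mathbb K)\to\pi_1(\mathbb K)$ that annihilates $a_2,b_2$ and fixes $a_1,b_1$, while $(pr_2)_\#\circ\Theta_P$ maps both $a_1,a_2\mapsto a_2$ and $b_1,b_2\mapsto b_2$ into the Klein bottle group $\langle a_2,b_2\rangle$. In particular $(f_1)_\#(b)=a_1^{r_2}b_1^{s_2}$ is immediate, since $\hat\beta=w_2(a_2,b_2)a_1^{r_2}b_1^{s_2}$ is already pure and $pr_1$ kills $w_2$.

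The heart of the argument is the evaluation on $a$, where $(\hat\Phi_{B_0})_\#(a)=(w_1(a_2,b_2)a_1^{r_1}b_1^{s_1}\sigma)^2$. First I would rewrite this square in the normal form of $F_2(a_2,b_2)\rtimes_\theta\pi_1(\mathbb K)$ by conjugating the second factor through $\sigma$, exactly as in the type $(A)$ computation, using relations \emph{6.}--\emph{10.} of Proposition~\ref{cap-braids:Presentation B_2(minha)(theo)} together with Corollary~\ref{relations sigma^2}, so that $\sigma^{-1}a_2\sigma=a_2^{-1}\sigma^2a_1$, $\sigma^{-1}b_2\sigma=\sigma^{-2}b_2^{-1}b_1$, $\sigma^{-1}a_1\sigma=a_1$ and $(\sigma^{-1}b_1\sigma)^{s_1}=\sigma^{-1+(-1)^{s_1}}b_1^{s_1}$. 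Every surviving power of $\sigma$ is then even, hence equals a power of $\sigma^2=b_2^{-1}a_2b_2a_2\in F_2(a_2,b_2)$, so the braid is pure and both projections apply termwise. Applying $(pr_1)_\#\circ\Theta_P$ collapses all $F_2$-material and leaves the $\pi_1(\mathbb K)$-word $a_1^{r_1}b_1^{s_1}\,a_1^{m_1}b_1^{n_1}\,a_1^{r_1}b_1^{s_1}$, which after the relation $b_1^{t}a_1=a_1^{(-1)^{t}}b_1^{t}$ becomes $a_1^{\,r_1+(-1)^{s_1}m_1+(-1)^{s_1+n_1}r_1}\,b_1^{\,2s_1+n_1}$; applying $(pr_2)_\#\circ\Theta_P$ produces the analogous word in $\langle a_2,b_2\rangle$.

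Next I would invoke Theorem~\ref{Phi map=>-f_1 BU}, which gives $f_2=f_1\circ\delta_B$ and tells us that $[f_1]$ fails the Borsuk--Ulam Property with respect to $\delta_B$. By Lemma~\ref{K->K: not BU} the homomorphism $(f_1)_\#$ must send $\alpha$ to a power of $\alpha$ alone and $\beta$ to some $\alpha^{s}\beta^{2t+1}$. Matching this against the computed $(f_1)_\#(a)$ forces the $b_1$-exponent $2s_1+n_1$ to vanish, i.e.\ $n_1=-2s_1$, whereupon $(-1)^{s_1+n_1}=(-1)^{s_1}$ collapses the $a_1$-exponent to $(-1)^{s_1}m_1+(1+(-1)^{s_1})r_1$, which is \eqref{sec-Nn-betapure:Phi<=>f1: f1(eq in prop)}; matching it against $(f_1)_\#(b)=a_1^{r_2}b_1^{s_2}$ forces $2t+1$ to be odd, i.e.\ $s_2$ is odd.

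Finally, the relations \eqref{eqB0 m1} and \eqref{eqB0 m2} come from comparing the two descriptions of $(f_2)_\#$: the direct one via $(pr_2)_\#\circ\Theta_P$ and the conjugated one via \eqref{eq:f2(a)=f1(a-1) K} and \eqref{eq:f2(b)=f1(b) K}, where $T$ sends $a_1,b_1$ to $a_2,b_2$ and the conjugating element is $a_2^{k}b_2^{l}$. Equating the $a_2$-exponents on $a$ yields $(1-(-1)^{s_1+l})m_1=((-1)^l-1)(1+(-1)^{s_1})r_1$, i.e.\ \eqref{eqB0 m1}, and equating them on $b$ yields \eqref{eqB0 m2}. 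The main obstacle throughout is precisely the braid bookkeeping in the second step: the $\sigma$-conjugations in relations \emph{9.} and \emph{10.} reinject the generators $a_1,b_1$ and the letters $\sigma^{\pm2}$ back into $w_1$, so one must track carefully which contributions survive each projection and then reduce the resulting words in the non-abelian groups $\pi_1(\mathbb K)$ and $\langle a_2,b_2\rangle$ via $b^{t}a=a^{(-1)^{t}}b^{t}$. The Borsuk--Ulam constraint is then what forces the unwanted $b$-exponents to vanish and pins down $n_1$ together with the parity of $s_2$.
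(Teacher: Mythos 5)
Your proposal is correct and follows essentially the same route as the paper's own proof: the paper likewise computes $(f_i)_\#=(pr_i)_\#\circ\Theta_P\circ(\hat\Phi_{B_0})_\#$ (deferring the normal-form manipulation of $(\hat\alpha)^2$ to the type-$(A)$ computation of Proposition \ref{PhiA=>f1,f2}), then invokes Theorem \ref{Phi map=>-f_1 BU} together with Lemma \ref{K->K: not BU} to force $n_1=-2s_1$ and $s_2$ odd, and finally compares the directly computed $(f_2)_\#$ with the expression obtained from $f_2=f_1\circ\delta_B$ via \eqref{eq:f2(a)=f1(a-1) K}--\eqref{eq:f2(b)=f1(b) K}, which yields $n_2=0$ and Equations \eqref{eqB0 m1}--\eqref{eqB0 m2}. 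One caution on your phrase ``the analogous word'': under $(pr_2)_\#\circ\Theta_P$ the outcome is \emph{not} the mirror image of the $(pr_1)$-word, since there the initial copy of $w_1$ survives as $a_2^{m_1}b_2^{n_1}$ while the $\sigma$-conjugated copy dies (both $a_2^{-1}\sigma^2a_1$ and $\sigma^{-2}b_2^{-1}b_1$ project trivially into $\langle a_2,b_2: b_2^{-1}a_2b_2a_2\rangle$), giving $(f_2)_\#(a)=a_2^{m_1+((-1)^{n_1}+(-1)^{s_1+n_1})r_1}b_2^{2s_1+n_1}$ as in the paper's Equation \eqref{K->K:B0 f2}; this exponent placement (rather than $(-1)^{s_1}m_1+\cdots$) is exactly what makes the comparison produce \eqref{eqB0 m1} in the stated form, so the bookkeeping you flag as the main obstacle is indeed where the two projections behave asymmetrically.
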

		\begin{proof}
			Analogous to the proof of Proposition \ref{PhiA=>f1,f2} it holds that $f_i=pr_i\circ \hat \Phi_{B_0}:\tilde {\mathbb K}\rightarrow \mathbb K$,
			where and $pr_i:F_2(\mathbb K)\rightarrow \mathbb K$, the projection to the $i$-th component as in Remark \ref{rem P2 normal form pri}\textit{\ref{pr1,pr2})}.
			Then $(f_i)_\#=(pr_i)_\#\circ \Theta_P \circ (\hat \Phi_{B_0})_\#,$ where $\Theta_P:F_2(a_2,b_2)\rtimes\pi_1(\mathbb K)\rightarrow P_2(\mathbb K)$ is the isomorphism given in Proposition \ref{P_2=produto semidireto e presentacao F2 |x pi1K}.
			Making the computation analogously to the proof of Proposition \ref{PhiA=>f1,f2} we obtain
			\begin{multicols}{2}
				\noindent
				\begin{align*}
					(f_1)_\#:
					a &\longmapsto a_1^{(-1)^{s_1}m_1+(1+(-1)^{n_1+s_1})r_1}b_1^{2s_1+n_1}\\
					b &\longmapsto a_1^{r_2}b_1^{s_2}
				\end{align*}
				
				\noindent
				\begin{align}
					(f_2)_\#:
					a &\longmapsto a_2^{m_1+((-1)^{n_1}+ (-1)^{s_1+n_1})r_1}b_2^{2s_1+n_1}\nonumber\\
					b &\longmapsto a_2^{m_2+(-1)^{n_2}r_2}b_2^{s_2+n_2}.\label{K->K:B0 f2}
				\end{align}
			\end{multicols}
			
			Since, by Theorem \ref{Phi map=>-f_1 BU},it holds that $[f_1]$ does not have the Borsuk-Ulam Property in respect to $\delta_B$ it follows from Lemma \ref{K->K: not BU} that $n_1=-2s_1$ and $s_2$ is odd.
			Theorem \ref{Phi map=>-f_1 BU} states also that $f_2=f_1\circ\delta_B$, which, by Equations \eqref{eq:f2(a)=f1(a-1) K} and \eqref{eq:f2(b)=f1(b) K} implies that
			\begin{align*}
				(f_2)_\#(a)
					&=a_2^{({-1})^{s_1+l}m_1+(-1)^{l}(1+(-1)^{s_1})r_1} \\
				(f_2)_\#(b)
					&=a_2^{-(-1)^{s_1+l}m_1-(-1)^l(1+(-1)^{s_1})r_1+r_2+2k}b_2^{s_2}.
			\end{align*}		
			Comparing these identities with $(f_2)_\#$ given in Equation \eqref{K->K:B0 f2} implies that $n_2=0$ and 
			Equations \eqref{eqB0 m1} and \eqref{eqB0 m2} hold.
		\end{proof}		
		
		\begin{prop}\label{PhiB1=>f1,f2}
			Let $\phi_{B_1}:\mathbb K\multimap \mathbb K$ be a $2$-valued (non-split) map of type $(B_1)$, with the correspondent map $\Phi_{B_1}:\mathbb K\rightarrow D_2(\mathbb K)$, which induced homomorphism takes $\alpha$ to $w_1(a_2,b_2)a_1^{r_1}b_1^{s_1}\sigma$ and $\beta$ to $w_2(a_2,b_2)a_1^{r_2}b_1^{s_2}\sigma$,
			where $r_1,r_2,s_1,s_2\in \mathbb N$ and $pr(w_i)=a_2^{m_i}b_2^{n_i}$, where $pr$ is as in Remark \ref{elements in B2(K)}\textit{\ref{pr})}.
			Let $q_{B_1}:\tilde{\mathbb K}\rightarrow \mathbb K$, $\hat\Phi_{B_1}=\{f_1, f_2\}$ and $\delta_B:\tilde{\mathbb K}\rightarrow \tilde{\mathbb K}$ be as in the beginning of this section.
			Additionally, let $k,l\in \mathbb Z$ such that
			$[(f_1\circ \tilde \lambda^{-1}) *\lambda]=a_2^kb_2^l\in \pi_1(\mathbb K,p_2)$, where $\tilde \lambda$ and $\lambda$ are depicted in Figures \ref{decktransf Klein} and \ref{change-of-basepoint homomorphism p1->p2, K}, respectively.
			Then $s_2-s_1$ is odd, $n_1=n_2=-2s_1$ and the following Equation hold;
			\begin{align}
				(1-(-1)^{s_1+l})m_1&=((-1)^l-1)(1+(-1)^{s_1})r_1,\label{eqB1 m1}\\
				m_2&=((-1)^{s_1+l}+1)m_1+((-1)^{s_1+l}+1)r_1+((-1)^{l}-1)r_2-(-1)^l2k. \label{eqB1 m2}
			\end{align}
			Furthermore	$f_2=f_1\circ \delta_B$ and	$(f_1)_\#:\pi_1(\tilde {\mathbb K})\rightarrow \pi_1(\mathbb K)$ is given by
			\begin{align*}
				(f_1)_\#:
				a &\longmapsto a_1^{(1+(-1)^{s_1}) r_1+(-1)^{s_1} m_1}\\
				b &\longmapsto a_1^{r_1+(-1)^{ s_1} m_2+(-1)^{ s_1} r_2}b_1^{s_2-s_1}.
			\end{align*}
		\end{prop}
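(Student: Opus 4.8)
The plan is to follow the scheme of the proof of Proposition~\ref{PhiB0=>f1,f2} almost verbatim, exploiting that for a map of type $(B_1)$ the image $(\hat\Phi_{B_1})_\#(a)=(w_1a_1^{r_1}b_1^{s_1}\sigma)^2$ is the \emph{same} braid that appears in type $(B_0)$. First I would record that $f_i=pr_i\circ\hat\Phi_{B_1}$, hence $(f_i)_\#=(pr_i)_\#\circ\Theta_P\circ(\hat\Phi_{B_1})_\#$, with $pr_i$ as in Remark~\ref{rem P2 normal form pri}\eqref{pr1,pr2} and $\Theta_P$ as in Proposition~\ref{P_2=produto semidireto e presentacao F2 |x pi1K}. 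Because the image of $a$ is identical to the one in type $(B_0)$, the expressions for $(f_1)_\#(a)$ and $(f_2)_\#(a)$ may be copied from Proposition~\ref{PhiB0=>f1,f2}; in particular the $b_1$--exponent of $(f_1)_\#(a)$ equals $2s_1+n_1$.

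The only genuinely new computation is the image of $b$, which is now $\hat\alpha\hat\beta=(w_1a_1^{r_1}b_1^{s_1}\sigma)(w_2a_1^{r_2}b_1^{s_2}\sigma)$ rather than the pure braid $w_2a_1^{r_2}b_1^{s_2}$ of type $(B_0)$. I would handle it exactly as the square $(\hat\beta)^2$ was handled in Proposition~\ref{PhiA=>f1,f2}: push the interior $\sigma$ to the right so as to merge the two $\sigma$'s, obtaining $w_1a_1^{r_1}b_1^{s_1}\sigma^2\,w_2(\sigma^{-1}a_2\sigma,\sigma^{-1}b_2\sigma)a_1^{r_2}\sigma^{-1+(-1)^{s_2}}b_1^{s_2}$, and then rewrite everything as a pure braid using the conjugation relations~\textit{7.}--\textit{10.} of Proposition~\ref{cap-braids:Presentation B_2(minha)(theo)}, Corollary~\ref{relations sigma^2} and $\sigma^2=b_2^{-1}a_2b_2a_2$. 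Applying $\Theta_P$ and then $(pr_1)_\#$ and $(pr_2)_\#$ yields $(f_1)_\#(b)$ and $(f_2)_\#(b)$ in normal form. The decisive difference with type $(B_0)$ is that here $w_2$ is \emph{conjugated} by $\sigma$ before the projection, so that under $(pr_1)_\#\circ\Theta_P$ the conjugated word $w_2(\sigma^{-1}a_2\sigma,\sigma^{-1}b_2\sigma)$ collapses to $w_2(a_1,b_1)=a_1^{m_2}b_1^{n_2}$, evaluated in $\pi_1(\mathbb K)\cong F_2(a_2,b_2)/\langle b_2^{-1}a_2b_2a_2\rangle$ via the projection $pr$ of Remark~\ref{elements in B2(K)}\eqref{pr}; collecting powers in $\pi_1(\mathbb K)$ then produces $(f_1)_\#(b)$ with $a_1$--exponent involving $m_2$ and $b_1$--exponent $s_1+n_2+s_2$.

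It remains to impose the two structural constraints. By Theorem~\ref{Phi map=>-f_1 BU} the class $[f_1]$ fails the Borsuk--Ulam Property with respect to $\delta_B$, so by Lemma~\ref{K->K: not BU} the homomorphism $(f_1)_\#$ must send $a$ to a power of $a_1$ and $b$ to $a_1^{s}b_1^{2t+1}$; vanishing of the $b_1$--exponent $2s_1+n_1$ of $(f_1)_\#(a)$ gives $n_1=-2s_1$, and oddness of the $b_1$--exponent $s_1+n_2+s_2$ of $(f_1)_\#(b)$ is the remaining parity condition. Theorem~\ref{Phi map=>-f_1 BU} also gives $f_2=f_1\circ\delta_B$, so inserting the computed $(f_1)_\#$ into Equations~\eqref{eq:f2(a)=f1(a-1) K} and \eqref{eq:f2(b)=f1(b) K}, conjugating by $a_2^kb_2^l$ and applying the change-of-basepoint $T$, produces a second expression for $(f_2)_\#$. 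Comparing the $b_2$--exponents of the two expressions for $(f_2)_\#(b)$ forces $n_2=n_1=-2s_1$ (whence $s_2-s_1$ is odd), and comparing the $a_2$--exponents of $(f_2)_\#(a)$ and $(f_2)_\#(b)$ yields Equations~\eqref{eqB1 m1} and \eqref{eqB1 m2}.

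I expect the main obstacle to be the $a_2$--exponent bookkeeping leading to Equation~\eqref{eqB1 m2}. Since $b$ maps to a product $\hat\alpha\hat\beta$ of two non-pure braids, the reduction produces an extra $\sigma^2$ and conjugates the whole word $w_2$, so that the contributions of $m_1$, $m_2$, $r_1$, $r_2$ and the basepoint data $k,l$ mix under the $\pi_1(\mathbb K)$--relation; moreover, as warned in Remark~\ref{elements in B2(K)}\eqref{pr}, $m_i$ is not the plain sum of the $a_2$--exponents of $w_i$, so these exponents must be extracted in the quotient where $pr$ is a homomorphism. This makes the derivation of \eqref{eqB1 m2} the most delicate and error-prone step, whereas \eqref{eqB1 m1} follows from the cleaner $a$--image comparison.
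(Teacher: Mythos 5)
Your proposal is correct and follows essentially the same route as the paper's proof: compute $(f_i)_\#=(pr_i)_\#\circ\Theta_P\circ(\hat\Phi_{B_1})_\#$ on $a$ and $b$ (the paper does this ``analogously to Proposition~\ref{PhiA=>f1,f2}'', which is exactly your merging-the-$\sigma$'s computation, including the collapse of the conjugated word $w_2(\sigma^{-1}a_2\sigma,\sigma^{-1}b_2\sigma)$ under the projections), then invoke Theorem~\ref{Phi map=>-f_1 BU} with Lemma~\ref{K->K: not BU} to force $n_1=-2s_1$ and the parity condition, and finally compare the direct expression for $(f_2)_\#$ with the one obtained from $f_2=f_1\circ\delta_B$ via Equations~\eqref{eq:f2(a)=f1(a-1) K} and \eqref{eq:f2(b)=f1(b) K} to get $n_1=n_2$, the oddness of $s_2-s_1$, and Equations~\eqref{eqB1 m1}--\eqref{eqB1 m2}. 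Your observation that $(\hat\Phi_{B_1})_\#(a)$ coincides with the type-$(B_0)$ braid, so the $a$-computations can be reused, is a harmless shortcut consistent with the paper's argument.
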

		\begin{proof}
			Analogously to the proof of Proposition \ref{PhiA=>f1,f2}, we come to the conclusion that
			\begin{multicols}{2}
				\noindent
				\begin{align*}
					(f_1)_\#:
					a &\longmapsto a_1^{(-1)^{s_1}m_1+(1+(-1)^{s_1+n_1})r_1}b_2^{2s_1+n_1}\\
					b &\longmapsto a_1^{(-1)^{s_1}m_2+r_1+(-1)^{s_1+n_2}r_2} b_1^{s_1+s_2+n_2}
				\end{align*}
				
				\noindent
				\begin{align}
					(f_2)_\#:
					a &\longmapsto a_2^{m_1+((-1)^{n_1}+ (-1)^{s_1+n_1})r_1}b_2^{2s_1+n_1}\nonumber\\
					b &\longmapsto a_2^{m_1+(-1)^{n_1}r_1+(-1)^{s_1+n_1}r_2}b_2^{s_1+s_2+n_1}.\label{K->K:B1 f2}
				\end{align}
			\end{multicols}
			\noindent
			Using Theorem \ref{Phi map=>-f_1 BU} we know that $[f_1]$ does not have the Borsuk-Ulam Property in respect to $\delta_B$ and it follows that $n_1=-2s_1$ and $s_1+s_2+n_2$ is odd.
			Theorem \ref{Phi map=>-f_1 BU} also states that $f_2=f_1\circ\delta_B$ and using Equations \eqref{eq:f2(a)=f1(a-1) K} and \eqref{eq:f2(b)=f1(b) K} it follows that 
			\begin{align*}
				(f_2)_\#(a)&=a_2^{({-1})^{s_1+l}m_1+(-1)^{l}(1+(-1)^{s_1})r_1}, \\
				(f_2)_\#(b)	&=a_2^{(-1)^{s_1+l}(m_2-m_1-(-1)^{n_1})r_2-r_1)+2k} b_2^{s_1+s_2+n_2}.
			\end{align*}
			But $(f_2)_\#(a)$ and $(f_2)_\#(b)$ are already given in Equation \eqref{K->K:B1 f2}.
			Comparing these two identities gives us
			that $n_1=n_2$, i.e. equal to $-2s_1$ and therefore  $s_2-s_1$ is odd.
			Furthermore Equation \eqref{eqB1 m1} and \eqref{eqB1 m2} hold.
		\end{proof}

		The Nielsen Coincidence number of self-maps on the Klein bottle has been computed and is given by the formula in the following proposition.
		\begin{prop}\cite{dobrenko1993coincidence}\label{cap-multimap:q2K->K:N(phi)(prop)}
			Let two maps $f_1,f_2:\mathbb K\rightarrow \mathbb K$ such that the induced homomorphism $(f_i)_\#$ takes $\alpha$ to $\alpha^{r_i}$
			and $\beta$ to $\alpha^{s_i}\beta^{t_i}.$
			Then the Nielsen Coincidence number is given by
			\begin{equation*}
				N(f_1,f_2)=|t_1-t_2|\text{ max }\{|r_1|,|r_2|\}.
			\end{equation*}
		\end{prop}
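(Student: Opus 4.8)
\section*{Proof proposal}

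The plan is to use that $\mathbb{K}$ is a $K(\pi,1)$, so that each $f_i$ is determined up to homotopy by $(f_i)_\#$ and may be replaced by a convenient affine representative on the flat model $\mathbb{K}=\mathbb{R}^2/\Gamma$, where $\Gamma=\langle\alpha,\beta\rangle$ acts by $\alpha(x,y)=(x+1,y)$ and $\beta(x,y)=(-x,y+1)$; one checks $\beta\alpha\beta^{-1}=\alpha^{-1}$, matching the relation $\alpha\beta\alpha=\beta$. First I would record that for $(f_i)_\#$ to be well defined one must have $(-1)^{t_i}r_i=-r_i$ (apply it to $\beta\alpha\beta^{-1}=\alpha^{-1}$), i.e.\ $t_i$ is odd or $r_i=0$. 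Under this constraint $f_i$ lifts to the equivariant affine map $\tilde f_i(x,y)=(r_ix+\tfrac{s_i}{2},\,t_iy)$ when $t_i$ is odd, and $\tilde f_i(x,y)=(s_iy,\,t_iy)$ when $r_i=0$. I would then recall the standard description of the coincidence Reidemeister classes as the orbits of $\Gamma$ acting on itself by $\gamma\mapsto(f_1)_\#(\delta)\,\gamma\,(f_2)_\#(\delta)^{-1}$, a class being nonempty exactly when $\tilde f_1=\gamma\tilde f_2$ is solvable, with $N(f_1,f_2)$ equal to the number of essential classes counted by the geometric coincidence semi-index of Dobre\'nko--Jezierski (the semi-index replacing the ordinary index because $\mathbb{K}$ is non-orientable).

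The second step is to split along the projection $p_*:\Gamma\to\mathbb{Z}$, $\alpha\mapsto0$, $\beta\mapsto1$, coming from the fibration $S^1\to\mathbb{K}\to S^1$. Writing $\gamma=\alpha^a\beta^b$ and using $\gamma\tilde f_2(x,y)=\bigl((-1)^b(r_2x+\tfrac{s_2}{2})+a,\,t_2y+b\bigr)$, the equation $\tilde f_1=\gamma\tilde f_2$ gives from the second coordinate $y=b/(t_1-t_2)$ and from the first the fiber equation $(r_1-(-1)^b r_2)\,x=a+(-1)^b\tfrac{s_2}{2}-\tfrac{s_1}{2}$. Applying $p_*$ to the Reidemeister action shows the base exponent $b$ changes exactly by multiples of $t_1-t_2$, so $b\bmod(t_1-t_2)$ is a complete invariant of the base component and produces precisely $|t_1-t_2|$ base-level classes; this is the first factor. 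I would note that when $t_1-t_2$ is even (both $t_i$ odd) each residue has a fixed parity, and when $t_1-t_2$ is odd the forced vanishing $r_i=0$ makes the coefficient $(-1)^b r_2$ irrelevant, so $|r_1-(-1)^b r_2|$ is well defined on residues in all admissible cases.

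The third step, and the crux, is the fiber count within a fixed residue. Conjugating by $\alpha^c$ (the part of $\Gamma$ fixing $b$, since $t_1\neq t_2$) sends $a\mapsto a+(r_1-(-1)^b r_2)c$, so at base residue $b$ there are exactly $|r_1-(-1)^b r_2|$ distinct nonempty Reidemeister classes, each carrying a single transverse coincidence point of semi-index $1$; hence that residue contributes $|r_1-(-1)^b r_2|$ and $N(f_1,f_2)=\sum_{b\bmod(t_1-t_2)}|r_1-(-1)^b r_2|$. To finish I would evaluate this sum: if both $t_i$ are odd then $t_1-t_2$ is even, the residues split evenly between even and odd $b$, and the identity $|r_1-r_2|+|r_1+r_2|=2\max\{|r_1|,|r_2|\}$ collapses the sum to $|t_1-t_2|\max\{|r_1|,|r_2|\}$; if some $r_i=0$ (the only way an even $t_i$ is allowed) every term equals $\max\{|r_1|,|r_2|\}$ and the sum is again $|t_1-t_2|\max\{|r_1|,|r_2|\}$. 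Throughout, the Wecken property of surfaces guarantees that this minimal geometric count equals $N(f_1,f_2)$.

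The main obstacle is exactly this third step. Since $\alpha$ is $2$-torsion in $H_1(\mathbb{K})=\mathbb{Z}\oplus\mathbb{Z}/2$, the homological (Lefschetz) data cannot detect the fiber degrees $r_i$, so there is no way around the genuinely geometric semi-index computation on the non-orientable base. The delicate points are verifying that each local fiber coincidence is essential with semi-index $1$ (no self-cancellation within a class, which the semi-index would otherwise permit) and that distinct base residues yield genuinely distinct, non-cancelling classes; the orientation-reversing monodromy enters only through the sign $(-1)^b$ in the fiber coefficient, and it is precisely the pairing of even and odd residues that converts the naively expected difference $|r_1-r_2|$ into the maximum $\max\{|r_1|,|r_2|\}$.
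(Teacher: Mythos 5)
The paper offers no proof of this proposition at all: it is quoted verbatim from Dobre\'nko--Jezierski \cite{dobrenko1993coincidence}. So your attempt is measured against the literature rather than against an argument in the text. Your strategy (equivariant affine representatives on the flat Klein bottle, Reidemeister-class bookkeeping via the projection $\alpha\mapsto 0$, $\beta\mapsto 1$, then a semi-index count) is the natural one, and the generic part of the computation is correct: the existence constraint ($t_i$ odd or $r_i=0$), the equivariant lifts, the identification of base-level data with $b\bmod(t_1-t_2)$, the fiber action $a\mapsto a+(r_1-(-1)^br_2)c$, and the identity $|r_1-r_2|+|r_1+r_2|=2\max\{|r_1|,|r_2|\}$ that produces the maximum.

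The genuine gap is in your third step, precisely at the degenerate residues. When $r_1-(-1)^br_2=0$ --- which happens for half of the residues whenever $r_1=\pm r_2\neq 0$, i.e. whenever $|r_1|=|r_2|$, a case that actually occurs in the paper's own application (Proposition \ref{Nielsennumber B}, where the covering map contributes $r_2=2$) --- your assertion that ``there are exactly $|r_1-(-1)^br_2|$ distinct nonempty Reidemeister classes, each carrying a single transverse coincidence point of semi-index $1$'' fails: the $\alpha^c$-action on $a$ is then trivial, so that residue carries infinitely many Reidemeister classes, and (when $s_1-(-1)^bs_2$ is even) one of them is nonempty with coincidence set an entire fiber circle. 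The formula demands that this class have semi-index $0$, and that is exactly where the non-orientable machinery must do real work; your write-up asserts the needed conclusion instead of proving it. It can be repaired: perturb the first coordinate of $\tilde f_2$ by $\epsilon\sin(2\pi x)$ --- unlike a constant translation in $x$, this is equivariant, because $\sin$ is odd and $1$-periodic --- so the circle class becomes two transverse points of opposite local index joined by a Nielsen path running along the fiber circle; that loop direction is orientation-preserving, so the two points cancel in the semi-index and the class is inessential. The case $t_1=t_2$ (where your division $y=b/(t_1-t_2)$ is meaningless) also needs separate treatment, e.g. the equivariant perturbation $(x,y)\mapsto(r_2x+\tfrac{s_2}{2},t_2y+\epsilon)$, which removes all coincidences and gives $N=0$ directly. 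Two smaller defects: the representative $(s_iy,t_iy)$ is equivariant only when $t_i$ is even (for $r_i=0$ with $t_i$ odd use the other formula with $s_i=0$ adjusted), and the appeal to the Wecken property is misplaced --- homotopy invariance of the Nielsen number is all you need --- while the ``delicate points'' you flag at the end (self-cancellation of a single transverse point, cancellation between distinct residues) cannot occur by definition of the semi-index; the delicacy sits entirely in the degenerate residues you passed over.
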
		
		In the next proposition we compute the Nielsen number of a map of type $(B)$.
		\begin{prop}\label{Nielsennumber B}
			Let $\phi_B:\mathbb K\multimap \mathbb K$ be of type $(B)$, where $(\Phi_B)_\#$ takes $\alpha$ to $w_1(a_2,b_2)a_1^{r_1}b_1^{s_1}\sigma$ and $\beta$ to $w_2(a_2,b_2)a_1^{r_2}b_1^{s_2}\sigma^{k_2}$,
			with $r_1,r_2,s_1,s_2\in \mathbb N$, $k_2\in \{0,1\}$ and $pr(w_i)=a_2^{m_i}b_2^{n_i}$, where $pr$ is as in Remark \ref{elements in B2(K)}\eqref{pr}.
			Then
			\begin{align*}
				N(\phi)=
				\begin{cases}
					|1-s_2|\cdot max\{|(1+(-1)^{ s_1}) r_1+m_1|,2\}& \text{ if } k_2=0, \\  
					|1-s_2+s_1|\cdot max\{|(1+(-1)^{ s_1}) r_1+m_1|,2\}& \text{ if } k_2=1.
				\end{cases}
			\end{align*}
		\end{prop}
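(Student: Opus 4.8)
The plan is to compute $N(\phi)$ by reducing it to a Nielsen coincidence number of two self-maps of the Klein bottle and then invoking the closed formula of Proposition~\ref{cap-multimap:q2K->K:N(phi)(prop)}. By Theorem~\ref{N(phi)=N(q,f1)} we have $N(\phi)=N(q_B,f_1)$, where $q_B\colon\tilde{\mathbb K}\to\mathbb K$ is the induced double covering and $f_1$ the first lift factor. Since the covering space $\tilde{\mathbb K}$ is itself a Klein bottle in the type $(B)$ situation, both $q_B$ and $f_1$ are maps between Klein bottles, so after identifying $\pi_1(\tilde{\mathbb K})=\langle a,b\rangle$ with the standard Klein-bottle presentation $\langle\alpha,\beta\rangle$ used in Proposition~\ref{cap-multimap:q2K->K:N(phi)(prop)}, that formula becomes available once both induced homomorphisms are written in the standard form $a\mapsto\alpha^{r}$, $b\mapsto\alpha^{s}\beta^{t}$.

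First I would collect the relevant data in each subcase. For type $(B_0)$, Proposition~\ref{PhiB0=>f1,f2} gives $(f_1)_\#\colon a\mapsto a_1^{(-1)^{s_1}m_1+(1+(-1)^{s_1})r_1}$, $b\mapsto a_1^{r_2}b_1^{s_2}$, while the covering satisfies $(q_{B_0})_\#\colon a\mapsto\alpha^2$, $b\mapsto\beta$. Thus in the notation of Proposition~\ref{cap-multimap:q2K->K:N(phi)(prop)} the exponents of $\alpha$ on the image of $a$ are $2$ and $(-1)^{s_1}m_1+(1+(-1)^{s_1})r_1$, and the exponents of $\beta$ on the image of $b$ are $1$ and $s_2$. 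For type $(B_1)$, Proposition~\ref{PhiB1=>f1,f2} gives $(f_1)_\#\colon a\mapsto a_1^{(1+(-1)^{s_1})r_1+(-1)^{s_1}m_1}$, $b\mapsto a_1^{r_1+(-1)^{s_1}m_2+(-1)^{s_1}r_2}b_1^{s_2-s_1}$, while $(q_{B_1})_\#\colon a\mapsto\alpha^2$, $b\mapsto\alpha\beta$, so the relevant $\beta$-exponents are $1$ and $s_2-s_1$. In both subcases the image of $a$ under each map is a pure power of $\alpha$ and the image of $b$ has the required form, so the hypotheses of Proposition~\ref{cap-multimap:q2K->K:N(phi)(prop)} are satisfied.

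Next I would substitute into $N(\cdot,\cdot)=|t_1-t_2|\max\{|r_1|,|r_2|\}$, obtaining $N(q_{B_0},f_1)=|1-s_2|\max\{2,|(-1)^{s_1}m_1+(1+(-1)^{s_1})r_1|\}$ and $N(q_{B_1},f_1)=|1-s_2+s_1|\max\{2,|(1+(-1)^{s_1})r_1+(-1)^{s_1}m_1|\}$. The last step is a parity simplification of the $\alpha$-exponent: splitting on whether $s_1$ is even or odd, one checks that $(-1)^{s_1}m_1+(1+(-1)^{s_1})r_1$ equals $m_1+2r_1$ for $s_1$ even and $-m_1$ for $s_1$ odd, whereas $(1+(-1)^{s_1})r_1+m_1$ equals $m_1+2r_1$ and $m_1$ respectively. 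Hence the two expressions agree up to sign, giving $|(-1)^{s_1}m_1+(1+(-1)^{s_1})r_1|=|(1+(-1)^{s_1})r_1+m_1|$, and substituting this identity yields exactly the two cases of the stated formula.

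I expect the main obstacle to be bookkeeping rather than anything conceptual: one must correctly read off which exponent plays the role of $r_i$ and which plays the role of $t_i$ in Proposition~\ref{cap-multimap:q2K->K:N(phi)(prop)}, keeping in mind that the $\alpha$-exponent $s_i$ of the image of $b$ does not enter the coincidence number at all. A secondary point requiring care is the legitimacy of applying a self-map coincidence formula to the mixed pair $(q_B,f_1)$; this is justified because $\tilde{\mathbb K}\cong\mathbb K$ and the coincidence number is symmetric in its two arguments, while the reduction $N(\phi)=N(q_B,f_1)$ itself rests on Theorem~\ref{N(phi)=N(q,f1)}.
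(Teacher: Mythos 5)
Your proposal is correct and follows essentially the same route as the paper: reduce to $N(q_B,f_1)$ via Theorem~\ref{N(phi)=N(q,f1)}, read off the induced homomorphisms from Propositions~\ref{PhiB0=>f1,f2} and \ref{PhiB1=>f1,f2} together with $(q_{B_0})_\#$, $(q_{B_1})_\#$, apply Proposition~\ref{cap-multimap:q2K->K:N(phi)(prop)}, and simplify the $\alpha$-exponent. The only cosmetic difference is that the paper handles the final simplification by factoring out $(-1)^{s_1}$ from $(1+(-1)^{s_1})r_1+(-1)^{s_1}m_1$ in one step, whereas you verify the same identity by splitting into the cases $s_1$ even and $s_1$ odd.
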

		\begin{proof}
			From Proposition \ref{N(phi)=N(q,f1)} it follows that $N(\phi_B)=N(f_1,q_B)=N(f_2,q_B)$, where $f_1,f_2$ are the lift factors and $q_B:\tilde{\mathbb K}\rightarrow \mathbb K$ the covering map.
			We will prove it in two cases, depending whether $k_2=0$ or $k_2=1$.
h			
			If $k_2=0$, then our map is of type $(B_0)$ and the induced homomorphism of the covering map $q_{B_0}:\tilde{\mathbb K}\rightarrow \mathbb K$ is given by the homomorphism takes $a $ to $\alpha^2$ and $b$ to $\beta$.
			Propoosition \ref{PhiB0=>f1,f2} implies that
			$ f_1:\tilde{\mathbb K}\rightarrow \mathbb K$ is such that $(f_1)_\#$ takes $a$ to $a_1^{(1+(-1)^{s_1})r_1+(-1)^{s_1}m_1}$ and
			$b$ to $a_1^{r_2}b_1^{s_2}.$
			By Proposition \ref{cap-multimap:q2K->K:N(phi)(prop)}
			the Nielsen coincidence number is given by
			\begin{align*}
				N(f_1,q_{B_0})&=|1-s_2|\text{ max }\{|(1+(-1)^{s_1})r_1+(-1)^{s_1}m_1|,|2|\}\\
				&=|1-s_2|\text{ max }\{|(-1)^{s_1}|\cdot|((-1)^{s_1}+1)r_1+m_1|,2\}\\
				&=|1-s_2|\text{ max }\{|(1+(-1)^{s_1})r_1+m_1|,2\}.
			\end{align*}
			
			Analogously, if $k_2=1$ the map is of type $(B_1)$ and $q_{B_1}:\tilde{\mathbb K}\rightarrow \mathbb K$ satisfies $(q_{B_1})_\#(a)=\alpha^2$ and $(q_{B_1})_\#(b)=\alpha \beta$ and for the lift factor $f_1$ it holds by Propoosition \ref{PhiB1=>f1,f2} that $(f_1)_\#$ takes $a$ to $a_1^{(1+(-1)^{s_1}) r_1+(-1)^{s_1} m_1}$ and $b$ to $a_1^{r_1+(-1)^{ s_1} m_2+(-1)^{ s_1} r_2}b_1^{s_2-s_1}$.
			Therefore 
			\begin{align*}
				N(f_1,q_{B_1})&=|1-s_2+s_1|\text{ max }\{|(1+(-1)^{s_1})r_1+m_1|,2\}.
			\end{align*}
		\end{proof}
		
		With this last part we conclude the computation of the Nielsen number of $2$-valued non-split maps on the Klein bottle.
		\begin{theo}\label{Nielsennumber non-split}
			Let $\phi:\mathbb K\multimap \mathbb K$ be a $2$-valued non-split map on the Klein bottle $\mathbb K$, where $\pi_1(\mathbb K)=\langle \alpha,\beta:\alpha\beta\alpha=\beta\rangle$ and let $\Phi:\mathbb K\rightarrow D_2(\mathbb K)$ be the correspondent map.
			Let the induced homomorphism be written as
			\begin{align*}
				\Phi_\#: 
				\alpha &\longmapsto w_1(a_2,b_2)a_1^{r_1}b_1^{s_1}\sigma^{k_1}\\
				\beta &\longmapsto w_2(a_2,b_2)a_1^{r_2}b_1^{s_2}\sigma^{k_2},\nonumber
			\end{align*}
			where 
			$k_1,k_2\in \{0,1\}$, $r_1,r_2,s_1,s_2\in \mathbb N$ and $pr(w_i)=a_2^{m_i}b_2^{n_i}$, for $i=0,1$, where $pr:F_2(a_2,b_2)\rightarrow  F_2(a_2,b_2)/\langle a_2b_2a_2b_2^{-1}\rangle$ is the projection as in Remark \ref{elements in B2(K)}\itshape{\ref{pr})}.
			Then the Nielsen fixed point number of $\phi$ is given as
			\begin{align*}
				N(\phi)=
				\begin{cases}
					|1-s_2|\cdot max\{|(1+(-1)^{ s_1}) r_1+m_1|,2\}& \text{ if } k_1=1, k_2=0, \\  
					|1-s_2+s_1|\cdot max\{|(1+(-1)^{ s_1}) r_1+m_1|,2\}& \text{ if } k_1=1, k_2=1, \\
					|r_1|\cdot|2s_2+n_2-2|& \text{ if } k_1=0, s_1,n_2 \text{ even and } r_1\neq 0, \\
					|2s_2+n_2-2| & \text{ otherwise.}
				\end{cases}
			\end{align*}
		\end{theo}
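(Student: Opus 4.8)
The plan is to recognize that this theorem merely consolidates the two propositions proved in the preceding subsections, together with the structural dichotomy of non-split maps established at the start of Section~\ref{sec:Nielsennumber}. The entire argument is a case analysis over the pair $(k_1,k_2)$, and all the genuine work --- computing the lift factors, verifying the Borsuk--Ulam obstruction, and invoking the relevant Nielsen coincidence number formulas --- has already been carried out in Propositions~\ref{Nielsennumber A} and~\ref{Nielsennumber B} (which in turn rest on Theorem~\ref{N(phi)=N(q,f1)}).

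First I would recall that, since $\phi$ is non-split, $\Phi_\#(\pi_1(\mathbb K))\not\subseteq P_2(\mathbb K)$, so at least one generator is sent to a non-pure braid. Hence $k_1$ and $k_2$ cannot both vanish, leaving exactly three possibilities for $(k_1,k_2)$, namely $(0,1)$, $(1,0)$ and $(1,1)$.

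Next I would dispatch each possibility separately. If $k_1=0$, then necessarily $k_2=1$ and $\phi$ is of type $(A)$ in the sense of Equation~\eqref{Phi non-split type A}; applying Proposition~\ref{Nielsennumber A} yields $N(\phi)=|r_1|\cdot|2s_2+n_2-2|$ when $s_1$ and $n_2$ are even and $r_1\neq 0$, and $N(\phi)=|2s_2+n_2-2|$ otherwise, which are precisely the third and fourth lines of the claimed formula. If instead $k_1=1$, then $\phi$ is of type $(B)$ irrespective of the value $k_2\in\{0,1\}$, and Proposition~\ref{Nielsennumber B} gives $N(\phi)=|1-s_2|\cdot\max\{|(1+(-1)^{s_1})r_1+m_1|,2\}$ for $k_2=0$ and $N(\phi)=|1-s_2+s_1|\cdot\max\{|(1+(-1)^{s_1})r_1+m_1|,2\}$ for $k_2=1$, matching the first and second lines.

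Since these three admissible cases exhaust all $2$-valued non-split maps and each is covered by exactly one line of the piecewise formula, assembling them completes the proof. I do not expect a genuine obstacle at this stage: the only point requiring attention is bookkeeping, namely verifying that the naming conventions for $k_1,k_2,r_i,s_i,m_i,n_i$ and the underlying normal form of Remark~\ref{elements in B2(K)} agree verbatim between the theorem statement and the two propositions, so that the four branches of $N(\phi)$ line up correctly with the type-$(A)$, $(B_0)$ and $(B_1)$ sub-cases.
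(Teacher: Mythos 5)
Your proposal is correct and matches the paper's own proof, which likewise observes that every $2$-valued non-split map is of type $(A)$ or type $(B)$ and then simply combines Propositions~\ref{Nielsennumber A} and~\ref{Nielsennumber B}. Your version is slightly more explicit about the exhaustive case analysis on $(k_1,k_2)$ and the notational bookkeeping, but the argument is the same.
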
		
		\begin{proof}
			Every $2$-valued non-split is either of type $(A)$ or $(B)$.
			Combining Propositions \ref{Nielsennumber A} and \ref{Nielsennumber B} gives us the result.
		\end{proof}
		

	\section{Final comments}		
		
		Note that in Theorem \ref{Nielsennumber non-split} we already assumed that $\Phi:\mathbb K\rightarrow D_2(\mathbb K)$ is a map, where we write $\Phi(\alpha)$ and $\Phi(\beta)$ in the normal form using the variables $w_1(a_2,b_2),w_2(a_2,b_2)\in F_2(a_2,b_2)$, $r_1,s_1,r_2,s_2\in \mathbb Z$ and $k_1,k_2\in \{0,1\}$.
		We find some restriction for these variables when we analyze the lift factors in Propositions \ref{PhiA=>f1,f2}, \ref{PhiB0=>f1,f2} and \ref{PhiB1=>f1,f2}.
		A project that is still open is to classify these maps, i.e. $2$-valued non-split maps on the Klein-bottle, for example in terms of the variables above.
		But since this is a big question, we can make some smaller steps.		
		We know that every $2$-valued non-split self-map on the Klein bottle induces a pair of lift-factors (see paragraphs just before Theorem \ref{fsigmak=fdeltak}), which are either maps from the Torus to the Klein bottle (in case of maps of type $(A)$) or self-maps on the Klein bottle (in case of maps of type $(B)$).
		With this correspondence in mind, we can define maps $\Omega_A$ (respectively $\Omega_B$) that go from the based homotopy classes $[\mathbb K, D_2(\mathbb K)]_0$ to $[\tilde {\mathbb T}, \mathbb K]_0\times[\tilde {\mathbb T}, \mathbb K]_0$ (respectively to $[\tilde {\mathbb K}, \mathbb K]_0\times[\tilde {\mathbb K}, \mathbb K]_0$).
		From Remark \ref{fBU->2valued non split map} it follows that the image of $\Omega_A$ are all elements in the form $([f],[f\circ \delta_A])$, where $[f]$ is a class that satisfies the conditions of Proposition \ref{not BU T->K}, i.e. does not satisfy the Borsuk-Ulam Property in respect to $\delta_A$.
		Analogously the image of $\Omega_B$ are all $([f],[f\circ \delta_B])$ such that $[f]$ satisfies the conditions of Proposition \ref{K->K: not BU}.
		One first step to classify $2$-valued non-split maps on the Klein-bottle could be to fix a pair $([f],[f\circ\delta_A])$ (respectively $([f],[f\circ\delta_B])$), where $[f]$ does not satisfy the Borsuk-Ulam Property in respect to $\delta_A$ (respectively $\delta_B$), which are given in
		Proposition \ref{not BU T->K} (respectively Proposition \ref{K->K: not BU}),
		and classify all $2$-valued non-split maps that are in the pre-image of this pair.
		There are examples of $2$-valued non-split maps on the Klein-bottle on the pre-image of each such a pair $([f],[f\circ\delta_A])$ (respectively $([f],[f\circ\delta_B])$), which we explicit in the next remark for every pair $([f],[f\circ\delta_B])\in [\tilde {\mathbb K}, \mathbb K]_0\times[\tilde {\mathbb K}, \mathbb K]_0$, where for $[f]$ the Borsuk-Ulam Property does not hold in respect to $\delta_B$.		
		\begin{rem}\label{preimage OmegaB}
			Let $f:\mathbb K\rightarrow \mathbb K$ be a map such that its induced homomorphism $(f)_\#$ takes $a$ to $a_1^{2x}$ and $b$ to $a_1^{y}b_1^{z}$, where $x,y,z\in \mathbb Z$ and $z$ is odd.
			Consider the map $\Phi:\mathbb K\rightarrow B_2(\mathbb K)$ of type $(B)$ which induced homomorphism is given as follows
			\begin{align*}
				(\Phi)_\#:
				\alpha &\longmapsto  \hat \alpha=(b_2^{-1}a_2b_2a_2) a_1^{x}\sigma\\
				\beta &\longmapsto \hat \beta= (b_2^{-1}a_2b_2a_2)^l a_1^{y}b_1^{z},
			\end{align*}
			where $l\in \mathbb Z$.
			Using the relations of $B_2(\mathbb K)$ given in Proposition \ref{cap-braids:Presentation B_2(minha)(theo)} and Corollary \ref{relations sigma^2} it is straightforward to check that $\hat \alpha \hat \beta \hat \alpha= \hat \beta$ and therefore $\Phi$ is in fact a map.
			Note that $pr(b_2^{-1}a_2b_2a_2)=1$, where $pr$ is as in Remark \ref{elements in B2(K)}\eqref{pr}, therefore $m_1=m_2=n_1=n_2=0$ and
			Proposition \ref{PhiB0=>f1,f2} implies that $f_1$ is homotopic to $f$.
			From Theorem \ref{Phi map=>-f_1 BU} it follows that $f_2=f_1\circ \delta_B$, hence $\Omega_B$ maps $[\Phi]$ to $([f],[f\circ \delta_B])$.
			
			To complete the examples for all classes of maps which do not satisfy the Borsuk-Ulam Property in respect to $\delta_B$ (see Lemma \ref{K->K: not BU}), let
			$f:\mathbb K\rightarrow \mathbb K$ be a map such that its induced homomorphism $(f)_\#$ takes $a$ to $a_1^{2x+1}$ and $b$ to $a_1^{y}b_1^{z}$, where $x,y,z\in \mathbb Z$ and $z$ is odd.
			Then the $2$-valued non-split map, given in the proof of \cite[Proposition 32]{gonccalves2019borsuk}, with induced homomorphism
			\begin{align*}
				(\Phi)_\#:
				\alpha &\longmapsto  a_2^{-1}a_1^{x+1}\sigma\\
				\beta &\longmapsto a_2^{-1} a_1^{y}b_1^{z},
			\end{align*}			
			is such that for $\hat \Phi=(f_1,f_2)$, the lift of $\Phi\circ q$, it holds that $f$ is homotopic to $f_1$.
			Hence we have an example in the pre-image of $\Omega_B$ for each pair $([f],[f\circ \delta_B])\in [\tilde{\mathbb K},\mathbb K]_0\times[\tilde{\mathbb K},\mathbb K]_0$ such that $[f]$ does not satisfy the Borsuk Ulam Property in respect to $\delta_B$.
		\end{rem}
		For $\Omega_A$ there are also examples of $2$-valued non-split maps in the pre-image of each pair $([f],[f\circ \delta_A])\in [\tilde{\mathbb T},\mathbb K]_0\times[\tilde{\mathbb T},\mathbb K]_0$ such that $[f]$ does not satisfy the Borsuk Ulam Property in respect to $\delta_A$.
		These examples can be found in the proofs of Proposition 3.2. and 3.5. in \cite{gonccalves2022borsuk}, where the isomorphism between the presentation of $B_2(\mathbb K)$ used in \cite{gonccalves2022borsuk} (and \cite{gonccalves2019borsuk} for Remark \ref{preimage OmegaB}) and the one used in this paper (see Proposition \ref{cap-braids:Presentation B_2(minha)(theo)}) is given by
		\begin{multicols}{3}
			\noindent
			\begin{align*}
				(u;0, 0)&\mapsto \sigma ^{-2}a_2,\\
				(\vmathbb 1;1,0)&\mapsto \sigma^{-2}a_1,
			\end{align*}	
			
			\noindent
			\begin{align*}
				(v;0,0)&\mapsto b_2\sigma^2,\\
				(\vmathbb{1};0,1)&\mapsto b_1,
			\end{align*}	
			
			\noindent
			\begin{align*}	
				(B;0,0)&\mapsto \sigma^{-2},\\
				\sigma&\mapsto \sigma.
			\end{align*}
		\end{multicols}
		\noindent
		It is left to the reader to show that this is in fact an isomorphism.
		
		To have a classification of $2$-valued non-split maps on the Klein bottle could also help to study the Wecken property for $2$-valued maps (on the Klein bottle).
		Recall that a space satisfies the Wecken Property if for every self-map $f$ there is a map $\tilde f$ homotopic to $f$ with $N(f)$ fixed points.
		Analogously the space satisfies the Wecken Property for $2$-valued if we exchange the single valued map $f$ with a $2$-valued self-map $\phi$.
		For the Wecken Property for $2$-valued maps to hold, every map with Nielsen zero has to be homotopic to a fixed point free map.
		Taking Theorem \ref{Nielsennumber non-split} in account is easy to see that a $2$-valued non-split self-map on the Klein bottle has Nielsen number equal to zero if and only if one of the following is satisfied;
		\begin{itemize}
			\item $k_1=1, k_2=0$ and $s_2=1$,
			\item $k_1=1, k_2=1$ and $s_2-s_1=1$,
			\item $k_1=0, k_2=1$ and $n_2=2(1-s_2)$.
		\end{itemize}
		For the author it is currently unknown if a map with these conditions is homotopic to fixed point-free map.
		
		Another problem arising in this discussion, which is a current work in progress, is how to generalize Theorems \ref{Phi map=>-f_1 BU} and \ref{N(phi)=N(q,f1)} for $n\geq 2$, i.e. to study $n$-valued non-split maps, looking into the connection to the Borsuk-Ulam Property of their lift factors and searching for a method to compute the Nielsen fixed point number of $n$-valued non-split maps on orientable and non-orientable manifolds.


\bibliographystyle{plain} 
\bibliography{biblio} 

\end{document}